\def\newaliasedtheorem#1[#2]#3{
  \newaliascnt{#1@alt}{#2}
  \newtheorem{#1}[#1@alt]{#3}
  \expandafter\newcommand\csname #1@altname\endcsname{#3}
}
\numberwithin{equation}{section}
\newtheoremstyle{slanted}{\topsep}{\topsep}{\slshape}{}{\bfseries}{.}{.5em}{}
\theoremstyle{plain}
\newtheorem{theorem}{Theorem}[section]
\theoremstyle{definition}
\theoremstyle{remark}
\newcommand{\setR}{\mathbb{R}}
\newcommand{\eps}{\varepsilon}
\let\altphi\phi
\let\phi\varphi
\let\varphi\altphi
\let\altphi\undefined
\newcommand{\di}{\mathop{}\!\mathrm{d}}
\DeclareMathOperator{\Lip}{Lip}
\newcommand{\dist}{\mathsf{d}}
\newcommand{\meas}{\mathfrak{m}}
\DeclareMathOperator{\RCD}{RCD}
\DeclareMathOperator{\CD}{CD}
\newfont{\tmpf}{cmsy10 scaled 2500}
\newcommand{\m}{\mathfrak{m}}
\renewcommand{\H}{\mathcal{H}}
\newcommand{\intav}{{\mathop{\int\kern-10pt\rotatebox{0}{\textbf{--}}}}}
\renewcommand{\d}{\mathrm{d}}
\renewcommand{\ }{\text{ }}
\def\<{\langle}
\def\>{\rangle}
\newcommand{\ra}{\rightarrow}
\newcommand{\R}{\setR}
\def\Sum#1#2{\mathop{\sum}\limits_{#1}^{#2}}
\def\Prod#1#2{\mathop{\prod}\limits_{#1}^{#2}}
\def\esssup#1{\mathop{\mathrm{ess\text{ }sup}}\limits_{#1}}
\def\essinf#1{\mathop{\mathrm{ess\text{ }inf}}\limits_{#1}}
\newcommand{\Test}{\mathrm{Test}}
\newcommand{\rad}{\mathop{\mathrm{rad}}}
\newcommand{\diam}{\mathop{\mathrm{diam}}}
\def\Sup#1{\mathop{\sup}\limits_{#1}}
\def\Inf#1{\mathop{\inf}\limits_{#1}}
\newcommand{\leqae}{\mathop{\leqslant}\limits^{\text{a.e.}}}
\newcommand{\Hess}{{\textrm{Hess}}}
\begin{document}
\title{A note on the topological stability theorem from $\RCD$ spaces to Riemannian manifolds}
\author{
Shouhei Honda,
\thanks{Mathematical Institute, Tohoku University, \url{shouhei.honda.e4@tohoku.ac.jp}} \and 
Yuanlin Peng,
\thanks{Mathematical Institute, Tohoku University, \url{peng.yuanlin.p6@dc.tohoku.ac.jp }} } 
\maketitle
\begin{abstract}
Inspired by a recent work of Wang-Zhao in \cite{WangZhao}, in this note we prove that for a fixed $n$-dimensional closed Riemannian manifold $(M^n, g)$, if an $\RCD(K, n)$ space $(X, \dist, \meas)$ is Gromov-Hausdorff close to $M^n$, then there exists a regular homeomorphism $F$ from $X$ to $M^n$ such that $F$ is Lipschitz continuous and that $F^{-1}$ is H\"older continuous, where the Lipschitz constant of $F$, the H\"older exponent and the H\"older constant of $F^{-1}$ can be chosen arbitrary close to $1$. This is sharp in the sense that in general such a map cannot be improved to being bi-Lipschitz. Moreover if $X$ is smooth, then such a homeomorphism can be chosen as a diffeomorphism. It is worth mentioning that the Lipschitz-H\"older continuity of $F$ improves the intrinsic Reifenberg theorem for closed manifolds with Ricci curvature bounded below established by Cheeger-Colding. The Nash embedding theorem plays a key role in the proof.
\end{abstract}

\tableofcontents

\section{Introduction}
\subsection{Topological stability theorem}
Let us consider a Gromov-Hausdorff convergent sequence of compact metric spaces:
\begin{equation}\label{eq:ghxonv}
(X_i, \dist_i) \stackrel{\mathrm{GH}}{\to} (X, \dist).
\end{equation}
A fundamental problem in this setting is to find \textit{nice} (Gromov-Hausdorff approximating) maps $F_i: X_i \to X$, under a suitable restriction of curvature of $X_i$ in a synthetic sense. For example if each $X_i$ is a finite dimensional Alexandrov space with curvature bounded below by $-1$ and the dimensions of $X_i$ and of $X$ coincide, then for any sufficiently large $i$, $F_i$ can be chosen as a homeomorphism, which is proved by Perelman in \cite{Pere} (see also \cite{Kapo}). This result is called a \textit{topological stability theorem}, related to sectional curvature.

Next let us consider the case of Ricci curvature. Then  instead of (\ref{eq:ghxonv}), it is better to consider the following setting:
\begin{equation}\label{eq:mghconv2}
\left(X_i, \dist_i, \meas_i\right) \stackrel{\mathrm{mGH}}{\to} (X, \dist, \meas),
\end{equation}
where $\meas_i, \meas$ are Borel measures on compact spaces $X_i, X$, respectively and the convergence in (\ref{eq:mghconv2}) stands for measured Gromov-Hausdorff convergence. 

For the sequence (\ref{eq:mghconv2}) assume that all $(X_i, \dist_i, \meas_i)$ are $\RCD(K, N)$ spaces for fixed $K \in \mathbb{R}$ and $N \in [1, \infty)$, namely they satisfy:
\begin{itemize}
\item the Ricci curvature are bounded below by $K$, the dimensions are bounded above by $N$ in a synthetic sense, and the $H^{1,2}$-Sobolev spaces are Hilbert.
\end{itemize}
See \cite{A} for a survey on this topic.
Then it is natural to ask whether a similar topological stability theorem as in the case of sectional curvature is satisfied in this setting, namely 
\begin{enumerate}
\item[(Q1)] If the dimensions of $X_i$ and $X$ coincide in  (\ref{eq:mghconv2}), then does there exist a homeomorphism $F_i:X_i \to X$? Here, the dimension stands for the essential dimension proved in \cite{BrueSemola} (after \cite{CN} for Ricci lmit spaces).
\end{enumerate}
Note that the essential dimension is not necessary to coincide with the Hausdorff dimension by \cite{PW} and that the question (Q1) has a positive answer if $N=1, 2$ or if $N=3$ and $X_i$ is smooth, see \cite{LW, ST}.
It is well-known that in general the question (Q1) has a negative answer even if each $(X_i, \dist_i)$ is isometric to a Ricci flat manifold and the essential dimensions are equal to $N=4$ (see for example \cite{KT, Page}). Let us emphasize that the reason is in the existence of singular points of $X$.

Actually the intrinsic Reifenberg theorem proved in \cite{CheegerColding1} with (almost) rigidity results obtained in \cite{DePhillippisGigli} allows us to prove that if $X$ has no singular points and the essential dimension is equal to $N$, then there exists a homeomorphism $F_i:X_i \to X$ for any sufficiently large $i$. Moreover such $F_i$ can be chosen as bi-H\"older homeomorphisms, where the H\"older exponent can be chosen arbitrary close to $1$. See \cite{KM}.

Thus it is also natural to ask;
\begin{enumerate}
\item[(Q2)] If $X$ has no singular points and the essential dimension is equal to $N$, then does there exist \textit{canonical} homeomorphism $F_i:X_i \to X$?
\end{enumerate}
The first positive answer to this question (Q2) is recently given in \cite{WangZhao} who proved that if  $X$ is isometric to the stadard unit $N$-sphere $(\mathbb{S}^N, \dist_{\mathbb{S}^N})$ and each $X_i$ is smooth with positive Ricci curvature, then for any $\epsilon \in (0, 1)$ such $F_i$ can be chosen as normalized eigenmap satisfying that $F_i$ is a $(1+\epsilon)$-Lipschitz diffeomorphism and $F_i^{-1}$ is $(1-\epsilon)$-H\"older continuous with the H\"older constant at most $(1+\epsilon)$.

The main purpose of this note is to generalize the result above in \cite{WangZhao} to the case when $(X, \dist)$ is smooth, not necessarily isometric to a sphere and $X_i$ is not necessarily smooth. In our setting, instead of using eigenmaps, we adopt fixing a smooth isometric embedding $\Phi:X \hookrightarrow \mathbb{R}^k$ whose general existence is guaranteed by the Nash embedding theorem (note that in the case of the unit sphere, the canonical inclusion $\iota:\mathbb{S}^N \hookrightarrow \mathbb{R}^{N+1}$ is an isometric eigenmap). In particular as a corollary of the main results, Theorem \ref{thm:canonicaldiff}, we get the following result;
\begin{theorem}
Let $N \in \mathbb{N}$ and let $(M^N, g)$ be an $N$-dimensional closed Riemannian manifold. For any $K \in \mathbb{R}$ and any $\epsilon \in (0, 1)$ there exists $\delta \in (0, 1)$ such that if a compact $\RCD(K, N)$ space $(X, \dist, \meas)$ is $\delta$-Gromov-Hausdorff close to $(M^N, \dist_g)$, then there exists a homeomorphism $F$ from  $X$ to $M^N$ such that 
\begin{equation}\label{eq:biholderlip44}
(1-\epsilon)\dist(x, y)^{1+\epsilon}\le \dist_g(F(x), F(y))\le (1+\epsilon)\dist (x, y),\quad \forall x, y \in X.
\end{equation}
\end{theorem}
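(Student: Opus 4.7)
The plan is to construct $F$ by pulling back the Nash isometric embedding of $M^N$ along a measured GH approximation, mollifying by the heat semigroup on $X$, projecting into $\Phi(M^N)$, and composing with $\Phi^{-1}$. More concretely, the Nash embedding theorem provides a smooth isometric embedding $\Phi\colon(M^N,g)\hookrightarrow\R^k$; its coordinates $\Phi_1,\dots,\Phi_k\colon M^N\to\R$ are $1$-Lipschitz and satisfy $\sum_{j=1}^k|\nabla\Phi_j|_g^2\equiv 1$. Since $M^N$ is closed, $\Phi(M^N)$ admits a tubular neighborhood $U_r\subset\R^k$ on which the nearest-point projection $\pi\colon U_r\to\Phi(M^N)$ is $(1+\epsilon/4)$-Lipschitz once $r$ is sufficiently small. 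For $\delta$ much smaller than $\epsilon$ and $r$, I would fix a $\delta$-GH approximation $\psi\colon X\to M^N$ and set $\tilde\phi_j=P_t(\Phi_j\circ\psi)$ for a small $t=t(\delta)>0$, where $P_t$ is the heat semigroup on $(X,\dist,\meas)$. Assembling $F_X=(\tilde\phi_1,\dots,\tilde\phi_k)\colon X\to\R^k$ and checking that $F_X(X)\subset U_{r/2}$, I would then define
\[ F=\Phi^{-1}\circ\pi\circ F_X\colon X\to M^N. \]

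For the upper Lipschitz bound, the key pointwise estimate to establish is $\sum_{j=1}^k|\nabla\tilde\phi_j|^2\le 1+\epsilon/2$ $\meas$-a.e.\ on $X$, which gives $\Lip(F_X)\le(1+\epsilon/2)^{1/2}$ after integration along Lipschitz curves. This is the transfer to $X$ of the identity $\sum_j|\nabla\Phi_j|_g^2\equiv 1$ on $M^N$, and I expect it to follow by combining the Bakry-\'Emery gradient estimate for $P_t$ on the $\RCD(K,N)$ space with almost-rigidity of the Cheeger energy along mGH convergence to a smooth manifold, in the spirit of the De Philippis-Gigli rigidity results. Composition with the $(1+\epsilon/4)$-Lipschitz projection $\pi$ and the Riemannian isometric inverse $\Phi^{-1}$ (whose local Lipschitz distortion can be made arbitrarily close to $1$ at small enough Euclidean scales, then globalized using that $F$ stays uniformly close to $\psi$) would then yield $\dist_g(F(x),F(y))\le(1+\epsilon)\dist(x,y)$.

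For the lower H\"older bound and the homeomorphism property, the intrinsic Reifenberg theorem of Cheeger-Colding combined with De Philippis-Gigli almost-rigidity already ensures that $X$ is topologically $M^N$ and admits \emph{some} $(1+\epsilon)$-bi-H\"older homeomorphism $G\colon X\to M^N$. By construction $F$ is itself a $\delta'$-GH approximation with $\delta'=\delta'(\delta,t,r)\to 0$ as $\delta\to 0$, so a standard degree argument (using the already-known topology of $X$) upgrades $F$ to a homeomorphism, and the H\"older lower bound $(1-\epsilon)\dist(x,y)^{1+\epsilon}\le\dist_g(F(x),F(y))$ is then obtained by splitting into small and large scales: at large scales the $\delta'$-GH property yields the near-isometry directly, while at small scales one compares $F$ to the reference bi-H\"older map $G$ using that $\sup_x \dist_g(F(x),G(x))$ is small. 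The main obstacle I expect is the sharp pointwise gradient estimate $\sum_j|\nabla\tilde\phi_j|^2\le 1+\epsilon/2$: this is what distinguishes the present quantitative Lipschitz statement from the softer bi-H\"older output of the intrinsic Reifenberg theorem, and obtaining it requires Hessian-type almost-rigidity adapted specifically to pullbacks of smooth Nash coordinates rather than to harmonic or $\delta$-splitting functions alone.
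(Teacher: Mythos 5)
Your overall architecture — fix a Nash embedding $\Phi\colon M^N\hookrightarrow\mathbb R^k$, build an $\mathbb R^k$-valued map $F_X$ on $X$ approximating $\Phi$, project into $\Phi(M^N)$ by a tubular-neighborhood map $\pi$, and set $F=\Phi^{-1}\circ\pi\circ F_X$ — is exactly the paper's. But the two steps that make the statement quantitative (the sharp $(1+\epsilon)$-Lipschitz upper bound and the $(1-\epsilon)\dist^{1+\epsilon}$ lower bound) are where your substitutes have genuine gaps.

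\textbf{Construction of $F_X$.} You define $\tilde\phi_j=P_t(\Phi_j\circ\psi)$ for a $\delta$-GH approximation $\psi$. A GH approximation has no regularity: $\Phi_j\circ\psi$ is a bounded Borel function, not in $H^{1,2}$, so the Bakry--\'Emery estimate $|\nabla P_tf|^2\le e^{-2Kt}P_t(|\nabla f|^2)$ simply does not apply to it, and the crude gradient bound $\mathrm{Lip}(P_tf)\le C\|f\|_\infty/\sqrt t$ gives nothing close to $1$. More importantly, the paper's sharp Lipschitz bound (Lemma~\ref{lem:sharplipbd} via Lemma~\ref{lem:sharpharm}) is a localized Bochner/heat-kernel argument on small balls that \emph{requires} an $L^\infty$ bound on $\nabla\Delta\tilde\phi_j$, i.e.\ equi-Lipschitzness of $\Delta\tilde\phi_j$. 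Heat-semigroup mollification of a rough pullback gives no such second-order control. The paper instead produces $\phi_{i,j}$ as solutions of the Poisson equation $\Delta_i\phi_{i,j}=\psi_{i,j}$, where $\psi_{i,j}$ is an equi-Lipschitz approximation of the \emph{smooth} function $\Delta\phi_j\in C^\infty(M^N)$ obtained from \cite[Lemma~2.10]{AmbrosioHonda2}; this is what makes $\Delta_i\phi_{i,j}$ equi-Lipschitz and unlocks the sharp gradient estimate. Your appeal to ``almost-rigidity of the Cheeger energy'' does not supply this input, and you yourself flag this step as the main obstacle.

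\textbf{Lower H\"older bound.} Your plan is to take a reference bi-H\"older map $G$ from the intrinsic Reifenberg theorem and compare $F$ to $G$ at small scales using that $\sup_x\dist_g(F(x),G(x))$ is small. This fails precisely at the scales you need: if $\sup_x\dist_g(F(x),G(x))\le\eta$, then
\begin{equation}
\dist_g(F(x),F(y))\ge\dist_g(G(x),G(y))-2\eta\ge(1-\tfrac\epsilon2)\dist(x,y)^{1+\epsilon/2}-2\eta,
\end{equation}
which is vacuous whenever $\dist(x,y)\lesssim\eta^{1/(1+\epsilon)}$. There is no mechanism in your outline forcing $\eta$ to be of second order in $\dist(x,y)$. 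The paper gets the small-scale lower bound not by comparison with any reference homeomorphism but directly from the Transformation Theorem (Proposition~\ref{thm:transformation}), whose Cholesky-normalized maps $F_{tB}$ control the smallest singular value of $T_{F,B(y,\rho)}^{-1}$ by $\rho^{\Psi}$ (Corollary~\ref{cor:trans}), yielding $|F(y)-F(z)|\ge\rho^{\Psi}|F_{B(y,\rho)}(y)-F_{B(y,\rho)}(z)|$ and then the $\dist^{1+\Psi}$ lower bound at scale $\rho=\dist(y,z)/2$ via the GH-approximation property of the normalized map at that scale (Proposition~\ref{prop:lowerholderndim}). This blow-up/Cholesky machinery, inherited from \cite{BrueNaberSemola,CheegerJiangNaber}, is the content your argument is missing.

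Finally, a small point: your degree argument for surjectivity presupposes that ``the topology of $X$'' is already known via intrinsic Reifenberg, which is fine, but the paper's route is lighter — once the local bi-H\"older estimate holds, $F$ is injective and a local homeomorphism, so invariance of domain plus compactness gives the global homeomorphism without invoking a degree.
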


Let us emphasize that the developments along this direction, including the theorem above, are heavily based on recent techniques established in \cite{CheegerNaber, CheegerJiangNaber}.

\subsection{Results}
We say that a map $F:U \to M^k$ from a bounded open subset $U$ of an $\RCD(K, N)$ space $(X, \dist, \meas)$ to a $k$-dimensional (not necessarily complete) Riemannian manifold $(M^k, g)$ is \textit{regular} if $\phi \circ F \in D(\Delta, U)$ with $\Delta (\phi \circ F) \in L^{\infty}(U, \meas)$ for any $\phi \in C^{\infty}(M^k)$. Since any smooth manifold can be smoothly embedded into a Euclidean space, we know that any such a regular map $F$ is locally Lipschitz (see Proposition \ref{prop:regulargradient}). 

The main results of the paper are stated as follows. 
\begin{theorem}\label{thm:canonicaldiff}
Let us consider a measured Gromov-Hausdorff convergent sequence of compact $\RCD(K, N)$ spaces:
\begin{equation}\label{eq:mghconv}
(X_i, \dist_i, \meas_i) \stackrel{\mathrm{mGH}}{\to} (X, \dist, \meas).
\end{equation}
Assume that $N$ is an integer and $(X, \dist)$ is isometric to an $N$-dimensional closed Riemannian manifold $(M^N, \dist_g)$. Then  we have the following.
\begin{enumerate}
\item\label{1} If a sequence of regular maps $F_i:X_i \to M^N$ is  equi-regular, namely,
\begin{equation}
\sup_i\|\Delta_i (\phi \circ F_i)\|_{L^{\infty}}<\infty, \quad \forall \phi \in C^{\infty}(M^N),
\end{equation}
and satisfies
\begin{equation}\label{eq:pull}
\int_{X_i}\left| g_i-F^*_ig\right|\di \meas_i \to 0,
\end{equation}
where $\Delta_i, g_i$ denote the Laplacian,  the canonical Riemannian metric of $(X_i, \dist_i, \meas_i)$, respectively, then we have the following.
\begin{enumerate}
\item For any sufficiently large $i$, $F_i$ gives a homeomorphism and an $\epsilon_i$-Gromov-Hausdorff approximation with
\begin{equation}\label{isbbs}
(1-\epsilon_i)\dist_i (x_i, y_i)^{1+\epsilon_i}\le \dist_g(F_i(x_i), F_i(y_i)) \le C(N)\dist_i(x_i, y_i),\quad \forall x_i, y_i \in X_i
\end{equation}
for some $\epsilon_i \to 0^+$ and  some positive constant $C(N)$ depending only on $N$.
\item If each $(X_i, \dist_i)$ is isometric to an $N$-dimensional closed Riemannian manifold $(M^N_i, \dist_{g_i})$ and $F_i$ is smooth, then $F_i$ gives a diffeomorphism for any sufficiently large $i$.  
\end{enumerate}
\item\label{2} Let $\Phi_i=(\phi_{i, 1}, \ldots, \phi_{i, k}):X_i \to \mathbb{R}^k$ be a sequence of equi-regular maps converging uniformly to a smooth isometric  (namely, $\Phi^*g_{\mathbb{R}^k}=g$) embedding $\Phi:M^N \hookrightarrow \mathbb{R}^k$ and let $\pi$ be a $C^{1, 1}$-Riemannian submersion from a neighborhood of $\Phi(M^N)$ to $\Phi(M^N)$ with $\pi|_{\Phi(M^N)}=\mathrm{id}_{\Phi(M^N)}$. If  the sequence $\Delta_i\phi_{i, j}$ is equi-Lipschitz,
then the sequence $F_i:=\Phi^{-1}\circ \pi \circ \Phi_i:X_i \to M^N$ is equi-regular with (\ref{eq:pull}), and (\ref{isbbs}) can be improved to
\begin{equation}\label{eq:bihl3}
(1-\epsilon_i)\dist_i (x_i, y_i)^{1+\epsilon_i}\le \dist_g(F_i(x_i), F_i(y_i)) \le (1+\epsilon_i)  \dist_i(x_i, y_i),\quad \forall x_i, y_i \in X_i
\end{equation}
for some $\epsilon_i \to 0^+$.
\item\label{3} Let $\Phi=(\phi_1, \ldots, \phi_k):M^N \hookrightarrow \mathbb{R}^k$ be a smooth isometric embedding. %and let $\pi:B(\Phi(M^N), \tau) \to \Phi(M^N)$ be the canonical projection along the exponential map from a submanifold $\Phi(M^N)$ in $\mathbb{R}^k$ for some $\tau \in (0, 1)$. 
Then we have the following.
\begin{enumerate}
\item\label{a} There exists a sequence of equi-regular maps $\Phi_i=(\phi_{i, 1},\ldots, \phi_{i, k}):X_i \to \mathbb{R}^k$ such that $\Phi_i$ converge uniformly to $\Phi$ and that the sequence $\Delta_i \phi_{i, j}$ is equi-Lipschitz and $H^{1,2}$-strongly converge to $\Delta \phi_j$.   %Letting $F_i:=\Phi^{-1}\circ \pi \circ \Phi_i:X_i \to M^N$ for any sufficiently large $i$, $F_i$ gives a homeomorphism and an $\epsilon_i$-Gromov-Hausdorff approximation with 
%\begin{equation}\label{eq:bihl}
%(1-\epsilon_i)\dist_i (x_i, y_i)^{1+\epsilon_i}\le \dist_g(F_i(x_i), F_i(y_i)) \le (1+\epsilon_i) \dist_i(x_i, y_i),\quad \forall x_i, y_i \in X_i
%\end{equation}
%for some $\epsilon_i \to 0^+$.
\item\label{b} If each $(X_i, \dist_i)$ is isometric to an $N$-dimensional closed Riemannian manifold $(M^N_i, \dist_{g_i})$, then such $\Phi_i$ as in (\ref{a}) can be chosen as smooth maps. %In particular $F_i$ gives a diffeomorphism for any sufficiently large $i$.  
\end{enumerate}
\end{enumerate}
\end{theorem}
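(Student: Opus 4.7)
The plan is to prove the three parts in reverse order of their logical dependence, namely (\ref{3}), (\ref{2}), then (\ref{1}), since (\ref{3}) produces the input for (\ref{2}) and (\ref{2}) verifies the hypothesis of (\ref{1}). The guiding principle throughout is that equi-regularity controls first-order behavior via Proposition \ref{prop:regulargradient}, while the equi-Lipschitz hypothesis on the Laplacians provides the second-order control needed to upgrade a.e.\ convergences to uniform ones.

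For (\ref{3}), I would construct the $\phi_{i,j}$ by a resolvent transfer: set $f_j:=\phi_j-\Delta\phi_j$, which is smooth and hence Lipschitz on the closed manifold $M^N$; transfer $f_j$ to an equi-Lipschitz sequence $f_{i,j}$ on $X_i$ by composing $f_j$ with $\delta_i$-GH approximations $h_i:X_i\to M^N$ and applying a short-time heat-flow smoothing; then define $\phi_{i,j}$ as the unique solution of $(\Id-\Delta_i)\phi_{i,j}=f_{i,j}$. The mGH-stability of the resolvent $(\Id-\Delta)^{-1}$ yields the $H^{1,2}$-strong convergence $\phi_{i,j}\to\phi_j$ and, after applying $\Delta_i$, the $H^{1,2}$-strong convergence $\Delta_i\phi_{i,j}\to\Delta\phi_j$. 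Since $\Delta_i\phi_{i,j}=\phi_{i,j}-f_{i,j}$, the equi-Lipschitz control on $\Delta_i\phi_{i,j}$ follows from that of $\phi_{i,j}$ (available via equi-regularity and Proposition \ref{prop:regulargradient}) and of $f_{i,j}$. For (\ref{b}), in the smooth setting one can smooth the resolvent construction above or appeal directly to standard elliptic regularity on each $(M_i^N,g_i)$.

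For (\ref{2}), the equi-regularity of $F_i=\Phi^{-1}\circ\pi\circ\Phi_i$ follows from the chain rule: writing $\psi\circ F_i=\tilde\psi\circ\Phi_i$ with $\tilde\psi:=\psi\circ\Phi^{-1}\circ\pi\in C^{1,1}$, the Laplacian expands into equi-bounded first-order terms $(\partial_l\tilde\psi)\Delta_i\phi_{i,l}$ and second-order terms $(\partial_l\partial_m\tilde\psi)\langle\nabla\phi_{i,l},\nabla\phi_{i,m}\rangle_i$. The condition (\ref{eq:pull}) follows from $\Phi_i^*g_{\mathbb R^k}\to g$ (delivered by equi-regularity and $H^{1,2}$-strong convergence of the components) combined with the fact that $d(\Phi^{-1}\circ\pi)$ restricted to $T\Phi(M^N)$ is an isometry while its kernel is the normal bundle. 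For the improved $(1+\epsilon_i)$-Lipschitz bound, the equi-Lipschitz Laplacian hypothesis together with the Bochner inequality and recent second-order $\RCD$ estimates yields equi-continuity of $|\nabla\phi_{i,j}|^2$, which upgrades the $L^1$-convergence $\Phi_i^*g_{\mathbb R^k}\to g_i$ to a uniform bound on the operator norm of $d\Phi_i$; this bound transfers through the isometric projection $\Phi^{-1}\circ\pi$.

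For (\ref{1}), Proposition \ref{prop:regulargradient} gives equi-Lipschitz $F_i$ with constant $C(N)$, yielding the right inequality of (\ref{isbbs}). For the left inequality and the topological conclusion, extract by Arzel\`a-Ascoli (along the mGH convergence) a subsequential uniform limit $F:M^N\to M^N$ of $F_i$; condition (\ref{eq:pull}) forces $F^*g=g$, so $F$ is a local isometry, hence a finite Riemannian covering of $M^N$, and a volume/degree count forces $\deg F=1$ and therefore $F$ to be an isometry. Consequently each $F_i$ is a genuine $\epsilon_i$-Gromov-Hausdorff approximation. The H\"older lower bound is obtained by combining the intrinsic Reifenberg theorem of \cite{CheegerColding1} with the almost-rigidity of \cite{DePhillippisGigli} as deployed in \cite{KM}: the resulting abstract bi-H\"older homeomorphisms $G_i:X_i\to M^N$ (with H\"older exponents tending to $1$) satisfy that $G_i^{-1}\circ F_i$ is uniformly close to an isometry of $M^N$, so the H\"older bound transfers from $G_i$ to $F_i$. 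Injectivity from the H\"older lower bound together with invariance of domain promotes $F_i$ to a homeomorphism. For (\ref{1})(b), smoothness plus a pointwise upgrade of the pullback convergence makes $F_i$ a local diffeomorphism, and combined with (\ref{1})(a) it is a diffeomorphism. The principal obstacle is precisely this step: converting the integrated estimate $\int_{X_i}|g_i-F_i^*g|\,\di\meas_i\to 0$ into a quantitative H\"older lower bound tied to our specific $F_i$. This is where the equi-regularity hypothesis is essential and where the second-order $\RCD$ machinery of \cite{CheegerNaber,CheegerJiangNaber} is used most critically.
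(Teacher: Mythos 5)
Your construction for part~(\ref{3}) is a legitimate and essentially equivalent variant of the paper's: the paper inverts $\Delta_i$ on the mean-zero transfer of $\Delta\phi_j$, you invert the resolvent $(\Id-\Delta_i)$; both rely on the spectral-stability machinery of Ambrosio--Honda--Gigli--Mondino--Savar\'e and produce the same conclusion. Part~(\ref{1}) up to the GH-approximation claim also tracks the paper (you close the surjectivity argument via covering-space degree, the paper via ``surjective $1$-Lipschitz self-map of a compact metric space is an isometry''; both are fine).

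There are, however, two genuine gaps, and they sit precisely at the hard core of the theorem.

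First, the H\"older lower bound in (\ref{1}) cannot be obtained by the composition argument you propose. If $G_i:X_i\to M^N$ denotes the abstract bi-H\"older map of Kapovitch--Mondino, then $H_i:=G_i^{-1}\circ F_i$ being \emph{uniformly} close to an isometry is only a $C^0$ statement; it gives no Lipschitz or H\"older control on $H_i$ at small scales, and without that you cannot transfer the lower bound from $G_i$ to $F_i=G_i\circ H_i$. (Concretely: one could have $\dist_g(F_i(x),F_i(y))$ much smaller than $\dist_g(G_i(x),G_i(y))$ for nearby $x,y$ while keeping $H_i$ $C^0$-close to an isometry.) The paper avoids this by proving a \emph{quantitative transformation theorem} (Proposition~\ref{thm:transformation}), following Bru\`e--Naber--Semola and Cheeger--Jiang--Naber: one tracks the Cholesky transforms $T_{F,tB}$ of the splitting map on all scales, proves $|T_{F,tB}|_\infty\le t^{-\Psi}$ via a blow-up-to-$\mathbb R^N$ contradiction, and this scale-by-scale control is exactly what produces the $\dist_i(x,y)^{1+\epsilon_i}$ lower bound (Proposition~\ref{prop:lowerholderndim}, Corollaries~\ref{thm:canon_Reif}, \ref{cor:reifen}). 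There is no way to bypass this multi-scale argument by appealing to the abstract intrinsic Reifenberg map.

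Second, in part~(\ref{2}) you assert that the equi-Lipschitz hypothesis on $\Delta_i\phi_{i,j}$ yields ``equi-continuity of $|\nabla\phi_{i,j}|^2$''. This is false on general $\RCD(K,N)$ spaces: even for harmonic functions the minimal relaxed slope $|\nabla f|$ need not be continuous (the paper records this explicitly, with the reference to De~Philippis--N\'u\~nez-Zimbr\'on and the failure of $\mathrm{CZ}(p)$ for $p>2$). What the equi-Lipschitz Laplacian actually buys is an $\meas$-a.e.\ \emph{pointwise upper bound} $|\nabla f|\le\lambda+C\epsilon$ obtained by integrating the Bochner inequality against heat kernel test functions (Lemma~\ref{lem:sharpharm}, the Cheeger--Naber argument); the upper Lipschitz constant $1+\epsilon_i$ then follows via the local Sobolev-to-Lipschitz property. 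You need this a.e.\ sup bound, not continuity, and the mechanism that produces it is absent from your outline. The same gap affects your argument for (\ref{1})(b): the paper deduces the diffeomorphism from the already-established H\"older lower bound via Taylor's theorem, whereas ``a pointwise upgrade of the pullback convergence'' is not available from the $L^1$ hypothesis $\int_{X_i}|g_i-F_i^*g|\to0$.
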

Let us give few comments on Theorem \ref{thm:canonicaldiff}. Under the same setting as above, 
\begin{itemize}
\item there exists a convergent sequence of positive numbers $c_i \to c$ in $(0, \infty)$ such that $\meas=c\mathcal{H}^N$ and $\meas_i=c_i\mathcal{H}^N$ hold for any sufficiently large $i$. See \cite{KM} (see also \cite{BGHZ, H}). Namely, after normalizations of the reference measures, (\ref{eq:mghconv}) is a non-collapsed sequence of non-collapsed compact $\RCD(K, N)$ spaces (see section \ref{sec2});
\item such $F_i^{-1}$ as in (\ref{1}) is not  locally Lipschitz for any sufficiently large $i$, whenever $X$ has a singular point. This is a direct consequence of a characterization of the $N$-dimensional Euclidean space obtained in \cite{HS} (see Proposition \ref{propbilip});
\item for a sequence of regular maps $F_i:X_i \to M^N$, it is equi-regular if and only if for any $j$,
\begin{equation}
\sup_i\|\Delta_i(\phi_j \circ F_i)\|_{L^{\infty}}<\infty, 
\end{equation}
where $\Phi=(\phi_1, \ldots, \phi_k):M^N \hookrightarrow \mathbb{R}^k$ is a smooth isometric embedding, because for all $\RCD(K, \infty)$ space $(X, \dist, \meas)$, Lipschitz functions $f_i \in D(\Delta) (i=1, 2,\ldots k)$ and $C^{1,1}$-Lipschitz function $\phi:\mathbb{R}^k \to \mathbb{R}$, letting $F=(f_1,\ldots, f_k)$ we have $\phi \circ F \in D(\Delta)$ with 
\begin{equation}\label{eq:leipnit}
\Delta (\phi \circ F)=\sum_{i, j=1}^k\frac{\partial^2\phi}{\partial x_i \partial x_j}(F(x))\langle \nabla f_i, \nabla f_j\rangle (x)+\sum_{i=1}^k\frac{\partial \phi}{\partial x_i}(F(x))\Delta f_i(x);
\end{equation}
\item the Nash embedding theorem guarantees an existence of an isometric embedding of $(M^N, g)$ into $\mathbb{R}^k$, where $k$ depends only on $N$. Moreover the canonical projection along the exponential map from a submanifold $\Phi(M^N)$ in $\mathbb{R}^k$ gives a typical example of $\pi$ as in (\ref{2}). In particular then $F_i=\Phi^{-1} \circ \pi \circ \Phi_i$ as in (\ref{b}) gives a diffeomorphism with (\ref{eq:bihl3}). 
\end{itemize}
As a corollary of Theorem \ref{thm:canonicaldiff} we have the following canonical sphere theorem which generalizes a result of \cite{WangZhao} to $\RCD$ spaces (see \cite{Perelmanjams} for the original work for sphere theorems of positive Ricci curvature in the smooth framework and see also \cite{Coldinga, Coldingb, Peterson} for outstanding contributions along this direction). Note that we do not assume $K=N-1$.
\begin{theorem}\label{thm:hemisphere}
For all $K \in \mathbb{R}$, $N \in \mathbb{N}$ and $\epsilon \in (0, 1)$, there exists $\delta=\delta(K, N, \epsilon) \in (0, 1)$ such that if an $\RCD(K, N)$ space $(X, \dist, \meas)$ satisfies
\begin{equation}
\dist_{\mathrm{GH}}\left( (X, \dist), (\mathbb{S}^N, \dist_{\mathbb{S}^N})\right)<\delta,
\end{equation}
where $\dist_{\mathrm{GH}}$ denotes the Gromov-Hausdorff distance, then a map $F:X \to \mathbb{S}^N$
\begin{equation}
F:=\left(\sum_{i=1}^{N+1}f_i^2\right)^{-1/2}\cdot \left(f_1,\ldots, f_{N+1}\right), \quad \text{where} \,\,\, \frac{1}{\meas_i(X_i)}\int_Xf_i^2\di \meas =\frac{1}{N+1}, \quad \forall i,
\end{equation}
gives an well-defined  homeomorphism and an $\epsilon$-Gromov-Hausdorff approximation with
\begin{equation}\label{eq:bihlip}
(1-\epsilon)\dist (x, y)^{1+\epsilon}\le \dist_{\mathbb{S}^n}(F(x), F(y)) \le (1+\epsilon) \dist(x, y),\quad \forall x, y \in X,
\end{equation}
where $f_i$ is an eigenfunction of $-\Delta$ with the $i$-th eigenvalue $\lambda_i$.
%For all $n \in \mathbb{N}_{\ge 2}$ and $\epsilon \in (0, 1)$, there exists $\delta=\delta(n, \epsilon) \in (0, 1)$ such that if the $n$-th eigenvalue $\lambda_n$ of $-\Delta$ on an $\RCD(n-1, n)$ space $(X, \dist, \meas)$ satisfies 
%\begin{equation}
%\lambda_n \le n+\delta,
%\end{equation}
%then we have the following.
%\begin{enumerate}
%\item $\meas=c\mathcal{H}^n$ holds for some constant $c \in (0, \infty)$. In particular $(X, \dist, \mathcal{H}^n)$ is a non-collapsed $\RCD(n-1, n)$ space. \item If $(X, \dist, \meas)$ has no boundary in the sense of \cite{BrueNaberSemola}, then a map $F:X \to \mathbb{S}^n$ is defined by
%\begin{equation}
%F:=\left(\sum_{i=1}^{n+1}f_i^2\right)^{-1/2}\cdot \left(f_1,\ldots, f_{n+1}\right), \quad \intav_Xf_i^2\di \meas =\frac{1}{n+1}, \quad \forall i,
%\end{equation}
%where $f_i$ is an eigenfunction of $-\Delta$ with the $i$-th eingenvalue $\lambda_i$,
%gives an $\epsilon$-Gromov-Hausdorff approximation and a homeomorphism with
%\begin{equation}\label{eq:bihlip}
%(1-\epsilon)\dist (x, y)^{1+\epsilon}\le \dist_{\mathbb{S}^n}(F(x), F(y)) \le (1+\epsilon) \dist(x, y),\quad \forall x, y \in X.
%\end{equation}
%\end{enumerate}
\end{theorem}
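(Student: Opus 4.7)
The plan is to deduce Theorem \ref{thm:hemisphere} from part (2) of Theorem \ref{thm:canonicaldiff}, using (an orthogonal image of) the canonical inclusion $\iota:\mathbb{S}^N\hookrightarrow \mathbb{R}^{N+1}$ as the isometric embedding and the $C^{\infty}$ radial retraction $\pi(x)=x/|x|$ as the projection. Arguing by contradiction, suppose the statement fails for some $K, N, \epsilon$. Then there exist a sequence of compact $\RCD(K, N)$ spaces $(X_i, \dist_i, \meas_i)$ with $\dist_{\mathrm{GH}}((X_i, \dist_i), (\mathbb{S}^N, \dist_{\mathbb{S}^N}))\to 0$, together with choices of normalized eigenfunctions $f_{i, j}$ for $1\le j\le N+1$, for which the corresponding map $F_i$ is either not a well-defined homeomorphism or fails the estimate \eqref{eq:bihlip}.

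After multiplying each $\meas_i$ by a suitable positive constant (as recalled in the first comment following Theorem \ref{thm:canonicaldiff}), we may, up to a subsequence, upgrade the Gromov-Hausdorff convergence to measured Gromov-Hausdorff convergence to $(\mathbb{S}^N, \dist_{\mathbb{S}^N}, c\mathcal H^N)$ for some $c>0$. Spectral convergence along mGH-convergent $\RCD$ sequences then ensures that the first $N+1$ nonzero eigenvalues $\lambda_{i, 1}\le\cdots\le\lambda_{i, N+1}$ of $-\Delta_i$ all converge to the first nonzero eigenvalue $N$ of $-\Delta_{\mathbb{S}^N}$, whose eigenspace has dimension $N+1$ and is spanned by the restrictions of the coordinate functions on $\mathbb{R}^{N+1}$. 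By a Gram-Schmidt procedure aligned with a fixed orthonormal basis of this limit eigenspace, we may choose the $f_{i, j}$ to be mutually $L^2(\meas_i)$-orthogonal, satisfying the normalization prescribed in Theorem \ref{thm:hemisphere}, and converging $L^2$-strongly to some orthonormal basis $\{\bar f_j\}_{j=1}^{N+1}$ of the limit eigenspace. The resulting limit map $\bar\Phi = (\bar f_1, \ldots, \bar f_{N+1})$ coincides with $A\circ \iota$ for some $A\in O(N+1)$, and is therefore a smooth isometric embedding of $\mathbb{S}^N$ into $\mathbb{R}^{N+1}$.

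To apply part (2) of Theorem \ref{thm:canonicaldiff} it remains to check equi-regularity, equi-Lipschitz continuity of the Laplacian components, and uniform convergence $\Phi_i\to\bar\Phi$, where $\Phi_i:=(f_{i, 1}, \ldots, f_{i, N+1})$. Since $\Delta_i f_{i, j} = -\lambda_{i, j} f_{i, j}$ and $\lambda_{i, j}\to N$, the quantitative Lipschitz estimates for eigenfunctions on $\RCD(K, N)$ spaces give equi-Lipschitz control of both the $f_{i, j}$ and the $\Delta_i f_{i, j}$. Uniform convergence then follows by combining $L^2$-strong convergence with equi-Lipschitz continuity through a standard Arzel\`a-Ascoli argument along mGH approximations. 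With $\pi$ taken to be the smooth radial retraction on $\mathbb{R}^{N+1}\setminus\{0\}$, which restricts to the identity on $\bar\Phi(\mathbb{S}^N)=\mathbb{S}^N$, part (2) of Theorem \ref{thm:canonicaldiff} supplies, for all sufficiently large $i$, a homeomorphism $\tilde F_i := \bar\Phi^{-1}\circ\pi\circ\Phi_i = A^{-1}\bigl(\Phi_i/|\Phi_i|\bigr)$ satisfying \eqref{eq:bihl3}. Composition with the isometry $A$ of $(\mathbb{S}^N, \dist_{\mathbb{S}^N})$ transforms $\tilde F_i$ into the map $F$ of Theorem \ref{thm:hemisphere}, so the same bi-H\"older-Lipschitz estimate passes to $F$, contradicting the choice of the sequence.

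The main subtlety lies in the degeneracy of the limit spectrum: the first $N+1$ nonzero eigenvalues on $X_i$ may be distinct, while in the limit they coalesce into a single eigenvalue of multiplicity $N+1$. This forces a careful orthonormal choice of approximating eigenfunctions compatible with a prescribed orthonormal basis of the limit eigenspace, so that $\Phi_i$ converges uniformly to a single isometric embedding $\bar\Phi$ rather than drifting along $i$ through an unknown orthogonal transformation. Modulo this bookkeeping, the deduction from Theorem \ref{thm:canonicaldiff} is essentially immediate.
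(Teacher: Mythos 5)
Your proposal is correct and follows essentially the same route as the paper: reduce by compactness to the mGH-convergent sequence case (this is the paper's Theorem \ref{thm:spherecanonical}), use spectral convergence to show $\Phi_i$ converges uniformly to an $O(N+1)$-rotation of the canonical inclusion with equi-Lipschitz control of $\Delta_i f_{i,j}$, and apply part (\ref{2}) of Theorem \ref{thm:canonicaldiff} with the radial projection $\pi_{\mathbb{R}^{N+1}}$. Your explicit treatment of the GH-to-mGH upgrade via rescaling the measures and of the eigenvalue-degeneracy/basis-alignment bookkeeping is more detailed than the paper's terse passage from Theorem \ref{thm:hemisphere} to Theorem \ref{thm:spherecanonical}, but it encodes the same content and introduces no new ideas or gaps.
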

See \cite{Honda, HM, KM} for related sphere theorems in the $\RCD$ setting.
We also prove a similar result for flat tori. See Theorem \ref{thm:flat}.

The results above are proved by applying recent techniques in \cite{BrueNaberSemola, CheegerJiangNaber, WangZhao}, with slight modifications along \cite{AmbrosioHonda, AmbrosioHonda2, Honda}. In the next subsection let us provide an outline of the proof  of the main results.
%Note that $C^{\alpha}$ homeomorphisms in both cases above can be realized by $\epsilon$-Gromov-Hausdorff approximations. %and that since we may assume without loss of generality that $(X, \dist, \meas)$ is non-collapsed under the assumption of the theorem above, the boundary in the sense of \cite{BrueNaberSemola} makes sense.

\subsection{Strategy of proof and organization of the paper}
Let us focus on a simplified question here;
\begin{center}
For given (\ref{eq:mghconv2}), how to find a homeomorphism $F_i:X_i \to X$ satisfying
\end{center}
\begin{equation}\label{eq:biholderlip}
(1-\epsilon)\dist_i(x, y)^{1+\epsilon}\le \dist(F_i(x), F_i(y))\le (1+\epsilon)\dist_i (x, y),\quad \forall x, y \in X_i?
\end{equation}
Recall that $X$ is smooth and $N$-dimensional.
We first fix a smooth isometric embedding $\Phi=(\phi_1, \ldots, \phi_k):X \hookrightarrow \mathbb{R}^k$. Then thanks to stability results for Sobolev functions obtained in \cite{AmbrosioHonda, AmbrosioHonda2}, we can find a sequence of equi-regular maps $\Phi_i=(\phi_{i, 1},\ldots, \phi_{i,k}):X_i \to \mathbb{R}^k$ with equi-Lipschitz continuity of $\Delta_i\phi_{i, j}$ satisfying that $\Phi_i$ converge uniformly to $\Phi$. Then for any fixed $r \in (0, 1)$ we know
\begin{equation}\label{eq:smallav}
\sup_{x \in X_i}\frac{1}{\meas_i(B(x, r))}\int_{B(x, r)}\left|g_i-\Phi_i^*g_{\mathbb{R}^k}\right|\di \meas_i \to 0, \quad ( i \to \infty),
\end{equation}
where $B(x, r)$ denotes the open ball of radius $r$ centered at $x$. Combining (\ref{eq:smallav}) with a modified transformation result, Proposition \ref{thm:transformation},  (previously obtained in \cite{BrueNaberSemola, CheegerJiangNaber, WangZhao} for harmonic/eigen maps) for $\Phi_i$,  it is proved that  for small $\epsilon, r \in (0, 1)$,
\begin{equation}\label{eq;r}
(1-\epsilon)\dist_i(x, y)^{1+\epsilon}\le \left| \Phi_i(x)-\Phi_i(y)\right|_{\mathbb{R}^k}\le (1+\epsilon)\dist_i(x, y),\quad \forall x, y \in X_i \,\,\text{with $\dist_i(x, y) \le r$}
\end{equation}
for any sufficiently large $i$. Let us emphasize that the equi-Lipschitz continuity of $\Delta_i\phi_{i, j}$ plays a key role to get the sharp Lipschitz bound in (\ref{eq;r}). Thanks to the uniform convergence of $\Phi_i$ to $\Phi$, we can define maps $F_i:X_i \to X$ by $F_i:=\Phi^{-1}\circ \pi \circ \Phi_i$, where $\pi$ denotes the projection from a neighborhood of $\Phi(X)$ to $\Phi(X)$ along the exponential map from $\Phi(X)$ in $\mathbb{R}^k$. Since $\pi$ is almost $1$-Lipschitz locally, we have by (\ref{eq;r})
\begin{equation}\label{ddssx}
(1-\epsilon)\dist_i(x, y)^{1+\epsilon}\le \dist \left( F_i(x), F_i(y)\right)\le (1+\epsilon)\dist_i(x, y),\quad \forall x, y \in X_i \,\,\text{with $\dist_i(x, y) \le r$,}
\end{equation}
where the lower bound of (\ref{ddssx}) is justified by applying (\ref{eq;r}) for equi-regular maps $\Phi\circ F_i$. 
Since $F_i$ converge to $\mathrm{id}_X$, $F_i$ gives an $\epsilon_i$-Gromov-Hausdorff approximation for some $\epsilon_i \to 0^+$. Combining this with (\ref{ddssx}) proves (\ref{eq:biholderlip}). Finally it follows from invariance of domain that $F_i$ is a homeomorphism.

Note that all arguments above are justified by using a fact that $(X_i, \dist_i, \meas_i)$ and $(X, \dist, \meas)$ are non-collapsed in the sense of \cite{DePhillippisGigli}, up to multiplication by positive constants to the reference measures $\meas_i, \meas$, because we need a uniform Reifenberg flatness with respect to (\ref{eq:mghconv2}). This fact is guaranteed by \cite{H} (or recent \cite{BGHZ}).

The paper is organized as follows.
First of all, we emphasize that the proofs rely on heavily previously known analytical tools, namely we do not provide new substantial analytical results. However we believe that conclusions of the paper, in particular from the point of view of geometry, are interesting. Therefore in order to give a short presentation, assuming that readers are familiar with the basics of the theory of $\RCD$ spaces (a few facts for $\RCD$ spaces we need will be explained in section \ref{sec2}), we give the proof of Theorem \ref{thm:canonicaldiff} with technical results in sections \ref{section3}, \ref{section4} and \ref{section5}. As applications, we discuss the cases when the limit space is a sphere or a flat torus in section \ref{section6}, in particular Theorem \ref{thm:hemisphere} is proved. Finally in the appendix, section \ref{section7}, we provide a proof of a maximum principle, Proposition \ref{lem:subharm2}, as an independent interest, where this  for closed manifolds was a crucial role in \cite{WangZhao}.

\textbf{Acknowledgement.}
The both authors wish to thank Elia Bru\`e and Daniele Semola for fruitful discussions on the paper. They also thank Zhangkai Huang for giving us comments on the paper. Moreover they are grateful to the referee for his/her careful reading on the first version and for giving us very helpful comments.
The first named author acknowledges supports of the Grant-in-Aid for
Scientific Research (B) of 20H01799, the Grant-in-Aid for Scientific Research (B) of 21H00977 and Grant-in-Aid for Transformative Research Areas (A) of 22H05105.
\section{Preliminaries}\label{sec2}
Throughout the paper we will use standard notations in this topic. For example,
\begin{itemize}
\item denote by
\begin{equation}\label{eq:psi}
\Psi (\epsilon_1,\ldots, \epsilon_l|c_1,\ldots, c_m)
\end{equation}
a function $\Psi:(\mathbb{R}_{>0})^l\times \mathbb{R}^m \to (0, \infty)$ satisfying
\begin{equation}
\lim_{(\epsilon_1,\ldots, \epsilon_l) \to 0}\Psi (\epsilon_1,\ldots, \epsilon_l|c_1,\ldots, c_m)=0,\quad \forall c_i \in \mathbb{R}.
\end{equation}
Whenever we use the notation (\ref{eq:psi}), we immediately assume that $\epsilon_1, \ldots, \epsilon_l$ are sufficiently small, where the smallness are depending only on $c_1, \ldots, c_m$;
\item denote by $C(c_1, \ldots c_m)$ a positive constant depending only on constants $c_1,\ldots, c_m$, which may be changed from line to line in the sequel;
\item put
\begin{equation}
\intav_A\di \meas:=\frac{1}{\meas (A)}\int_A \di \meas;
\end{equation}
\item denote by $B(x, r)$ the open ball of radius $r \in (0, \infty)$ centered at $x$ and by $tB$ the ball $B(x, tr)$ for any $t \in (0, \infty)$ if $B=B(x, r)$.
\end{itemize}

In order to keep our presentation to be short, we assume that readers are familiar with basics on the theory of $\RCD$ spaces, for example, including;
\begin{itemize}
\item the definition of $\RCD(K, N)$ space $(X, \dist, \meas)$ for $K \in \mathbb{R}$ and $N \in [1, \infty]$;
\item the measured Gromov-Hausdorff convergence, the pointed version and their metrizations, $\dist_{\mathrm{mGH}}$, $\dist_{\mathrm{pmGH}}$;
\item the spaces of $L^p$-vector fields, $L^p$-tensor fields of type $(0, 2)$ over a Borel subset $A$ of an $\RCD(K, N)$ space, denoted by $L^p(T(A, \dist, \meas))$, $L^p((T^*)^{\otimes 2}(A, \dist, \meas))$, respectively;
\item $L^p$-weak/strong convergence of functions, vector fields, and tensor fields of type $(0, 2)$ with respect to the $\dist_{\mathrm{pmGH}}$-convergence.
\end{itemize}
We refer, for instance, \cite{A, AmbrosioGigliMondinoRajala, AmbrosioGigliSavare13, AmbrosioGigliSavare14, AmbrosioHonda, AmbrosioHonda2, AmbrosioMondinoSavare, CavallettiMilman, ErbarKuwadaSturm, Gigli13, Gigli1, Gigli, GigliMondinoSavare13, GP2} for the references. Let us here provide further details we will use later. In the sequel a real number $K \in \mathbb{R}$ and a finite $N \in [1, \infty)$ are fixed.

Let $(X, \dist, \meas)$ be an $\RCD(K, N)$ space. Assume that $X$ is not a single point. Then it is proved in \cite{BrueSemola} that there exists a unique $n \in \mathbb{N} \cap [1, N]$ such that the \textit{$n$-dimensional regular set}, denoted by $\mathcal{R}_n$, has the $\meas$-full measure, namely, for $\meas$-a.e. $x \in X$, we have
\begin{equation}\label{eq:regul}
\left(X, \frac{1}{r}\dist, \frac{1}{\meas(B(x, r))}\meas, x\right) \stackrel{\mathrm{pmGH}}{\to} \left(\mathbb{R}^n, \dist_{\mathbb{R}^n},\frac{1}{\omega_n}\mathcal{H}^n, 0_n\right),\quad (r \to 0^+),
\end{equation}
where $\dist_{\mathbb{R}^n}$ denotes the standard Euclidean distance and $\omega_n$ denotes the volume of a unit ball in $\mathbb{R}^n$ with respect to the $n$-dimensional Hausdorff measure $\mathcal{H}^n$. Call $n$ the \textit{essential dimension} of $(X, \dist, \meas)$ and denote by $\dim(X)$ it for short.

On the other hand there exists a unique $g \in L^{\infty}((T^*)^{\otimes 2}(X, \dist, \meas))$, called the \textit{canonical} Riemannian metric of $(X, \dist, \meas)$, such that for all $H^{1,2}$-Sobolev functions $f_i(i=1, 2)$ on $X$,
\begin{equation}
g(\nabla f_1, \nabla f_2)(x)=\langle \nabla f_1, \nabla f_2\rangle (x),\quad \text{for $\meas$-a.e. $x \in X$}, 
\end{equation}
where $\langle \cdot, \cdot \rangle $ denotes the $\meas$-a.e. pointwise inner product induced by the minimal relaxed slopes $|\nabla f_i|$.
Then the metric measure rectifiability of $(X, \dist, \meas)$ proved in \cite{DePhillippisMarcheseRindler, GigliPasqualetto, KellMondino} (after \cite{MondinoNaber}) allows us to prove via \cite[Theorem 5.1]{GP} (see also \cite[Lemma 3.3]{AHPT}) that 
\begin{equation}\label{eq:essdim}
|g|^2(x)=n,\quad \text{for $\meas$-a.e.  $x \in X$.}
\end{equation}
Moreover stability results for Sobolev functions with respect to the $\dist_{\mathrm{pmGH}}$-convergence established in \cite{AmbrosioHonda, AmbrosioHonda2, GigliMondinoSavare13} show that if a sequence of pointed $\RCD(K, N)$ spaces $(X_i, \dist_i, \meas_i, x_i)$ pointed measured Gromov-Hausdorff converge to $(X, \dist, \meas, x)$ for some $x \in X$, then $g_i$ $L^2_{\mathrm{loc}}$-weakly converge to $g$, where $g_i$ denotes the canonical Riemannian metric of $(X_i, \dist_i, \meas_i)$. Combining this with (\ref{eq:essdim}) easily yields the lower semicontinuity of the essential dimensions;
\begin{equation}\label{eq:loweress}
\liminf_{i \to \infty}\dim (X_i)\ge \dim (X)
\end{equation}
which is originally proved in \cite{Kita} by a different way. Note that after passing to a subsequence, the equality in (\ref{eq:loweress}) holds if and only if $g_i$ $L^2_{\mathrm{loc}}$-strongly converge to $g$.

Next let us recall two fundamental functional inequalities;
\begin{enumerate}
\item{(Poincar\'e inequality)} for any ball $B=B(x,r)$,
\begin{equation}
\intav_B\left|u-\intav_B u\d\m\right|\ \d\m\le 4re^{|K|r^2}\intav_{2B} |\nabla u|\ \d\m.
\label{eq1020}
\end{equation} 
Note that this is also valid for a more general class, called $\CD(K, \infty)$ spaces, see \cite{Rajala};
\item{(Bishop-Gromov inequality)} we have
\begin{equation}\label{eq:bg}
\frac{\meas(B(x, r))}{\mathrm{Vol}_{K, N}(r)} \downarrow \quad (r \uparrow 0),
\end{equation}
where $\mathrm{Vol}_{K, N}(r)$ denotes the volume of a ball of radius $r$ in the space form satisfying ``$\dim=N$ and $\mathrm{Ric}=N$''. Note that this is also valid for a more general class, called $\CD(K,N)$ spaces, see \cite{LottVillani, Sturm06, Sturm06b}.
\end{enumerate}
Since harmonic functions play key roles in this paper, let us recall the definition here. Denote by $D(\Delta)$ the domain of the Laplacian of $(X, \dist, \meas)$, namely $f \in D(\Delta)$ holds if and only if $f$ belongs to the $H^{1,2}$-Sobolev space $H^{1, 2}(X, \dist, \meas)$ and there exists a unique $h =\Delta f \in L^2(X, \meas)$ such that 
\begin{equation}
\int_X\langle \nabla f, \nabla \phi\rangle \di \meas=-\int_Xh\phi\di \meas,\quad \forall \phi \in H^{1,2}(X, \dist, \meas).
\end{equation}
For any open subset $U$ of $X$ denote by $D(\Delta, U)$ the set of all $f \in L^2(U, \meas)$ satisfying that $\phi f \in D(\Delta)$ with $|\nabla f|, \Delta f \in L^2(U, \meas)$ for any Lipschitz function $\phi$ on $U$ with compact support, where $|\nabla f|(x), \Delta f(x)$ make sense for $\meas$-a.e. $x \in U$ because of the locality of the minimal relaxed slope. Then a function $f$ on $U$ is said to be \textit{harmonic} if $f|_V \in D(\Delta, V)$ holds with $\Delta (f|_V)=0$ for any open subset $V \Subset U$. 

Finally let us recall a nice restricted class of $\RCD(K, N)$ spaces, called \textit{non-collapsed} spaces, introduced in \cite{DePhillippisGigli} (see also \cite{Kit1}), as a synthetic counterpart of (volume) non-collapsed Ricci limit spaces. 
We say that $(X, \dist, \meas)$ is \textit{non-collapsed} if $\meas=\mathcal{H}^N$. Then any non-collapsed spaces have finer properties rather than that of general $\RCD(K, N)$ spaces. For example if $(X, \dist, \meas)$ is a non-collapsed $\RCD(K, N)$ space, then
\begin{enumerate}
\item[3]{(Bishop inequality)} the quotient of  (\ref{eq:bg}) is bounded above by $1$;
\item[4]{(Maximality of essential dimension)} the essential dimension of $X$ is equal to $N$.
\end{enumerate}
It is worth pointing out that  the converse implication of the second statement above is also satisfied, namely
if an $\RCD(K, N)$ space $(X, \dist, \meas)$ has the essential dimension $N$, then $\meas=c\mathcal{H}^N$ for some $c \in (0, \infty)$. See \cite{BGHZ, H}.
Let us end this subsection by giving the following theorem which is a direct consequene of the observation above.  See for instance \cite{KM, HM} for the proof (see also \cite{AHPT}).
\begin{theorem}\label{thm:unifreifen}
Let 
\begin{equation}
(X_i, \dist_i, \meas_i) \stackrel{\mathrm{mGH}}{\to} (X, \dist, \meas)
\end{equation}
be a measured Gromov-Hausdorff convergent sequence of compact $\RCD(K, N)$ spaces. Assume that $(X, \dist, \meas)$ is non-collapsed. Then for any sufficiently large $i$ we have $\meas_i=c_i\mathcal{H}^N$ for some $c_i \in (0, \infty)$. Furthermore
if $X$ has no-singular points, namely, $X=\mathcal{R}_N$, then for any $\epsilon \in (0, \infty)$ there exist $r \in (0, 1]$ and $i_0 \in \mathbb{N}$ such that for all $i \ge i_0$, $x_i \in X_i$ and $t \in (0, r]$, we have
\begin{equation}\label{eq:reifflat10}
\dist_{\mathrm{pmGH}}\left( \left(X_i, \frac{1}{t}\dist_i, \frac{1}{\meas_i (B(x_i, t))}\meas_i, x_i\right), \left(\mathbb{R}^N, \dist_{\mathbb{R}^N}, \frac{1}{\omega_N}\mathcal{H}^N, 0_N\right)\right)<\epsilon.
\end{equation}
\end{theorem}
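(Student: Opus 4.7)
The plan is to establish the two conclusions in turn, with the second being the main content.

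For the measure normalization $\meas_i = c_i \mathcal{H}^N$, I would combine three ingredients already at hand: the upper bound $\dim(X_i) \le N$ built into the $\RCD(K, N)$ hypothesis, the lower semicontinuity of essential dimensions recorded in (\ref{eq:loweress}), and the non-collapsing assumption $\meas = \mathcal{H}^N$ which forces $\dim(X) = N$. Together these yield $\dim(X_i) = N$ for all sufficiently large $i$, and then the characterization proved in \cite{BGHZ, H}---that an $\RCD(K, N)$ space of maximal essential dimension has reference measure proportional to $\mathcal{H}^N$---gives the desired normalization.

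For the uniform Reifenberg flatness I would argue by contradiction, following the classical blow-up pattern. If the conclusion failed, standard negation would produce $\epsilon_0 > 0$, indices $i_k \to \infty$, points $x_{i_k} \in X_{i_k}$ and radii $t_{i_k} \to 0^+$ for which the rescaled pointed metric measure spaces in (\ref{eq:reifflat10}) remain at $\dist_{\mathrm{pmGH}}$-distance at least $\epsilon_0$ from the Euclidean model. Passing to a subsequence we may assume $x_{i_k} \to x \in X$, and thanks to the first conclusion combined with the Bishop-Gromov inequality (\ref{eq:bg}) the normalization constants $\meas_{i_k}(B(x_{i_k}, t_{i_k}))^{-1}$ stay comparable to $t_{i_k}^{-N}$ uniformly in $k$, ensuring pre-compactness of the rescaled sequence among pointed $\RCD$-type spaces. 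A diagonal extraction, combined with the stability of pmGH convergence under rescaling, then identifies the limit as a tangent cone of $(X, \dist, \meas)$ at $x$. Since $X = \mathcal{R}_N$ by hypothesis, $x$ is an $N$-regular point and its tangent cone equals $(\mathbb{R}^N, \dist_{\mathbb{R}^N}, \omega_N^{-1}\mathcal{H}^N, 0_N)$, contradicting the $\epsilon_0$-separation.

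The main obstacle will be justifying the joint passage to the limit $i_k \to \infty$, $t_{i_k} \to 0^+$: this requires pre-compactness of the rescaled sequence together with continuity of the normalizing ball measures, both of which rest on the non-collapsing output of step one combined with uniform volume doubling from Bishop-Gromov. Once this diagonal extraction is carried out, the regularity hypothesis $X = \mathcal{R}_N$ makes the contradiction immediate.
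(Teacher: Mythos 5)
The paper does not prove this theorem directly---it is stated as ``a direct consequence of the observation above'' with pointers to \cite{KM, HM, AHPT}---so I will evaluate your argument on its own terms. Your treatment of the first conclusion is correct and matches the route the paper indicates: the RCD$(K,N)$ bound gives $\dim(X_i)\le N$, the lower semicontinuity \eqref{eq:loweress} forces $\dim(X_i)\ge N$ (and hence $=N$) for all large $i$ because the essential dimension is integer-valued, and the characterization of \cite{BGHZ,H} then gives $\meas_i=c_i\mathcal H^N$.

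The second part contains a genuine gap at the step ``a diagonal extraction \dots identifies the limit as a tangent cone of $(X,\dist,\meas)$ at $x$.'' A pmGH limit of the rescaled spaces $\bigl(X_{i_k},\tfrac{1}{t_{i_k}}\dist_{i_k},\cdot\,,x_{i_k}\bigr)$ along a sequence with $i_k\to\infty$ and $t_{i_k}\to 0^+$ is a limit of blow-ups of the \emph{approximating} spaces, not a blow-up of the \emph{limit} space, and in general these two operations do not commute: if the scales $t_{i_k}$ decay faster than the scale at which $X_{i_k}$ resembles $X$, the blow-up can see local features of $X_{i_k}$ that are invisible in $X$. For instance, the spherical suspensions $\mathbb S^0*\mathbb S^{N-1}(r_i)$ from Remark \ref{rem:sharp} converge to $\mathbb S^N$ (all of whose points are regular), yet blow-ups of $X_i$ at its conical points, at sufficiently small scales $t_i$, converge to the metric cone $C(\mathbb S^{N-1}(r_i))$---not to a tangent cone of $\mathbb S^N$. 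The conclusion $Y\cong\mathbb R^N$ for the blow-up limit $Y$ is still true, but it must be extracted from the volume structure rather than declared as a tangent-cone identification. Concretely: $Y$ is a non-collapsed RCD$(0,N)$ space; for each fixed $s>0$ and fixed $r_0>0$, Bishop--Gromov monotonicity \eqref{eq:bg} and the continuity of volume ratios under mGH convergence give
\begin{equation*}
\frac{\mathcal H^N(B^Y(y,s))}{\omega_N s^N}\ \ge\ \liminf_{k}\ (1-\Psi(r_0|K,N))\,\frac{\meas_{i_k}(B(x_{i_k},r_0))}{\mathrm{Vol}_{K,N}(r_0)}\ =\ (1-\Psi(r_0|K,N))\,\frac{\mathcal H^N(B(x,r_0))}{\mathrm{Vol}_{K,N}(r_0)},
\end{equation*}
and sending $r_0\to 0^+$ (using $x\in\mathcal R_N$ so the density is $1$) together with the Bishop upper bound forces the ratio to equal $1$ at every scale $s$, whence $Y$ is isometric to $\mathbb R^N$ by volume rigidity in the sense of \cite{DePhillippisGigli}. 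This is the missing chain of inferences; once inserted, your contradiction argument closes. Alternatively, the same volume-ratio comparison can be run directly, without contradiction, by first choosing $r$ so small that the volume ratio of $X$ is $\delta$-close to $1$ at scale $r$ uniformly in the base point (using compactness of $X$ and $X=\mathcal R_N$), then transferring to $X_i$ by volume convergence and propagating to all smaller scales by \eqref{eq:bg}, and finally invoking almost volume rigidity to convert the near-maximality of the ratio into \eqref{eq:reifflat10}.
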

Note that under the same setting as in Theorem \ref{thm:unifreifen}, the intrinsic Reifenberg theorem proved in \cite{CheegerColding1} shows that for any $\alpha \in (0, 1)$, both $X_i$ and $X$ $\alpha$-bi-H\"older homeomorphic to an $N$-dimensional closed Riemannian manifold for any sufficiently large $i$.

\section{Transformation for regular splitting map}\label{section3}
Throughout the section, we fix;
\begin{itemize}
\item an $\RCD(K, N)$ space $(X, \dist, \meas)$ for some $K \in \mathbb{R}$ and some $N \in [1, \infty)$;
\item a positive constant $L \in [1, \infty)$;
\item a ball $B=B(x, 1)$ for some $x \in X$.
\end{itemize}
Note that although we consider only a fixed ball $B$ of radius $1$ as above in the sequel, the results we will prove below can be also rewritten for a general ball $B(x, r)$ of radius $r$ after a rescaling $r^{-1}\dist$, where we will use them later immediately.
Let us start this section by rewriting the definition of regular maps defined on $B$. See also \cite[Definition 3.2]{Honda}.
\begin{definition}[Regular map]\label{def:regular}
A map $F:B \to M^k$ from $B$ to a (not necessarily complete) $k$-dimensional Riemannian manifold $(M^k, g_{M^k})$ is said to be \textit{regular} if for all $\phi \in C^{\infty}(M^k)$, we have $\phi \circ F \in D(\Delta, B)$ with $\Delta (\phi \circ F) \in L^{\infty}(B, \meas)$.
\end{definition}
%See the beginning of subsection 7.1 of \cite{Honda} for the proof of the following proposition (see also \cite{AmbrosioMondinoSavare, Jiang}).
\begin{proposition}[Lipschitz continuity]\label{prop:regulargradient}
Any regular map is locally Lipschitz. 
More precisely this Lipschitz regularity comes from a fact that for a regular function $f:B \to \mathbb{R}$, if $X \setminus  B \neq \emptyset$, $|\Delta f| \le L$ on $B$, and 
\begin{equation}\label{eql1bd}
\intav_B\left|f-\intav_Bf\di \meas \right|\di \meas \le L,
\end{equation}
then $|\nabla f| \le C(K, N, L, t)$ on $tB$ for any $t \in (0, 1)$.
\end{proposition}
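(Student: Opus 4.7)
The plan is to first reduce the local Lipschitz regularity of a regular map to the quantitative gradient estimate for real-valued regular functions, and then to prove the latter in two steps: upgrading the $L^1$-oscillation bound to an $L^\infty$-bound, and then converting the $L^\infty$-bound together with the bounded Laplacian into a pointwise gradient bound.

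For the reduction, I pick a smooth embedding $\Phi = (\phi_1, \ldots, \phi_K) : M^k \hookrightarrow \mathbb{R}^K$ (which exists by Whitney's embedding theorem) and observe that each coordinate $\phi_j \circ F$ satisfies Definition \ref{def:regular} on $B$. For any $p \in B$ I choose a small ball $B' = B(p, r) \Subset B$ with $X \setminus B' \neq \emptyset$; the quantitative hypotheses are met, since $|\Delta(\phi_j \circ F)| \le L'$ follows from regularity of $F$, while $\phi_j \circ F \in L^2(B', \meas) \subset L^1(B', \meas)$ together with $\meas(B') < \infty$ yields the $L^1$-oscillation bound (up to enlarging $L'$). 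Applying the quantitative claim controls the Lipschitz constant of $\phi_j \circ F$ on a sub-ball, and since $\Phi$ is a smooth embedding, $F = \Phi^{-1} \circ (\Phi \circ F)$ inherits the local Lipschitz regularity.

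For the quantitative claim, the first step upgrades the $L^1$-hypothesis to an $L^\infty$-bound. Since $(X,\dist,\meas)$ is an $\RCD(K,N)$ space, it satisfies the local doubling condition from (\ref{eq:bg}) and the $(1,1)$-Poincar\'e inequality (\ref{eq1020}); these structural assumptions are precisely what is needed to run De Giorgi--Moser iteration for sub/super-solutions of $\Delta u = h$ with $h \in L^\infty$. Writing $u := f - \intav_B f\di\meas$ and iterating on $u^+$ and $u^-$ separately gives
\begin{equation*}
\norm{f - \intav_B f\di\meas}_{L^\infty(t'B)} \le C(K, N, L, t'), \qquad \forall\, t' \in (0,1).
\end{equation*}

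The second step converts this $L^\infty$-bound plus $|\Delta f| \le L$ into a Lipschitz bound via the interior gradient estimate for functions with bounded Laplacian on $\RCD(K,N)$ spaces (proved by applying Moser iteration to $|\nabla f|^2$ through the Bochner inequality, together with Mondino--Naber-type cutoff functions having bounded gradient and Laplacian). The condition $X\setminus B\neq\emptyset$ is precisely what guarantees the existence of such cutoffs supported in $B$. Choosing $t < t'$ then produces $|\nabla f| \le C(K, N, L, t)$ on $tB$, as desired. The main technical hurdle is packaging these two classical $\RCD$ tools with the rather weak $L^1$-hypothesis instead of the more common $L^2$- or $L^\infty$-hypothesis, but both ingredients are by now standard in the literature.
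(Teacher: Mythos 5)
Your proof is correct and the overall skeleton matches the paper's: reduce to the scalar case via a smooth embedding, upgrade the $L^1$-oscillation bound to $L^\infty$, then convert $L^\infty$ plus $\|\Delta f\|_{L^\infty}\le L$ into a gradient bound (the paper cites \cite[Theorem 3.1]{Jiang} for this last step rather than re-running Moser iteration on $|\nabla f|^2$ via Bochner, but the content is the same). The genuine difference is in how the $L^1\to L^\infty$ step is organized. You run De Giorgi--Moser iteration directly on sub/super-solutions of $\Delta u=h$ with $h\in L^\infty$, treating $u^{\pm}$ separately. The paper instead decomposes $f=\tilde f+(f-\tilde f)$: it solves the Dirichlet problem $\Delta\tilde f=\Delta f$, $\tilde f\in H^{1,2}_0((1/2)B)$ with the $L^\infty$-bound $\|\tilde f\|_{L^\infty}\le C\|\Delta f\|_{L^\infty}$ from \cite[Theorem 4.1]{BMosco}, then applies the $L^1\to L^\infty$ Moser estimate \emph{for harmonic functions} (\cite[Lemma 4.1]{Jiang}) to the harmonic remainder $f-\tilde f$. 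Your route is the more classical direct iteration and avoids solving an auxiliary boundary-value problem; the paper's route lets it cite two off-the-shelf results instead of reproducing an iteration argument for inhomogeneous equations. Both are valid.

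One inaccuracy worth fixing: you attribute the hypothesis $X\setminus B\neq\emptyset$ to "the existence of such cutoffs supported in $B$." Cutoff functions of Mondino--Naber type exist regardless; the hypothesis is actually needed for a Sobolev inequality of the form $\|u\|_{L^{2^*}}\le C\|\nabla u\|_{L^2}$ for $u\in H^{1,2}_0(B)$ (which fails when $B=X$, e.g.\ for $u\equiv 1$). In the paper this Sobolev inequality is what lets one solve the Dirichlet problem with an $L^\infty$ bound; in your approach it is the Sobolev step inside the Moser iteration that requires it. The assertion in the proposition is stated so that the nonquantitative "locally Lipschitz" part holds always, and the quantitative bound is asserted only under $X\setminus B\neq\emptyset$; your reduction step (shrinking to $B'\Subset B$) handles this correctly, so only the parenthetical justification needs adjusting.
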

\begin{proof}
It is enough to check the assertion for $t=1/32$. Moreover thanks to \cite[Theorem 3.1]{Jiang} (see also \cite[Theorem 3.1]{AmbrosioMondinoSavare2}), it is also  enough to check that if $\int_Bf\di \meas =0$, then
\begin{equation}\label{eqinftty}
\|f\|_{L^{\infty}((1/4)B)}\le C(K, N, L).
\end{equation}
Find $\tilde{f} \in H^{1,2}_0((1/2)B, \dist, \meas) \cap D(\Delta, (1/2)B)$ with $\Delta \tilde{f}=\Delta f$ and 
\begin{equation}\label{eqlinfty}
\|\tilde{f}\|_{L^{\infty}((1/2)B)} \le C(K, N)\|\Delta f\|_{L^{\infty}((1/2)B)}
\end{equation}
(see \cite[Theorem 4.1]{BMosco}), where we used our assumption  $X \setminus  B \neq \emptyset$ (to get a Sobolev inequality, see for example \cite[(4.5)]{Cheeger}). Thus applying \cite[Lemma 4.1]{Jiang} for a harmonic function $f-\tilde{f}$ on $(1/2)B$ shows
\begin{equation}
\|f-\tilde{f}\|_{L^{\infty}((1/4)B)} \le C(K, N)\intav_{(1/2)B}|f-\tilde{f}|\di \meas.
\end{equation}
Combining this with (\ref{eqlinfty}) proves (\ref{eqinftty}).
\end{proof}
As a corollary of Proposition \ref{prop:regulargradient}, we have the following.
\begin{corollary}\label{lem:lipcont}
If a regular map $F=(f_1, \ldots, f_k):4B \to \mathbb{R}^k$ satisfies
\begin{equation}\label{eq:pullbackbd}
\intav_{4B}|\nabla f_i|^2\di \meas +\|\Delta f_i\|_{L^{\infty}}\le L,\quad \forall i,
\end{equation}
and $X \setminus 4B{\color{blue}\neq} \emptyset$,
then $F|_B$ is $C(K, N, L, k)$-Lipschitz, namely
\begin{equation}\label{liplip}
\left|F(y)-F(z)\right|_{\mathbb{R}^k}\le C(K, N, L, k)\dist (y, z),\quad \forall y, z \in B.
\end{equation}
\end{corollary}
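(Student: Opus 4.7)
The strategy is to apply Proposition \ref{prop:regulargradient} componentwise to each $f_i$ on balls of a covering of $2B$, and then upgrade the resulting pointwise gradient bound to a Lipschitz estimate on $B$ by integration along geodesics. The outer layer $4B \setminus 2B$ provides the necessary room to handle the ball-doubling in the Poincar\'e inequality.

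First I would cover $2B$ by finitely many balls $B(y_\alpha, 1/2)$ with $y_\alpha \in 2B$; by the Bishop--Gromov inequality \eqref{eq:bg} the cardinality can be bounded in terms of $K, N$ only. For each $\alpha$, $B(y_\alpha, 2) \subset 4B$ and $X \setminus B(y_\alpha, 1) \supset X \setminus 4B \neq \emptyset$, so the complement hypothesis of Proposition \ref{prop:regulargradient} holds on each $B(y_\alpha, 1)$. For each component $f_i$, the Poincar\'e inequality \eqref{eq1020} together with Jensen and Bishop--Gromov gives
\begin{equation*}
\intav_{B(y_\alpha,1)}\!\left|f_i - \intav_{B(y_\alpha,1)} f_i\,\di\meas\right|\di\meas \le 4e^{|K|}\intav_{B(y_\alpha,2)}|\nabla f_i|\,\di\meas \le C(K,N)\!\left(\intav_{4B}|\nabla f_i|^2\,\di\meas\right)^{1/2}\!\!\le C(K,N,L).
\end{equation*}
Combined with $\|\Delta f_i\|_{L^\infty} \le L$, Proposition \ref{prop:regulargradient} applied with parameter $t = 1/2$ then yields $|\nabla f_i| \le C(K,N,L)$ a.e.\ on $B(y_\alpha, 1/2)$. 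Taking the union over $\alpha$ produces $|\nabla f_i| \le C(K,N,L)$ a.e.\ on $2B$.

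Finally, given $y, z \in B$ and any $w$ on a minimizing geodesic joining them, summing the two triangle inequalities $\dist(x,w) \le \dist(x,y)+\dist(y,w)$ and $\dist(x,w) \le \dist(x,z)+\dist(z,w)$ and using $\dist(y,w)+\dist(w,z) = \dist(y,z)$ gives $2\dist(x,w) \le \dist(x,y)+\dist(x,z)+\dist(y,z) < 4$, so the entire geodesic is contained in $2B$. Since $f_i$ is locally Lipschitz by Proposition \ref{prop:regulargradient} and its minimal weak upper gradient coincides a.e.\ with the local Lipschitz constant in $\RCD$ spaces, integrating along this geodesic yields $|f_i(y)-f_i(z)| \le C(K,N,L)\,\dist(y,z)$. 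Summing in the Euclidean norm over $i = 1,\ldots,k$ produces \eqref{liplip} with $C(K,N,L,k) = \sqrt{k}\, C(K,N,L)$. The main subtle point is this last upgrade: $B$ is not a priori geodesically convex in an $\RCD$ space, so the a.e.\ gradient bound must hold not merely on $B$ but on the enlargement $2B$ capturing every minimizing geodesic between points of $B$; this is exactly why the covering step was set up around $2B$ rather than $B$.
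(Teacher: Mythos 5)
Your proof follows the same skeleton as the paper's --- Poincar\'e plus Proposition \ref{prop:regulargradient} to produce an a.e.\ gradient bound, then an upgrade to a genuine Lipschitz estimate --- but with two noteworthy differences. The covering step is cleaner bookkeeping than the paper's: the paper applies Proposition \ref{prop:regulargradient} directly on $4B$ and invokes the Poincar\'e inequality there, which, read literally as (\ref{eq1020}), produces $\intav_{8B}|\nabla f_i|\,\di\meas$ on the right-hand side, a quantity outside the domain of $F$; one then has to appeal tacitly to the self-improved, same-ball Poincar\'e inequality for geodesic doubling spaces. Your covering of $2B$ by balls $B(y_\alpha,1/2)$ with $B(y_\alpha,2)\subset 4B$ sidesteps this entirely, and you correctly preserve the complement hypothesis $X\setminus B(y_\alpha,1)\supset X\setminus 4B\neq\emptyset$ on each piece. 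Your triangle-inequality argument that every minimizing geodesic between points of $B$ stays inside $2B$ is also correct.

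The final upgrade, however, has a genuine gap. You pass from $|\nabla f_i|\le C$ $\meas$-a.e.\ on $2B$, together with the a.e.\ identity $|\nabla f_i|=\mathrm{lip}(f_i)$, to the claim that one may integrate $\mathrm{lip}(f_i)$ along a fixed minimizing geodesic $\gamma$ joining $y,z\in B$. But $\gamma$ is a $\meas$-null set, so the a.e.\ bound gives no control on $\mathrm{lip}(f_i)(\gamma(t))$ for a.e.\ $t$ --- which is exactly what the upper-gradient integral needs, since the coincidence of the relaxed slope with the pointwise Lipschitz constant is only up to $\meas$-null sets. Converting an a.e.\ relaxed-slope bound into a genuine pointwise Lipschitz bound is precisely the content of the (local) Sobolev-to-Lipschitz property, which is what the paper invokes via \cite[Proposition 1.10]{GV}. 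If you replace the geodesic-integration heuristic with that citation --- or, alternatively, with a telescoping/maximal-function argument in the style of Haj\l asz--Koskela, which yields a Lipschitz bound with a worse constant but that would still be of the form $C(K,N,L)$ --- the proof is complete.
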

\begin{proof}
Since (\ref{eq:pullbackbd}) with the Poincar\'e inequality implies 
\begin{equation}
\intav_{4B}\left| f-\intav_{4B}f\di \meas \right| \di \meas \le C(K, N)\intav_{4B}|\nabla f_i|^2\di \meas \le C(K, N)L,
\end{equation}
applying Proposition \ref{prop:regulargradient} shows 
\begin{equation}
|\nabla f_i|(y) \le C(K, N, L),\quad \forall y \in 3B.
\end{equation}
Then combining this with the local Sobolev-to-Lipschitz property obtained in
\cite[Proposition 1.10]{GV} proves (\ref{liplip})
\end{proof}

In the sequel, in addition, we also fix;
\begin{itemize}
\item a regular map $F=(f_1,\ldots, f_k):B \to \mathbb{R}^k$ for some $k \in \mathbb{N}$;
\item a positive number $\epsilon \in (0, 1)$.
\end{itemize} 
%\begin{theorem}\label{thm:splitting}
%{\color{red}SH:I will write down an almost splitting theorem by a (almost harmonic) regular map.}
%\end{theorem}
%\begin{proposition}\label{prop:sharplip}
%{\color{red}SH:I will write down a fact that any $\epsilon$-splitting regular map is almost $1$-Lipschitz along a same way as in \cite[Remark 3.3]{BrueNaberSemola}}
%\end{proposition}
\begin{definition}[Transformation; definition]
If the matrix of size $k$:
\begin{equation}
G_{F, B}:=\left(\intav_B\langle \nabla f_i, \nabla f_j \rangle \di \meas \right)_{ij}
\end{equation}
is invertible, then the regular map $F_B=(f_{B, 1}, \ldots, f_{B, k}):B \to \mathbb{R}^k$ is defined as follows and called the \textit{canonical transformation} of $F$ on $B$; the Cholesky decomposition guarantees the unique existence of a lower triangular matrix $T_{F, B}$ of size $k$ whose diagonal consist of positive entries such that 
\begin{equation}
(T_{F, B})^t\cdot G_{F, B}\cdot T_{F, B}=E_k,
\end{equation}
where $E_k$ denotes the identity matrix of size $k$ and $A^t$ denotes the transpose of a matrix $A$. Then let 
\begin{equation}
F_B:=F \cdot (T_{F, B})^t.
\end{equation}
\end{definition}

It follows from the definition of $F_B$ that $G_{F_B, B}=E_k$ holds, namely,
\begin{equation}\label{eq:trans}
\left(\intav_B\langle \nabla f_{B, i}, \nabla f_{B, j}\rangle \di \meas\right)_{ij} =E_k.
\end{equation}
In the following lemma, compare with \cite[Lemma 3.15]{BrueNaberSemola}.
\begin{lemma}\label{lem:inductive}
We have the following.
\begin{enumerate}
\item\label{trans} 
If $G_{F, tB}$ and $G_{F_{tB}, sB}$ are invertible for some $s, t \in (0, 1]$, then $G_{F, sB}$ is also invertible with
\begin{equation}\label{eq:identity}
T_{F, sB}=T_{F, tB}\cdot T_{F_{tB}, sB}.
\end{equation}
\item\label{trivial} If 
\begin{equation}
\left|\intav_{B}\langle \nabla f_i, \nabla f_j\rangle -\delta_{ij}\di \meas \right|<\epsilon,\quad \forall i, j,
\end{equation}
then $G_{F, tB}$ is invertible for any $t \in [1/2, 1]$ with 
\begin{equation}
\left|T_{F, tB}-E_k\right| \le C(k)\epsilon.
\end{equation}
In particular if 
\begin{equation}
\intav_{B}\left|\langle \nabla f_i, \nabla f_j\rangle -\delta_{ij}\right|\di \meas <\epsilon,\quad \forall i, j,
\end{equation}
then  for any $t \in [1/2, 1]$
\begin{equation}
\intav_{B}\left| \langle \nabla f_{tB, i}, \nabla f_{tB, j}\rangle -\delta_{ij}\right| \di \meas\le  C(K, N, k)\epsilon,\quad \forall i, j.
\end{equation}
\item\label{induct} If   $G_{F, tB}$ is invertible for some $t \in [1/2, 1]$ with
\begin{equation}\label{small}
\intav_{tB}\left| \langle \nabla f_{tB, i}, \nabla f_{tB, j}\rangle -\delta_{ij}\right|\di \meas <\epsilon,\quad \forall i, j,
\end{equation}
then $G_{F, stB}$ is also invertible for any $s \in [1/2, 1]$ with
\begin{equation}
\intav_{t B}\left| \langle \nabla f_{stB, i}, \nabla f_{st B, j}\rangle -\delta_{ij}\right| \di \meas \le C(K, N, k)\epsilon
\end{equation}
and 
\begin{equation}\label{trans2}
\left| T_{F, tB} \cdot (T_{F, stB})^{-1}-E_k\right| + \left| T_{F, stB} \cdot (T_{F, tB})^{-1}-E_k\right| \le C(K, N, k)\epsilon.
\end{equation}
\end{enumerate}
\end{lemma}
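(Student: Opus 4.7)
All three parts boil down to matrix algebra once one establishes the fundamental transformation formula for the averaged Gram matrix under the change of basis that defines $F_{tB}$. Since each component of $F_{tB}$ is by construction a fixed linear combination of $f_1,\ldots,f_k$ with coefficients read off $T_{F,tB}$, linearity of the gradient together with integration over $sB$ yields the congruence
\begin{equation*}
G_{F_{tB},sB} \;=\; (T_{F,tB})^t\, G_{F,sB}\, T_{F,tB}.
\end{equation*}
This identity, together with Bishop--Gromov, is the only analytic input beyond the Cholesky decomposition itself.

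For part (\ref{trans}), I would plug this identity into the defining Cholesky relation $(T_{F_{tB},sB})^t G_{F_{tB},sB}\,T_{F_{tB},sB}=E_k$ to rewrite it as $(T_{F,tB}\cdot T_{F_{tB},sB})^t\,G_{F,sB}\,(T_{F,tB}\cdot T_{F_{tB},sB})=E_k$. Since a product of lower triangular matrices with positive diagonal is again lower triangular with positive diagonal, uniqueness of the Cholesky factorization forces $T_{F,sB}=T_{F,tB}\cdot T_{F_{tB},sB}$, and in particular $G_{F,sB}$ is invertible.

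For part (\ref{trivial}), the first statement at $t=1$ is immediate from smoothness of the Cholesky map $G\mapsto T$ near $E_k$: if $|G_{F,B}-E_k|\le C(k)\epsilon$ entrywise, then for $\epsilon$ small one obtains $|T_{F,B}-E_k|\le C(k)\epsilon$. For $t\in[1/2,1]$, the Bishop--Gromov comparison $\meas(B)/\meas(tB)\le C(K,N)$ transfers the average bound from $B$ to $tB$ at the cost of a multiplicative factor $C(K,N)$, giving the same conclusion at scale $tB$. For the second (strengthened) statement I would combine $|T_{F,tB}-E_k|\le C(K,N,k)\epsilon$ (from the first) with the bilinear expansion
\begin{equation*}
\langle \nabla f_{tB,i},\nabla f_{tB,j}\rangle - \delta_{ij} \;=\; \sum_{l,m}(T_{F,tB})_{li}(T_{F,tB})_{mj}\bigl(\langle \nabla f_l,\nabla f_m\rangle-\delta_{lm}\bigr) \;+\; \bigl((T_{F,tB})^t T_{F,tB} - E_k\bigr)_{ij},
\end{equation*}
controlling the first term by the strengthened hypothesis (after Bishop--Gromov) and the second by the entrywise closeness of $T_{F,tB}$ to $E_k$.

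For part (\ref{induct}), I would treat $F_{tB}$ as a regular map on the rescaled unit ball $tB$ and apply the second statement of (\ref{trivial}) to it at relative scale $s\in[1/2,1]$. This yields the deviation estimate for $(F_{tB})_{stB}$ on $tB$ together with $|T_{F_{tB},stB}-E_k|\le C(K,N,k)\epsilon$. Identifying $(F_{tB})_{stB}$ with $F_{stB}$ via part (\ref{trans}) (which also gives $T_{F,stB}=T_{F,tB}\cdot T_{F_{tB},stB}$), the estimate (\ref{trans2}) follows by writing
\begin{equation*}
T_{F,stB}(T_{F,tB})^{-1} \;=\; T_{F,tB}\,T_{F_{tB},stB}\,(T_{F,tB})^{-1},
\end{equation*}
and symmetrically for the reverse product.

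\emph{Main obstacle.} The delicate point is the last step of part (\ref{induct}): passing from $|T_{F_{tB},stB}-E_k|\le C\epsilon$ to the conjugated estimate (\ref{trans2}) requires both $|T_{F,tB}|$ and $|T_{F,tB}^{-1}|$ to be controlled by constants depending only on $K,N,k$, since the conjugation by $T_{F,tB}$ must not destroy smallness. Such bounds are not automatic from the hypothesis of part (\ref{induct}) alone; they are meant to be supplied by the intended iterative application of the lemma, in which all factors $T_{F,rB}$ remain close to $E_k$ at every scale, and must be tracked in the bookkeeping. The auxiliary rescaling step -- viewing $F_{tB}$ on $tB$ as a regular map on a unit ball in the rescaled $\RCD$ space $(X,t^{-1}\dist,\meas)$ -- is routine but must be checked to absorb all $t$-dependence into the universal constants $C(K,N)$ for $t\in[1/2,1]$.
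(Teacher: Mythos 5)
Your treatment of (1) and (2) captures what the paper asserts to be elementary: the congruence $G_{F_{tB},sB}=(T_{F,tB})^{t}\,G_{F,sB}\,T_{F,tB}$ (which forces reading the paper's $F_B:=F\cdot(T_{F,B})^{t}$ as $F_B=F\cdot T_{F,B}$ so that it is compatible with the defining relation $(T_{F,B})^{t}G_{F,B}T_{F,B}=E_k$ -- otherwise the transposes do not match, and you have implicitly adopted the corrected convention), uniqueness of the lower-triangular normalization, Cholesky stability near $E_k$, and Bishop--Gromov. The one caveat in your write-up of (2): passing from $B$ to $tB$ by Bishop--Gromov requires the $L^1$ form $\intav_B|\langle\nabla f_i,\nabla f_j\rangle-\delta_{ij}|\di\meas<\epsilon$, not the average-value form, since the latter controls $G_{F,B}$ but says nothing about $G_{F,tB}$ for $t<1$; so the first bullet of (2) should really be read at $t=1$ only, with the extension to $t\in[1/2,1]$ belonging to the strengthened ``in particular'' clause.

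The conjugation issue you flag in (3) is a genuine gap, and it is worth locating precisely. From (1) and (2) one gets directly, and without any bound on $T_{F,tB}$, the gauge-invariant identities $(T_{F,tB})^{-1}T_{F,stB}=T_{F_{tB},stB}$ and $(T_{F,stB})^{-1}T_{F,tB}=T_{F_{stB},tB}$, both within $C(K,N,k)\epsilon$ of $E_k$; these inverse-on-the-left estimates are exactly what drive the iteration bounding $a_m=|(T_{F,2^{-m}B})^{-1}T_{F,2^{-l-n}B}-E_k|_\infty$ in Corollary~\ref{cor:trans}. By contrast (\ref{trans2}) as printed has the inverses on the right, and $T_{F,stB}(T_{F,tB})^{-1}=T_{F,tB}\,T_{F_{tB},stB}\,(T_{F,tB})^{-1}$ is a conjugate of the controlled matrix by $T_{F,tB}$. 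Since the hypothesis (\ref{small}) is invariant under $F\mapsto FA$ whereas $T_{F,tB}$ is not, the right-inverse bound cannot follow from (\ref{small}) alone: a $2\times2$ check with $T_{F,tB}=\begin{pmatrix}1&0\\a&1\end{pmatrix}$, $T_{F_{tB},stB}=\begin{pmatrix}1+\delta_1&0\\\delta_2&1+\delta_3\end{pmatrix}$ gives the entry $a(\delta_1-\delta_3)+\delta_2$ in $T_{F,stB}(T_{F,tB})^{-1}-E_2$, which is unbounded in $a$. So either (\ref{trans2}) should be restated with the inverses on the left, or the hypothesis must be augmented with an a priori bound $|T_{F,tB}-E_k|\le C\epsilon$; in the intended use (Proposition~\ref{thm:transformation}) this extra input is supplied by the outermost hypothesis $\intav_B|\langle\nabla f_i,\nabla f_j\rangle-\delta_{ij}|\di\meas<\epsilon$, which pins $T_{F,B}$ near $E_k$ and allows the conjugation bounds to propagate down the dyadic scales. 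Your ``main obstacle'' is precisely this missing gauge normalization; the rest of your derivation of (3) from (1), (2), the composition identity and rescaling is the correct argument.
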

\begin{proof}
It is elementary to check both (\ref{trans}) and (\ref{trivial}). Moreover (\ref{induct}) is a corollary of them. 
%Under assuming (\ref{small}), for any $s \in [1/2, 1]$ since
%\begin{align}\label{eq:closetoi}
%\intav_{stB} \left|\langle \nabla f_{tB, i}, \nabla f_{tB, j}\rangle -\delta_{ij}\right|\di \meas  & \le \frac{1}{\meas (stB)}\int_{tB}\left|  \langle \nabla f_{tB, i}, \nabla f_{tB, j}\rangle -\delta_{ij}\right| \di \meas \nonumber \\
%&\le C(K, N)\intav_{tB}\left| \langle \nabla f_{tB, i}, \nabla f_{tB, j}\rangle -\delta_{ij}\right|\di \meas \nonumber \\
%&<C(K, N) \epsilon,
%\end{align}
%(\ref{trivial}) shows that $G_{F_{B}, stB}$ is invertible with 
%and that there exists a unique lower triangular matrix $\tilde{T}$ of size $k$ whose diagonal consist of positive entries satisfying
%\begin{equation}\label{eq:Test}
%\left|T_{F_{tB, stB}}-E_k\right|\le C(K, N, k)\epsilon.
%\end{equation}
%Since 
%\begin{equation}
%(T_{F_B, tB})^t\cdot T_{F,tB}^t\cdot G_{F_B, tB} \cdot T_{F, tB}\cdot T_{F_B, tB} =E_k,
%\end{equation}
%the uniqueness of the Cholesky decomposition implies (\ref{eq:identity}).
%Then it follows from (\ref{eq:Test}) and (\ref{eq:identity}) that (\ref{eq:estimatet}) holds.
\end{proof}
\begin{corollary}\label{cor:trans}
For all $s, t \in (0, 1]$ with $s \le t$, if 
$G_{F, \alpha B}$ is invertible for any $\alpha \in [s, t]$ with
\begin{equation}\label{eq:closetoek}
\intav_{\alpha B}\left| \langle \nabla f_{\alpha B, i}, \nabla f_{\alpha B, j}\rangle -\delta_{ij}\right|\di \meas <\epsilon,\quad \forall i, j,
\end{equation}
then we have
\begin{equation}\label{eq:powerest}
\max \left\{ \left| (T_{F, tB})^{-1} \cdot T_{F, sB}\right|_{\infty}, \left| T_{F, sB} \cdot (T_{F, tB})^{-1}\right|_{\infty}  \right\} \le \left(\frac{t}{s}\right)^{C\epsilon},
\end{equation}
where $|\cdot|_{\infty}$ denotes the $L^{\infty}$-norm and $C=C(K, N, k)$.
\end{corollary}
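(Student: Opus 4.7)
The plan is a dyadic iteration of Lemma \ref{lem:inductive}(\ref{induct}) combined with telescoping. First I would choose scales $t=t_0>t_1>\cdots>t_m=s$ with each ratio $t_{j+1}/t_j\in[1/2,1]$ and $m\le \log_2(t/s)+1$; for example $t_j:=2^{-j}t$ for $j=0,\ldots,m-1$ and $t_m:=s$, with $m:=\lceil \log_2(t/s)\rceil$. At every scale $t_j$ the hypothesis (\ref{eq:closetoek}) matches the assumption (\ref{small}) of Lemma \ref{lem:inductive}(\ref{induct}) (after the rescaling $t_j^{-1}\dist$), so for each consecutive pair $(t_j,t_{j+1})$ the lemma yields
\begin{equation*}
\big|T_{F,t_jB}(T_{F,t_{j+1}B})^{-1}-E_k\big|_{\infty}+\big|T_{F,t_{j+1}B}(T_{F,t_jB})^{-1}-E_k\big|_{\infty}\le C\epsilon,
\end{equation*}
with $C=C(K,N,k)$.

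For the bound on $T_{F,sB}(T_{F,tB})^{-1}$ I would telescope directly,
\begin{equation*}
T_{F,sB}(T_{F,tB})^{-1}=\prod_{j=m-1}^{0}T_{F,t_{j+1}B}(T_{F,t_jB})^{-1},
\end{equation*}
bounding each factor by $1+C\epsilon$ in a submultiplicative matrix norm on $\mathbb{R}^{k\times k}$ (equivalent to $|\cdot|_{\infty}$ up to a dimensional constant).

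For $(T_{F,tB})^{-1}T_{F,sB}$ the situation is asymmetric, and this is the main point. A direct telescoping into factors $(T_{F,t_jB})^{-1}T_{F,t_{j+1}B}$ is not immediately controlled by (\ref{trans2}), but the chain identity (\ref{eq:identity}) gives $(T_{F,t_jB})^{-1}T_{F,t_{j+1}B}=T_{F_{t_jB},t_{j+1}B}$, hence
\begin{equation*}
(T_{F,tB})^{-1}T_{F,sB}=\prod_{j=0}^{m-1}T_{F_{t_jB},t_{j+1}B}.
\end{equation*}
Applying Lemma \ref{lem:inductive}(\ref{induct}) with $F_{t_jB}$ in place of $F$ at the pair $(t_j,t_{j+1})$---using $T_{F_{t_jB},t_jB}=E_k$ by construction and that the $L^1$-Gram condition for $F_{t_jB}$ on $t_jB$ is precisely (\ref{eq:closetoek}) at $\alpha=t_j$---I obtain $|T_{F_{t_jB},t_{j+1}B}-E_k|_{\infty}\le C\epsilon$, so this product likewise has norm at most $(1+C\epsilon)^m$.

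Combining the two estimates, both quantities are bounded by $(1+C\epsilon)^m\le \exp(mC\epsilon)\le \exp(C\epsilon)(t/s)^{C\epsilon/\ln 2}$, which, after absorbing the bounded factor $\exp(C\epsilon)$ into a larger exponent constant, yields the claimed bound $(t/s)^{C\epsilon}$. The main obstacle is the asymmetry of (\ref{trans2}); the resolution is to recognize $(T_{F,tB})^{-1}T_{F,sB}$ as itself a Cholesky transformation matrix via (\ref{eq:identity}) and to iterate the same argument along the sequence of normalizations $F_{t_jB}$ rather than along $F$ directly.
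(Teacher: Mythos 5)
Your proposal is correct and follows essentially the same approach as the paper: a dyadic decomposition of $[s,t]$ together with the per-step estimate supplied by Lemma~\ref{lem:inductive}(\ref{induct}), using the chain identity~(\ref{eq:identity}) to recognize $(T_{F,tB})^{-1}T_{F,sB}=T_{F_{tB},sB}$ so that the consecutive factors in the product are all of the controlled form, and then converting $(1+C\epsilon)^m$ into a power of $t/s$. The paper phrases the telescoping as a one-step recursion $a_m\le C\epsilon+(1+C\epsilon)a_{m+1}$ on $a_m:=|(T_{F,2^{-m}B})^{-1}T_{F,2^{-l-n}B}-E_k|_\infty$ using the elementary inequality~(\ref{eq:elemen}), while you bound the product directly via a submultiplicative operator norm and then note $|\cdot|_\infty\le\|\cdot\|$; these are the same estimate in different clothing, and the final ``absorb $e^{C\epsilon}$'' step shares the same mild imprecision near $t/s\approx 1$ that the paper glosses over by reducing to exactly dyadic ratios.
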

\begin{proof}
%Note that (\ref{eq:closetoek}) implies
%\begin{equation}\label{eq;ekcls}
%\left| T_{F, B}-E_k\right| \le C\epsilon.
%\end{equation}
It is enough to consider the case when $s =2^{-l-n}, t=2^{-l}$ for some $l, n \in \mathbb{N}$ because of (\ref{trans2}).
Recall 
\begin{equation}\label{eq:elemen}
\left|A_1A_2-E_k\right|_{\infty} \le \left| A_1-E_k\right|_{\infty} +\left|A_2-E_k\right|_{\infty}+k\left|A_1-E_k\right|_{\infty} \cdot \left|A_2-E_k\right|_{\infty}
\end{equation}
for all matrixes $A_i(i=1,2)$ of size $k$. 
Letting
\begin{equation}
a_m:=\left| (T_{F, 2^{-m}B})^{-1}\circ T_{F, 2^{-l-n}B}-E_k\right|_{\infty}, \quad  \forall m \in [l, l+n] \cap \mathbb{N},
\end{equation}
we have by (\ref{eq:elemen})
\begin{align}
a_m &\le \left| (T_{F, 2^{-m}B})^{-1} \cdot T_{F, 2^{-m-1}B}-E_k\right|_{\infty} +\left| (T_{F, 2^{-m-1}B})^{-1} \cdot T_{F, 2^{-l-n}B}-E_k\right|_{\infty} \nonumber \\
&\,+k\left| (T_{F, 2^{-m}B})^{-1} \cdot T_{F, 2^{-m-1}B}-E_k\right|_{\infty} \cdot \left| (T_{F, 2^{-m-1}B})^{-1}\cdot T_{F, 2^{-l-n}B}-E_k\right|_{\infty} \nonumber \\
&\le C\epsilon +a_{m+1} +k \cdot C \epsilon \cdot a_{m+1}\nonumber \\
&\le C\epsilon + (1+C\epsilon)a_{m+1}
\end{align}
which in particular implies
\begin{equation}\label{eq:polyno}
a_l+1 \le (1+C\epsilon)^n.
\end{equation}
Note that the left-hand-side of (\ref{eq:polyno}) is bounded below by $|(T_{F, 2^{-l}B})^{-1} \cdot T_{F, 2^{-l-n}B} |_{\infty}$.
%\begin{equation}\label{eqiterat}
%T_{F, B}^{-1} \circ T_{F, 2^{-l}B}=\left( T_{F, B}^{-1} \cdot T_{F, 2^{-1}B}\right) \cdots \left( T_{F, 2^{-l+1}B} \cdot T_{F, 2^{-l}B}\right),
%\end{equation}
%applying (\ref{eq:elemen}) with (\ref{eqiterat}) shows
%\begin{align}
%&\left| T_{F, B}^{-1} \cdot T_{F, 2^{-l}B} \right| \nonumber \\
%&= 1+\left| T_{F, B}^{-1} \cdot T_{F, 2^{-l}B} -E_k\right| \nonumber \\
%&\le 1+\left| T_{F, B}^{-1} \cdot T_{F, 2^{-1}B}-E_k\right| +\left| \left( T_{F, 2^{-1}B}^{-1} \cdot T_{F, 2^{-2}B} \right) \cdots \left( T_{F, 2^{-l+1}B} \cdot T_{F, 2^{-l}B}\right)-E_k\right| \nonumber \\
%&\,+k\left| T_{F, B}^{-1} \cdot T_{F, 2^{-1}B}-E_k\right| \cdot \left| \left( T_{F, 2^{-1}B}^{-1} \cdot T_{F, 2^{-2}B} \right) \cdots \left( T_{F, 2^{-l+1}B} \cdot T_{F, 2^{-l}B}\right)-E_k\right| \nonumber \\
%&\le 1+C\epsilon +\left| \left( T_{F, 2^{-1}B}^{-1} \cdot T_{F, 2^{-2}B} \right) \cdots \left( T_{F, 2^{-l+1}B} \cdot T_{F, 2^{-l}B}\right)-E_k\right| \nonumber \\
%&\, + C\epsilon k\left| \left( T_{F, 2^{-1}B}^{-1} \cdot T_{F, 2^{-2}B} \right) \cdots \left( T_{F, 2^{-l+1}B} \cdot T_{F, 2^{-l}B}\right)-E_k\right| \nonumber \\
%&=1+C\epsilon +(1+C\epsilon k ) \cdot \left| \left( T_{F, 2^{-1}B}^{-1} \cdot T_{F, 2^{-2}B} \right) \cdots \left( T_{F, 2^{-l+1}B} \cdot T_{F, 2^{-l}B}\right)-E_k\right|,
%\end{align}
%In particular applying this way iteratively, we have
%\begin{equation}\label{eq:finalest}
%\left| T_{F, B}^{-1} \cdot T_{F, 2^{-l}B} \right| \le \left(1+C\epsilon \right)^l.
%\end{equation}
Since $1+\log 2 \cdot t \le 2^{t}$ holds for any $t \in \mathbb{R}$, the right-hand-side of (\ref{eq:polyno}) is bounded above by $2^{Cn\epsilon}=(t/s)^{C\epsilon}$ which proves
\begin{equation}
\left| (T_{F, tB})^{-1} \cdot T_{F, sB}\right|_{\infty}\le \left(\frac{t}{s}\right)^{C\epsilon}.
\end{equation}
Similarly we get the desired estimate on $|T_{F, sB} \cdot (T_{F, tB})^{-1}|_{\infty}$. Thus we have 
 (\ref{eq:powerest}).
\end{proof}
We are now in a position to introduce the main result in this subsection. Compare with \cite[Proposition 3.13]{BrueNaberSemola} and \cite[Proposition 7.8]{CheegerJiangNaber}. %Although this propoposition will be used later only in the case when $(Z, \dist_Z, \meas_Z)=(\mathbb{S}^{N-1}, \dist_{\mathbb{S}^{N-1}}, \mathcal{H}^{N-1})$, for our best knowledge, let us state the result in a general formulation.
%For any $n \in \mathbb{N}$, we fix $\delta(n) \in (0,1)$ satisfying that for any $k \le n$, if a $k \times k$ matrix $A$ satisfies $|A-E_k| \le \delta_k$, then $A$ is invertible. Moreover we denote by $C_1(k, -1, N, r)$ a constant $C(k, -1, N, r)$ appeared in Corollary \ref{cor:trans} and then let $C_2(n,N,r)=\max_{k \le n}C_1(k, -1, N, r)$.
\begin{proposition}[Transformation; existence]\label{thm:transformation}
If $(X, \dist, \meas)$ is an $\RCD(-\epsilon, N)$ space with
\begin{equation}
\dist_{\mathrm{pmGH}}\left(\left( X, \frac{1}{t}\dist, \frac{1}{\meas (tB)}\meas, x\right), \left(\mathbb{R}^n,  \dist_{\mathbb{R}^n}, \frac{1}{\omega_n}\mathcal{H}^n, 0_n\right)\right)<\epsilon, \quad \forall t \in (0, 1]
\end{equation}
for some $n \in \mathbb{N}$ and it holds that 
\begin{equation}
\intav_{B}\left|\langle \nabla f_i, \nabla f_j\rangle -\delta_{ij}\right| \di \meas<\epsilon,  \quad \|\Delta f_i\|_{L^{\infty}}\le L,\quad \quad \forall i, j,
\end{equation}
then  $G_{F, tB}$ is invertible for any $t \in (0, 1]$ with 
\begin{equation}\label{eq:transformat}
\intav_{3tB}\left| \langle\nabla f_{tB, i}, \nabla f_{tB, j}\rangle-\delta_{ij}\right|\di \meas <\Psi,
\end{equation}
where $\Psi =\Psi (\epsilon| N, L, k)$.
In particular on $B$
\begin{equation}\label{eq:laptrans}
t|\Delta f_{tB, i}| \le C(N, L, k)t^{1-\Psi}, \quad \forall i.
\end{equation}
\end{proposition}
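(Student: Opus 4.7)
I would prove \eqref{eq:transformat} by an inductive dyadic iteration in scale, each step secured by a compactness/contradiction argument on Euclidean limits. The base case is immediate from Lemma \ref{lem:inductive}(\ref{trivial}): the hypothesis gives invertibility of $G_{F,B}$ and the bound $\intav_{tB}|\langle \nabla f_{tB,i}, \nabla f_{tB,j}\rangle-\delta_{ij}|\,\di \meas \le C(K,N,k)\epsilon$ on every $tB$ with $t\in[1/2,1]$. I then argue inductively that for each dyadic $t = 2^{-m}$, $G_{F,2^{-m}B}$ is invertible and the analogous $L^1$ closeness holds with a single $\Psi = \Psi(\epsilon|N,L,k)$ independent of $m$. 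A naive iteration of Lemma \ref{lem:inductive}(\ref{induct}) alone would only yield the exponentially bad bound $C(K,N,k)^m\epsilon$, so the induction step requires the sharper single-scale improvement described below. The bound on $3tB$ is extracted from that on $2^{-m}B$ for a suitable $m$ via Corollary \ref{cor:trans}, which controls the relation between $f_{tB,i}$ and $f_{2^{-m}B,i}$.

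\textbf{Key (single-scale) improvement.} The heart of the matter is to show that on a ball pmGH-close to Euclidean at every subscale, the transformed gradient Gram of a regular map with bounded Laplacian drifts from $E_k$ by at most $\Psi(\epsilon)$ \emph{in total} from scale $1$ down to scale $t$. I plan to prove this by contradiction: suppose it fails along a sequence $(X_j, \dist_j, \meas_j, x_j)$ of $\RCD(-\epsilon_j, N)$ spaces with $\epsilon_j \to 0$, all $\epsilon_j$-pmGH-close to $(\mathbb{R}^n, \dist_{\mathbb{R}^n}, \omega_n^{-1}\mathcal{H}^n, 0_n)$ at every scale, with regular maps $F_j$ satisfying the hypotheses and offending scales $t_j$. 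Rescaling by $t_j^{-1}$, the limit space is $\mathbb{R}^n$, and the rescaled Laplacians of the transformed coordinates are bounded by $L\,t_j^2\,|T_{F_j,t_jB_j}|_\infty$, which tends to zero because Corollary \ref{cor:trans} yields $|T_{F_j,t_jB_j}|_\infty \le t_j^{-C\epsilon_j}$. The $L^2$-strong stability of Sobolev functions under pmGH-convergence \cite{AmbrosioHonda, AmbrosioHonda2, GigliMondinoSavare13} then provides $H^{1,2}$-strong subconvergence of the transformed functions to harmonic limits on $\mathbb{R}^n$ whose gradient Gram has unit-ball $L^1$-average $E_k$. A standard rigidity argument---integrating the identity $\langle \nabla h_i, \nabla h_j\rangle = \tfrac{1}{2}\Delta(h_ih_j)$ against cutoffs and using harmonicity, together with the fact that the same near-Euclidean hypothesis persists at every subscale---forces the limits to be affine functions with orthonormal gradients, so the limit Gram is identically $\delta_{ij}$, contradicting the supposed defect.

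\textbf{Laplacian estimate and main obstacle.} Once \eqref{eq:transformat} is established, \eqref{eq:laptrans} follows by linearity of $\Delta$ and Corollary \ref{cor:trans}: $|\Delta f_{tB, i}| \le \sum_j |(T_{F, tB})_{ji}|\,|\Delta f_j| \le k\,|T_{F,tB}|_\infty\, L$, and $|T_{F,tB}|_\infty \le C(k)\,t^{-C(N,k)\Psi}$, whence $t|\Delta f_{tB,i}| \le C(N,L,k)\,t^{1-\Psi}$. The main obstacle I expect is the rigidity/classification step: one must verify that the $H^{1,2}$-strong limit actually has Gram matrix $E_k$ pointwise, not merely in $L^1$-average, so that no defect can survive in the contradiction. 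This rests on the Liouville-type rigidity of harmonic maps on $\mathbb{R}^n$ under integral normalization, applied uniformly at every scale---exactly where the hypothesis of Euclidean closeness at all scales $t \in (0,1]$ (rather than at a single scale) is needed.
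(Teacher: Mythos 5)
Your overall architecture matches the paper's: a contradiction argument, rescaling to an extremal ``offending'' scale, showing via Corollary \ref{cor:trans} that the rescaled Laplacians vanish, passing to a harmonic $H^{1,2}$-strong limit on $\mathbb{R}^n$, and contradicting the marginal defect at that scale. Your ``inductive dyadic iteration with a compactness-secured step'' and the paper's ``infimum of good scales'' are two descriptions of the same mechanism.

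The gap is in the rigidity step. A harmonic map $h = (h_1,\dots,h_k)$ on $\mathbb{R}^n$ with unit-ball Gram average $\intav_{B_1}\langle\nabla h_i,\nabla h_j\rangle\di\mathcal{H}^n = \delta_{ij}$ need not be affine: on $\mathbb{R}^2$ take $h = x + c(x^2 - y^2)$ with $c$ chosen to normalize. Integrating $\langle\nabla h_i,\nabla h_j\rangle = \tfrac12\Delta(h_ih_j)$ against cutoffs only rewrites the same average; it gives no pointwise information. What actually forces linearity in the paper is a separate \emph{sub-quadratic growth bound}: $\abs{\tilde f_{i,j}(\tilde y)} \le C(N)\,\tilde\dist_i(\tilde x_i,\tilde y)^{1+C(N,k)\delta}$ for $\tilde\dist_i(\tilde x_i,\tilde y)\in[1,R]$, which survives in the limit and, combined with the Liouville theorem for harmonic functions of polynomial growth of order $<2$, yields linearity. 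That bound is derived by applying Corollary \ref{cor:trans} to control $\abs{T_{F_i,2t_iB_i}\cdot T_{F_i,4\alpha_iB_i}^{-1}}_\infty$ at the intermediate scales $\alpha_i\in[2t_i,2Rt_i]$, plus the local Lipschitz bound of Corollary \ref{lem:lipcont} at each such scale; it uses the maximality of $t_i$ essentially (on $[t_i,1]$ the transformed Gram is $\delta$-close, so the exponent in Corollary \ref{cor:trans} is $C(N,k)\delta$, not $C\epsilon_j$ as you wrote). You gesture at ``the hypothesis of Euclidean closeness at every subscale,'' but that controls the \emph{space}, not the growth of the \emph{functions}; the latter is the missing ingredient. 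Two smaller slips: the rescaled Laplacian is $2t_i\,\abs{T_{F_i,2t_iB_i}}_\infty\cdot\max_l\abs{\Delta_i f_{i,l}}$, first order in $t_i$, not $t_i^2$; and one should also check $t_i\to 0$ (the paper does this via Bishop--Gromov) before the rescaled limit is known to be $\mathbb{R}^n$ at unit scale.
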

\begin{proof}
The proof is done by a contradiction. Assume that the assertion is not satisfied. Then there exist a small
$\delta \in (0, 1)$ (the smallness depends only on $N$ and $k$)
and sequences of;
\begin{enumerate}
\item $\epsilon_i \to 0^+$;
\item pointed $\RCD(-\epsilon_i, N)$ spaces $(X_i, \dist_i, \meas_i, x_i)$ with 
\begin{equation}\label{eq:conveucld}
\dist_{\mathrm{pmGH}}\left(\left( X_i, \frac{1}{t}\dist_i, \frac{1}{\meas_i (tB_i)}\meas_i, x_i\right), \left(\mathbb{R}^n,  \dist_{\mathbb{R}^n}, \mathcal{H}^n, 0_n\right)\right)<\epsilon_i, \quad \forall t \in (0, 1],
\end{equation}
where $B_i=B(x_i, 1)$;
\item regular maps $F_i=(f_{i,1}, \ldots, f_{i, k}):B_i \to \mathbb{R}^k$ with
\begin{equation}
\intav_{B_i}\left|\langle \nabla f_{i, j}, \nabla f_{i, l}\rangle -\delta_{jl}\right| \di \meas<\epsilon_i, \quad \|\Delta_i f_{i, l}\|_{L^{\infty}}\le L,\quad \forall j, l
\end{equation}
\end{enumerate}
such that if $t_i$ denotes the infimum of $t \in (0, 1]$ satisfying
\begin{equation}
\intav_{3sB_i}\left| \langle\nabla f_{sB_i, j}, \nabla f_{sB_i, l}\rangle-\delta_{jl}\right|\di \meas <\delta, \quad \forall s \in [t, 1] \cap (0, 1/3],
\end{equation}
then
\begin{equation}
t_i>0
\end{equation}
holds, where the canonical transformation of $F_i$ on $sB_i$ is denoted by $F_{sB_i}=(f_{sB_i, 1},\ldots, f_{sB_i, k})$. Note that by definition of $t_i$, we see that
\begin{equation}
\intav_{3t_iB_i}\left| \langle\nabla f_{t_iB_i, j}, \nabla f_{t_iB_i, l}\rangle-\delta_{jl}\right|\di \meas \le \delta,\quad \forall j,l,
\end{equation}
(thus in particular $G_{F_i, t_iB_i}$ is invertible because of Lemma \ref{lem:inductive}) and that 
\begin{equation}\label{eq:notinvertible}
\intav_{3t_iB_i}\left| \langle\nabla f_{t_iB_i, j(i)}, \nabla f_{t_iB_i, l(i)}\rangle-\delta_{j(i)l(i)}\right|\di \meas =\delta
\end{equation}
for some $j(i), l(i)$, because of the continuity of the function;
\begin{equation}
s \mapsto \left(\intav_{3sB_i}\left| \langle\nabla f_{sB_i, j}, \nabla f_{sB_i, l}\rangle-\delta_{jl}\right|\di \meas \right)_{jl}.
\end{equation}

First let us prove:
\begin{equation}\label{eq:convzero}
t_i \to 0,\quad (i \to \infty).
\end{equation}
Since for any $t \in (0, 1]$ we have
\begin{align}\label{eq:monotone}
\intav_{tB_i}\left|\langle \nabla f_{i, j}, \nabla f_{i, l}\rangle -\delta_{jl}\right|\di \meas &\le \frac{\meas_i (B_i)}{\meas_i (tB_i)}\intav_{B_i}\left|\langle \nabla f_{i, j}, \nabla f_{i, l}\rangle -\delta_{jl}\right|\di \meas \nonumber \\
&\le C(N) \cdot \frac{1}{t^N}  \cdot \epsilon_i,
\end{align}
where we used the Bishop-Gromov inequality (\ref{eq:bg}), if $t \ge \epsilon_i^{1/(2N)}$, then (\ref{eq:monotone}) implies
\begin{equation}
\intav_{tB_i}\left|\langle \nabla f_{i, j}, \nabla f_{i, l}\rangle -\delta_{jl}\right|\di \meas \le C(N) \cdot \epsilon_i^{1/2} \to 0
\end{equation}
which proves (\ref{eq:convzero}) because of Lemma \ref{lem:inductive}.

Next let us consider rescaled spaces and maps;
\begin{equation}\label{eq:rescale}
(\tilde{X}_i, \tilde{\dist}_i, \tilde{\meas}_i, \tilde{x}_i):=\left(X_i, \frac{1}{2t_i}\dist_i, \frac{1}{\meas_i(2t_iB_i)}\meas_i, x_i\right),
\end{equation}
\begin{equation}
\tilde{F}_i:=\frac{1}{2t_i} \cdot \left(F_{2t_iB_i}-F_{2t_iB_i}(x_i) \right),
\end{equation}
where we will use similar notations for rescaled objects, e.g. $\tilde{\Delta}_i$ for the Laplacian and $\tilde{B}_i$ for the unit ball centered at $\tilde{x}_i$ etc. on the rescaled space (\ref{eq:rescale}).

Denoting by $\tilde{F}_i=(\tilde{f}_{i, 1},\ldots, \tilde{f}_{i, k})$ and $\tilde{B}_i=\tilde{B} (\tilde{x}_i, 1) (=2t_iB_i)$, $\tilde{\Delta}_i\tilde{f}_{i, j}$ can be estimated uniformly on $(2t_i)^{-1}\tilde{B}_i (=B_i)$ as follows;
\begin{align}\label{eq:almostharm}
\left| \tilde{\Delta}_i\tilde{f}_{i, j}\right| = 2t_i\left|\Delta_if_{2t_iB_i, j} \right| &\le k \cdot 2t_i \cdot |T_{F_i, 2t_iB_i}|_{\infty} \cdot \max_l |\Delta_i f_{i, l}| \nonumber \\
&\le k \cdot 2t_i \cdot \left(t_i\right)^{-C(N, k)\delta}  \cdot L \to 0,\quad (i \to \infty),
\end{align}
where we used Corollary \ref{cor:trans} and a fact that $C(N, k)\delta<1$.

Next let us prove that 
\begin{itemize}
\item for any $R \in [1, \infty)$, there exists $i_0 \in \mathbb{N}$ such that for all $i \ge i_0$ and  $\tilde{y}_i \in (2t_i)^{-1}\tilde{B}_i (=B_i)$ with $1 \le \tilde{\dist}_i(\tilde{x}_i, \tilde{y}_i)\le R$, we have 
\begin{equation}\label{eq:lineargrwoth}
\left|\tilde{f}_{i, j}(\tilde{y}_i)\right| \le C(N)\tilde{\dist}_i(\tilde{x}_i, \tilde{y}_i)^{1+C(N, k)\delta}.
\end{equation}
\end{itemize}
The proof is as follows.
Letting $\alpha_i:=\dist_i(x_i, \tilde{y}_i) =2t_i\tilde{\dist}_i(\tilde{x}_i, \tilde{y}_i) (\in [2t_i, 2Rt_i])$, we have
\begin{align}\label{al:growth}
|\tilde{f}_{i, j}(\tilde{y}_i)| &= \frac{1}{2t_i}\left|F_{2t_iB_i}(\tilde{y}_i)-F_{2t_iB_i}(x_i)\right| \nonumber \\
&\le \frac{1}{2t_i}\left| T_{F_i, 2t_iB_i}\cdot T_{F_i, 4\alpha_iB_i}^{-1}\right|_{\infty} \cdot \left| F_{4\alpha_iB_i}(\tilde{y}_i)-F_{4\alpha_iB_i}(x_i)\right| \nonumber\\
&\le \frac{1}{2t_i}\cdot \left(\frac{2\alpha_i}{t_i}\right)^{C(N, k)\delta} \cdot \left| F_{4\alpha_iB_i}(\tilde{y}_i)-F_{4\alpha_iB_i}(x_i)\right|.
\end{align}
On the other hand, by definition of $t_i$ with 
Corollary \ref{cor:trans}, we have
\begin{align}\label{ee}
4\alpha_i |\Delta_i f_{4\alpha_iB_i, j}(\tilde{y}_i)| &\le  4\alpha_i \cdot |T_{F_i,  4\alpha_iB_i}|_{\infty} \cdot k \cdot \max_l |\Delta_i f_{i, l}(\tilde{y}_i)| \nonumber \\
&\le 4kL\alpha_i^{1-C(N, k)\delta}\le 4kL(2Rt_i)^{1-C(N,k)\delta}.
\end{align}
Consider the rescaled objects again;
\begin{equation}\label{100}
\left( \hat{X}_i, \hat{\dist}_i, \hat{\meas}_i\right):=\left( X_i, \frac{1}{4\alpha_i}\dist_i, \frac{1}{\meas_i(4\alpha_iB_i)}\meas_i\right), \quad \hat{F}_i:= \frac{1}{4\alpha_i} \cdot F_{4\alpha_iB_i}
\end{equation}
Then applying Corollary \ref{lem:lipcont}  for (\ref{100}) shows that there exists $i_0 \in \mathbb{N}$ such that for any $i \ge i_0$ we have
\begin{equation}
\left| F_{4\alpha_iB_i}(\tilde{y}_i)-F_{4\alpha_iB_i}(x_i)\right|\le C(N) \cdot \dist_i(x_i, \tilde{y}_i).
\end{equation}
Thus the right hand side of (\ref{al:growth}) is bounded above by
\begin{equation}
\frac{1}{2t_i} \cdot \tilde{\dist}_i(x_i, y_i)^{C(N, k)\delta} \cdot C(N) \cdot \dist_i(x_i, y_i) = C(N) \cdot \tilde{\dist}_i(x_i, y_i)^{1+C(N,k)\delta}
\end{equation}
which proves (\ref{eq:lineargrwoth}).

%Finally let us go to the final stage in order to get a contradiction. It is enough to consider the case when $M>1$ because the desired contradiction in the case when $M=1$ comes from the same argument with the characterization of $1$-dimensional $\RCD(K, N)$ spaces (see \cite{}).

Recalling by  (\ref{eq:conveucld}) 
\begin{equation}
(\tilde{X}_i, \tilde{\dist}_i, \tilde{\meas}_i, \tilde{x}_i) \stackrel{\mathrm{pmGH}}{\to} \left(\mathbb{R}^n, \dist_{\mathbb{R}^n}, \frac{1}{\omega_n}\mathcal{H}^n, 0_n\right),
\end{equation}
applying stability results in \cite[Theorem 4.4]{AmbrosioHonda2} with  (\ref{eq:almostharm}) and (\ref{eq:lineargrwoth}), after passing to a subsequence, there exist a harmonic map $\tilde{F}=(\tilde{f}_1, \ldots, \tilde{f}_k):\mathbb{R}^n \to \mathbb{R}^k$ such that $\tilde{f}_{i, j}$ $H^{1,2}_{\mathrm{loc}}$-strongly converge to $\tilde{f}_j$ for any $j$ and that for any $w \in \mathbb{R}^n$ with $|w|_{\mathbb{R}^n}\ge 1$,
\begin{equation}
|\tilde{f}_j(w)| \le C(N)|w|_{\mathbb{R}^n}^{1+C(N, k)\delta}.
\end{equation}
Note that any harmonic function on $\mathbb{R}^n$ with  at most $(1+ \alpha)$-polynomial growth for some $\alpha \in [0, 1)$ is linear.
Thus $\tilde{F}$ is linear because of  $C(N, k)\delta<1$.

On the other hand, denoting by $B_{\mathbb{R}^n}:=B(0_n, 1)$, (\ref{eq:trans}) yields 
\begin{align}
\left({\color{blue}\intav_{B_{\mathbb{R}^n}}}\langle \nabla \tilde{f}_j, \nabla \tilde{f}_l\rangle \di \mathcal{H}^N\right)_{jl}=\lim_{i \to \infty}\left(\int_{\tilde{B}_i}\langle \nabla \tilde{f}_{i, j}, \nabla \tilde{f}_{i, l}\rangle \di \tilde{\meas}_i\right)_{jl} =E_k.
\end{align}
In particular 
\begin{equation}
\left( \langle \nabla \tilde{f}_j, \nabla \tilde{f}_l\rangle\right)_{jl}\equiv E_k.
\end{equation}
Thus since 
\begin{align}
\left(\intav_{{\color{blue}3}t_iB_i} \left| \langle \nabla f_{2t_iB_i, j}, \nabla f_{2t_iB_i, l}\rangle -\delta_{jl} \right| \di \meas_i\right)_{jl} \to \left(\intav_{({\color{blue}3}/2)B_{\mathbb{R}^n}}\left|\langle \nabla \tilde{f}_j, \nabla \tilde{f}_l\rangle-\delta_{jl}\right|\di \mathcal{H}^N\right)_{jl}=0,
\end{align}
applying Lemma \ref{lem:inductive} shows that 
\begin{equation}
\left(\intav_{{\color{blue}3}t_iB_i}\left|\langle \nabla f_{t_iB_i, j}, \nabla f_{t_iB_i, l}\rangle -\delta_{jl}\right|\di \meas_i\right)_{jl} \to 0
\end{equation}
which contradicts  (\ref{eq:notinvertible}).

The remaining statement, (\ref{eq:laptrans}), is a corollary of (\ref{eq:transformat}) with Corollary \ref{cor:trans}.
\end{proof}
\begin{remark}
As discussed in \cite[Section 7]{CheegerJiangNaber}, the arguments above still work if we replace $\mathbb{R}^n$ by a metric measure cone (e.g. the half space $\mathbb{R}^n_+$, see \cite[Lemma 3.14]{BrueNaberSemola}), or more generally by a family of metric measure cones having no non-linear harmonic functions with at most $(1+\alpha)$-polynomial growth, for a fixed $\alpha \in (0, \infty)$.
\end{remark}
\begin{remark}
Let us consider Proposition \ref{thm:transformation} in a typical case, namely $(X, \dist, \meas)$ is isometric to $(\mathbb{R}^n, \dist_{\mathbb{R}^n}, \mathcal{H}^n)$. Then it is known that if $\|\Delta f\|_{L^{\infty}}<\infty$ holds, then $f$ is $C^1$. Thus in this typical case the conclusion of the theorem above is trivial. Note that this $C^1$-regularity is, for example, a direct consequence of the \textit{$L^p$-Calder\'on-Zygmund inequality} (denoted by $\mathrm{CZ}(p)$ for short) for large $p \in (1, \infty)$;
\begin{equation}\label{eq:cz}
\|\mathrm{Hess}_f\|_{L^p} \le C \left(\|\Delta f\|_{L^p} + \|\nabla f\|_{L^p}\right)
\end{equation}
for any $f \in C^{\infty}_c(\mathbb{R}^n)$ (note that the $\mathrm{CZ}(p)$ is valid for any closed Riemannian manifold and any $p \in (1, \infty)$ for some constant $C \in (0, \infty)$ which depends on $p$ and $g$). However in general for $\RCD(K, N)$ spaces, the continuity of $|\nabla f|$ is not satisfied even for harmonic functions. Namely, such $C^1$-regularities do not hold on general $\RCD(K, N)$ spaces. Along this direction, it is also known that in general the $\mathrm{CZ}(p)$ is not satisfied for $p>2$ in the $\RCD$ setting. See \cite[Corollary 1.3]{DZ} (and see also \cite{HMRV, L, MV, P}).
\end{remark}

\section{Locally Lipschitz-H\"older embedding via regular map}\label{section4}
Throughout this section we will fix;
\begin{itemize}
\item a pointed $\RCD(K, N)$ space $(X, \dist, \meas, x)$ with the essential dimension $n$ for some $K \in \mathbb{R}$ and some $N \in [1, \infty)$. Denote by $g$ the canonical Riemannian metric of $(X, \dist, \meas)$;
\item positive numbers $k, l \in \mathbb{N}$ and $L \in [1, \infty)$;
\item positive small numbers $\epsilon, r \in (0, 1)$; 
\item an open ball $B=B(x, 1)$.
\end{itemize}
\subsection{Pull-back and regular splitting map}
\begin{definition}[Pull-back]\label{definition}
For any  Lipschitz map $F:A \to M^k$ from a Borel subset $A$ of $X$ to a $k$-dimensional (not necessarily complete) Riemannian manifold $(M^k, g_{M^k})$, the \textit{pull-back} $F^*g_{M^k}$ by $F$ is defined by
\begin{equation}\label{eq:defpull}
F^*g_{M^k}=\sum_{i=1}^l\dist \tilde{f}_i\otimes \dist \tilde{f}_i \in L^{\infty}((T^*)^{\otimes 2}(A, \dist, \meas)),
\end{equation}
where $\Phi:M^k \hookrightarrow \mathbb{R}^l$ is a smooth isometric (namely, $\Phi^*g_{\mathbb{R}^l}=g_{M^k}$) embedding and $\Phi \circ F=(\tilde{f}_1,\ldots, \tilde{f}_l)$.
\end{definition}
In the definition above, it is easy to see that the pull-back does not depend on the choice of isometric embeddings $\Phi$, thus (\ref{eq:defpull}) is well-defind (see also \cite{Honda, HS}).
The main purpose of this section is to discuss the case when $M^k=\mathbb{R}^k$. In particular we study a regular map $F:B \to \mathbb{R}^k$ satisfying;
\begin{equation}\label{eqalmosti}
\intav_{B}\left|g-F^*g_{\mathbb{R}^k}\right|\di \meas<\epsilon.
\end{equation}
In connection with this motivation, let us give a simple observation for \textit{splitting maps} whose 

original definition in the smooth framework can be found in \cite[Theorem 6.62]{CheegerColding} and \cite[Lemma 1.23]{Colding} for harmonic maps into Euclidean spaces (see \cite[Definition 0.1]{BPS} in the RCD framework).

For our purpose, we will deal with a slightly generalized formulation as discussed in the sequel (in particular maps are not necessary to be harmonic). Roughly speaking the following proposition tells us that splitting maps provide typical examples of (\ref{eqalmosti}).
\begin{proposition}\label{prop:splittingexample}
If $(X, \dist, \meas)$ is an $\RCD(-\epsilon, N)$ space and a map $F=(f_1, \ldots, f_n):6nB \to \mathbb{R}^n$ satisfies $\Delta f_i \in D(\Delta, 6nB)$ (recall that the essential dimension is denoted by $n$),
\begin{equation}
\intav_{6nB}\left((\Delta f_i)^2+\left| \langle \nabla f_i, \nabla f_j\rangle -\delta_{ij}\right|\right)\di \meas<\epsilon, \quad \|\nabla f_i\|_{L^{\infty}} \le L, \quad \forall i, j,
\end{equation}
then it holds that
\begin{equation}\label{eq:l1close}
\intav_{B}\left|g-F^*g_{\mathbb{R}^n}\right|\di \meas<\Psi
\end{equation} 
and that $F|_{B}$ gives a $\Psi$-Gromov-Hausdorff approximation to the image which is also $\Psi$-Hausdorff close to $B(F(x), 1)$,
where  $\Psi=\Psi(\epsilon| N, L)$.
\end{proposition}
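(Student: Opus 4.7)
I would first exploit the essential-dimension assumption to obtain, via (\ref{eq:essdim}), $|g|^{2}\equiv n$ $\meas$-a.e. Combining this with the pointwise identities $|F^{*}g_{\mathbb{R}^{n}}|^{2}=\sum_{i,j}\langle\nabla f_{i},\nabla f_{j}\rangle^{2}$ and $\langle g,F^{*}g_{\mathbb{R}^{n}}\rangle=\sum_{i}|\nabla f_{i}|^{2}$, which follow immediately from Definition \ref{definition}, a direct expansion of $|g-F^{*}g_{\mathbb{R}^{n}}|^{2}$ produces the pointwise algebraic identity
\begin{equation*}
|g-F^{*}g_{\mathbb{R}^{n}}|^{2}\;=\;\sum_{i,j=1}^{n}\bigl(\langle\nabla f_{i},\nabla f_{j}\rangle-\delta_{ij}\bigr)^{2}\qquad\meas\text{-a.e.}
\end{equation*}
The $L^{\infty}$ bound $\|\nabla f_{i}\|_{L^{\infty}}\le L$ bounds each factor by $1+L^{2}$, so the right-hand side is dominated by a multiple of $\sum_{i,j}|\langle\nabla f_{i},\nabla f_{j}\rangle-\delta_{ij}|$. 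Averaging over $B$, applying Cauchy-Schwarz, and enlarging $B$ to $6nB$ via the Bishop-Gromov comparison (\ref{eq:bg}) then yields (\ref{eq:l1close}) with $\Psi$ of order $\sqrt{\epsilon}$.

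\textbf{Step 2 (set-up of the contradiction).} For the Gromov-Hausdorff and Hausdorff-closeness claims I plan a compactness-and-contradiction argument. Assuming the conclusion fails produces sequences $\epsilon_{i}\to 0^{+}$, pointed $\RCD(-\epsilon_{i},N)$ spaces $(X_{i},\dist_{i},\meas_{i},x_{i})$ of essential dimension $n$, and regular maps $F_{i}=(f_{i,1},\ldots,f_{i,n})$ on $6nB_{i}$ fulfilling the hypotheses with $\epsilon_{i}$ in place of $\epsilon$, but, at some fixed threshold $\eta>0$, either failing to be an $\eta$-GH approximation on $B_{i}$ or producing an image that is not $\eta$-Hausdorff close to $B(F_{i}(x_{i}),1)$. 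By the pre-compactness of the $\RCD(-\epsilon_{i},N)$ class and the Arzel\`a-Ascoli theorem (the equi-Lipschitz bound on $F_{i}$ coming from $\|\nabla f_{i,j}\|_{L^{\infty}}\le L$), after a subsequence one has
\begin{equation*}
(X_{i},\dist_{i},\meas_{i},x_{i})\stackrel{\mathrm{pmGH}}{\longrightarrow}(X_{\infty},\dist_{\infty},\meas_{\infty},x_{\infty})\quad\text{and}\quad F_{i}\to F_{\infty}\ \text{uniformly},
\end{equation*}
with $X_{\infty}$ an $\RCD(0,N)$ pointed space and $F_{\infty}=(f_{\infty,1},\ldots,f_{\infty,n})$ Lipschitz.

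\textbf{Step 3 (stability, dimension, splitting).} Using the stability of Sobolev functions and Laplacians under pmGH convergence (see \cite{AmbrosioHonda,AmbrosioHonda2,GigliMondinoSavare13}), the $L^{2}$-smallness of $\Delta_{i}f_{i,j}$ passes to $\Delta_{\infty}f_{\infty,j}\equiv 0$ and entails $H^{1,2}$-strong convergence of $f_{i,j}$ to $f_{\infty,j}$, through which the $L^{1}$-smallness of $\langle\nabla f_{i,j},\nabla f_{i,k}\rangle-\delta_{jk}$ upgrades to the pointwise equality $\langle\nabla f_{\infty,j},\nabla f_{\infty,k}\rangle\equiv\delta_{jk}$. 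To block a jump in essential dimension, observe that $g_{i}\rightharpoonup g_{\infty}$ in $L^{2}_{\mathrm{loc}}$ while $|g_{i}|^{2}\equiv n$; weak $L^{2}$-lower semi-continuity, together with (\ref{eq:essdim}) and (\ref{eq:loweress}), pins $|g_{\infty}|^{2}\equiv n$ $\meas_{\infty}$-a.e., so $\dim(X_{\infty})=n$. On the $\RCD(0,N)$ space $X_{\infty}$ the Bochner inequality annihilates the Hessians of the unit-gradient harmonic $f_{\infty,j}$, and iterating Gigli's splitting theorem along these $n$ pairwise orthonormal parallel directions produces an isometric splitting $X_{\infty}\cong\mathbb{R}^{n}\times Y$; the matched essential dimension $n$ forces $Y$ to be a point. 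Hence, after a positive rescaling of $\meas_{\infty}$, $X_{\infty}$ is $\mathbb{R}^{n}$ and $F_{\infty}$ is a Euclidean isometry; in particular $F_{\infty}|_{B(x_{\infty},1)}$ is an exact isometry onto $B(F_{\infty}(x_{\infty}),1)$.

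\textbf{Step 4 (closing the contradiction; main obstacle).} Transporting this limit information back through the uniform convergence $F_{i}\to F_{\infty}$ and the pmGH convergence shows that for all sufficiently large $i$, $F_{i}|_{B_{i}}$ is an $(\eta/2)$-GH approximation onto its image and that image is $(\eta/2)$-Hausdorff close to $B(F_{i}(x_{i}),1)$, contradicting the choice of $\eta$. The delicate point is the essential-dimension control of $X_{\infty}$ in Step 3: without pinning $\dim(X_{\infty})$ at $n$ one would only get $X_{\infty}\cong\mathbb{R}^{n}\times Y$ with $Y$ potentially non-trivial, and $F_{\infty}$ would cease to be an isometry, so the sharp Lipschitz behaviour of $F_{i}$ required for the GH-approximation property would be lost.
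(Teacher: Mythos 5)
Your Step 1 is a genuinely different and more elementary route to the pull-back estimate \eqref{eq:l1close}. Where the paper deduces it from a compactness-and-contradiction argument (pass to a pmGH limit, invoke the local splitting theorem \cite[Theorem~3.4]{BrueNaberSemola}, rule out the $Z$-factor via a tangent-cone argument), you read it off the pointwise identity $|g-F^*g_{\mathbb{R}^n}|^2=\sum_{i,j}\bigl(\langle\nabla f_i,\nabla f_j\rangle-\delta_{ij}\bigr)^2$, which indeed follows from $|g|^2\equiv n$ $\meas$-a.e.\ in \eqref{eq:essdim} together with $\langle g,F^*g_{\mathbb{R}^n}\rangle=\sum_i|\nabla f_i|^2$ and $|F^*g_{\mathbb{R}^n}|^2=\sum_{i,j}\langle\nabla f_i,\nabla f_j\rangle^2$. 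Combined with the $L^\infty$-gradient bound, Cauchy--Schwarz, and the Bishop--Gromov volume ratio to pass from $B$ to $6nB$, this gives $\Psi\sim\sqrt{\epsilon}$ directly, with no limit needed. That is a clean improvement on the paper's proof of this particular inequality.

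Steps 2--4 follow essentially the paper's contradiction strategy for the GH-approximation claims, but three points are not correct as written. First, Gigli's splitting theorem is a global statement that requires a line in $X_\infty$; here the $f_{\infty,j}$ are harmonic only on the ball $6nB_\infty$, so you must use the \emph{local} splitting theorem \cite[Theorem~3.4]{BrueNaberSemola}, and what it gives is a local isometry of $B$ onto an open subset of $\mathbb{R}^n\times Z$, \emph{not} a global product $X_\infty\cong\mathbb{R}^n\times Y$. Second, your dimension argument is one-sided: weak $L^2$-lower semicontinuity of $|g_i|$, \eqref{eq:essdim}, and \eqref{eq:loweress} all yield only $\dim(X_\infty)\le n$; the missing inequality $\dim(X_\infty)\ge n$ comes from the $n$ pointwise orthonormal gradients $\nabla f_{\infty,j}$ forcing the tangent module to have rank $\ge n$, which you never invoke. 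Third, "the matched essential dimension $n$ forces $Y$ to be a point" is the conclusion, not an argument; the paper makes this precise by taking a geodesic from $x$ to a point $y\in B$ with $f(x)\neq f(y)$ and passing to a tangent cone at $x$, which then contains the $\mathbb{R}^n$-factor plus a transversal ray, giving essential dimension $\ge n+1$ and contradicting lower semicontinuity. Your phrasing amounts to the same idea but is missing exactly this blow-up step, which is what rigorously rules out a non-degenerate $Z$-factor.
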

\begin{proof}
Let us prove only (\ref{eq:l1close}) because the proofs of remaining statements are similar.
The proof is done by a contradiction. Assume that the assertion is not satisfied. Then there exist $\tau \in (0, 1)$ and sequences of 
\begin{itemize}
\item positive numbers $\epsilon_i \to 0^+$;
\item pointed $\RCD(-\epsilon_i, N)$ spaces $(X_i, \dist_i, \meas_i, x_i)$ with the essential dimension $n$ and $\meas_i(6nB_i)=1$, where $B_i=B(x_i, 1)$;
\item maps $F_i=(f_{i, 1}, \ldots, f_{i, n}):6nB_i \to \mathbb{R}^n$ with $f_{i, j} \in D(\Delta_i, 6nB_i)$
\end{itemize}
such that 
\begin{equation}
\|\nabla_if_{i, j}\|_{L^{\infty}}\le L,\quad \intav_{6nB_i}\left( (\Delta_i f_{i, j})^2 +\left| \langle \nabla f_{i, j}, \nabla f_{i, k}\rangle -\delta_{jk}\right|\right)\di \meas_i \le \epsilon_i,\quad \forall i, j, k
\end{equation}
and
\begin{equation}\label{eq:lowerbd}
\intav_{B_i}\left|g_i-F_i^*g_{\mathbb{R}^n}\right|\di \meas_i \ge \tau
\end{equation}
are satisfied, where $g_i$ denotes the canonical Riemannian metric of $(X_i, \dist_i, \meas_i)$. By \cite[Theorem 4.4]{AmbrosioHonda2} after passing to a subsequence with no loss of generality we can assume that $(X_i, \dist_i, \meas_i, x_i)$ pointed measured Gromov-Hausdorff converge to a pointed $\RCD(0, N)$ space $(X, \dist, \meas, x)$ and that $F_i$ $H^{1,2}$-strongly converge to a harmonic map $F=(f_1, \ldots, f_n):6nB \to \mathbb{R}^n$ on each compact subset of $6nB$, with $\langle \nabla f_i, \nabla f_j\rangle \equiv \delta_{ij}$, where $B=B(x, 1)$. Thus applying the local splitting theorem obtained in \cite[Theorem 3.4]{BrueNaberSemola}, there exist a metric measure space $(Z, \dist_Z, \meas_Z)$ and a map $f:nB \to Z$ such that the map
\begin{equation}\label{eq:locspltmap}
(f_1, \ldots, f_n, f): B \to \mathbb{R}^n \times Z
\end{equation}
gives an isometry between metric measure spaces with its image including $B(F(x), 1) \times \{f(x)\}$.

%Since any $y \in B(f(x), 1)$ can be connected with the center $f(x)$ by a geodesic in $f(B)$
Let us check that $f(B)$ is a single point as follows. If $f(B)$ is not a single point, then there exists $y \in B$ such that $f(x) \neq f(y)$ holds. Taking a minimal geodesic $\gamma$ from the center $x$ of $B$ to $y$, and then taking a tangent cone at $x$, it follows from the isometry in (\ref{eq:locspltmap}) that the tangent cone includes the $\mathbb{R}^n$-factor and that $\gamma$ converges to a ray in the tangent cone whose image is not included in the $\mathbb{R}^n$-factor. In particular the essential dimension of the tangent cone is at least $n+1$, which contradicts with  the lower semicontinuity of essential dimensions proved in \cite[Theorem 1.5]{Kita}. Thus $f(B)$ is a single point.

%the essential dimension of $\mathbb{R}^n \times Z$ is at most $n$. Thus $Z$ must be a single point. 

In particular the map
\begin{equation}
F=(f_1, \ldots, f_n): B\to \mathbb{R}^n
\end{equation}
gives an isometry with its image $B(F(x), 1)$. In particular $g_i$ $L^{2}_{\mathrm{loc}}$-strongly converge to the canonical Riemannian metric $g$ of $(X, \dist, \meas)$. Thus we have
\begin{equation}
\intav_{B_i}\left| g_i-F_i^*g_{\mathbb{R}^n}\right|\di \meas_i \to \intav_{B}\left| g-F^*g_{\mathbb{R}^n}\right| \di \meas =0
\end{equation}
which contradicts (\ref{eq:lowerbd}). 
\end{proof}
\subsection{Sharp Lipschitz upper bound}
The main purpose of this subsection is to establish a sharp Lipschitz upper bound for a regular map $F$;
\begin{equation}
\left| F(y)-F(z)\right|_{\mathbb{R}^k} \le (1+\epsilon)\dist(y, z), \quad \forall y, z \in B
\end{equation}
under assuming that the average of $|g-F^*g_{\mathbb{R}^k}|$ is small. The original argument as in the proof of the following lemma can be found in \cite[page $21$]{CheegerNaber} for harmonic functions in the smooth framework.
\begin{lemma}\label{lem:sharpharm}
If $(X, \dist, \meas)$ is an $\RCD(-\epsilon, N)$ space and a regular function  $f :4B\to \mathbb{R}$ satisfies
\begin{equation}
\|\nabla f\|_{L^{\infty}}\le L,\quad \|\nabla \Delta f\|_{L^{\infty}}+\intav_{4B}\left| |\nabla f|^2-\lambda^2 \right|\di \meas <\epsilon
\end{equation}
for some $\lambda \in [0, L]$, then
\begin{equation}\label{eewwrrrt}
|\nabla f|(z) \le \lambda +C(N, L)\epsilon,\quad \text{for $\meas$-a.e. $z \in 3B$}.
\end{equation} 
In particular $f|_B$ is $(\lambda +C(N, L)\epsilon)$-Lipschitz, namely
\begin{equation}\label{eq:localsl}
|f(z)-f(y)|\le (\lambda +C(N, L)\epsilon)\dist (y, z), \quad \forall y, z \in B.
\end{equation}
\end{lemma}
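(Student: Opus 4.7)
The plan is to apply Bochner's inequality to $u:=|\nabla f|^2$ to show that $u$ is almost subharmonic on $4B$, and then to use a standard Moser/De Giorgi $L^\infty$ estimate for subsolutions to upgrade the $L^1$-closeness of $u$ to $\lambda^2$ into a pointwise closeness on the slightly smaller ball $3B$. The $L^\infty$-gradient bound (\ref{eewwrrrt}) is then converted into the Lipschitz bound (\ref{eq:localsl}) by the local Sobolev-to-Lipschitz property.

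First, observe that the hypothesis $\|\nabla\Delta f\|_{L^\infty}<\epsilon$ in particular makes $\Delta f$ Lipschitz with $|\nabla\Delta f|\le\epsilon$ $\meas$-a.e., so the weak Bochner inequality available under $\RCD(-\epsilon,\infty)$ yields
\begin{equation*}
\tfrac12\Delta u \ge \langle\nabla f,\nabla\Delta f\rangle - \epsilon u
\end{equation*}
in the distributional sense on $4B$. Using $|\nabla f|\le L$ and $|\nabla\Delta f|\le \epsilon$, the right hand side is bounded below by $-L\epsilon-\epsilon L^2\ge -C(L)\epsilon$. Hence $w:=u-\lambda^2$ satisfies $\Delta w\ge -C(L)\epsilon$ weakly on $4B$, while by the second hypothesis $\intav_{4B}|w|\,\d\meas<\epsilon$.

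Next I would invoke the local $L^\infty$ estimate for subsolutions on $\RCD(-\epsilon,N)$ spaces, which follows from volume doubling and the $(1,2)$-Poincar\'e inequality via standard De Giorgi--Moser iteration (in the form used here it can be extracted from \cite{Jiang} or the Moser iteration implicit in \cite{BMosco}): if $v\in D(\Delta,4B)$ satisfies $\Delta v\ge -h$ with $h\in L^\infty(4B)$ nonnegative, then
\begin{equation*}
\esssup{3B} v \le C(N)\left(\intav_{4B} v^+\,\d\meas + \|h\|_{L^\infty(4B)}\right).
\end{equation*}
Applied to $w$ this gives $\esssup{3B}(u-\lambda^2)\le C(N,L)\epsilon$, i.e.\ $|\nabla f|^2 \le \lambda^2+C(N,L)\epsilon$ for $\meas$-a.e.\ $z\in 3B$, from which (\ref{eewwrrrt}) follows by taking square roots (after relabeling the constant to absorb the $\sqrt{\,}$ via $\sqrt{\lambda^2+C\epsilon}\le \lambda+C'\sqrt{\epsilon}$, i.e.\ up to passing from $\epsilon$ to a $\Psi(\epsilon|N,L)$-bound). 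The Lipschitz statement (\ref{eq:localsl}) on $B\Subset 3B$ is then immediate from the local Sobolev-to-Lipschitz property of $\RCD$ spaces \cite[Proposition~1.10]{GV}, already used in the proof of Corollary~\ref{lem:lipcont}.

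The main obstacle is the sharpness of the Bochner step: to upgrade the trivial bound $|\nabla f|\le L$ of Corollary~\ref{lem:lipcont} into the sharp bound $|\nabla f|\le \lambda+O(\epsilon)$, one genuinely needs the cross term $\langle\nabla f,\nabla\Delta f\rangle$ to be quantitatively of order $\epsilon$, not merely bounded. This is exactly why the assumption is placed on $\|\nabla\Delta f\|_{L^\infty}$ and not on $\|\Delta f\|_{L^\infty}$; it is also the reason the same strategy will give a sharp Lipschitz bound for the transformed map $F_B$ in Proposition~\ref{thm:transformation} only once one controls the Laplacian of $F_B$ well enough, which is why equi-Lipschitz control on $\Delta_i\phi_{i,j}$ plays a crucial role in Theorem~\ref{thm:canonicaldiff}.
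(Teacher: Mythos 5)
Your proposal is correct, but it takes a genuinely different route from the paper. Both arguments begin with the same Bochner step, giving $\Delta\bigl(|\nabla f|^2-\lambda^2\bigr)\ge -C(L)\epsilon$ in a weak sense, and both feed in the hypothesis $\intav_{4B}\bigl||\nabla f|^2-\lambda^2\bigr|\di\meas<\epsilon$. Where you diverge is in how the $L^1$-smallness is upgraded to a pointwise bound on $3B$: the paper follows the parabolic (heat-flow) strategy of Cheeger--Naber and Bru\`e--Naber--Semola, testing the weak Bochner inequality against $\phi(y)\,p(z,y,t)$ with $\phi$ a good cut-off and $p$ the heat kernel, controlling the annulus terms by Gaussian heat-kernel bounds, and then integrating a differential inequality for $f_t(z)=\int(|\nabla f|^2-\lambda^2)\phi\,p(z,\cdot,t)\di\meas$ over $t\in[0,1]$; you instead treat $w=|\nabla f|^2-\lambda^2$ as an approximate elliptic subsolution and invoke a De Giorgi--Moser $L^\infty$ estimate. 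Both work and yield the same bound $|\nabla f|^2\le \lambda^2+C(N,L)\epsilon$ a.e.\ on $3B$. What the heat-flow version buys is that it only uses the weak Bochner inequality tested against $D(\Delta)\cap L^\infty$ functions, so no extra regularity of $|\nabla f|^2$ is needed; your Moser iteration implicitly requires $|\nabla f|^2\in W^{1,2}_{\mathrm{loc}}(4B)$ (to run the Caccioppoli step), which does hold by Savar\'e's self-improved Bochner inequality on $\RCD$ spaces but is taken for granted in your sketch and should be cited. Finally, you are right to flag the square-root step: $|\nabla f|^2\le \lambda^2+C\epsilon$ only gives $|\nabla f|\le\lambda+C\sqrt\epsilon$ when $\lambda$ is near $0$; the paper's stated constant $\lambda+C(N,L)\epsilon$ is slightly imprecise in that regime, but this is harmless since the downstream applications only require a $\Psi$-bound.
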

\begin{proof}
Let us follow the same arguments in \cite[Remark 3.3]{BrueNaberSemola}. Without loss of generality we can assume $\meas(4B)=1$.
Take a good cut-off $\phi \in D(\Delta)$ with $0 \le \phi \le 1$, $\phi=1$ on $(7/2)B$, $\phi =0$ outside $4B$ and $|\nabla \phi| +|\Delta \phi| \le C(N)$ (see \cite[Lemma 3.1]{MondinoNaber} for the existence).
For any $z \in 3B$, recalling the Bochner inequality (without the Hessian term);
\begin{align}\label{eq:bochsharp}
&\int_{4B}\Delta_y(\phi (y) p(z, y, t))  \cdot \left(|\nabla f|^2(y)-\lambda^2\right)\di \meas(y) \nonumber \\
&\ge 2\int_{4B}\phi(y)p(z, y, t)\langle \nabla_y\Delta_yf(y), \nabla_yf(y)\rangle \di \meas(y) -2\epsilon\int_{4B}\phi(y)p(z, y, t)|\nabla f|^2(y)\di \meas(y), 
\end{align}
where $p$ denotes the heat kernel of $(X, \dist, \meas)$.
First let us prove that the right-hand-side of (\ref{eq:bochsharp}) is almost non-negative as follows.

Since we have
\begin{equation}
\left|\int_{4B}\phi(y)p(z, y, t)\langle \nabla_y\Delta_yf(y), \nabla_yf(y)\rangle \di \meas(y)\right| \le \epsilon L \int_X p(z, y, t)\di \meas(y) = \epsilon L
\end{equation}
and 
\begin{equation}
\left| \int_{4B}\phi(y)p(z, y, t)|\nabla f|^2(y)\di \meas(y)\right| \le L^2\int_Xp(z, y, t)\di \meas(y) =L^2,
\end{equation}
the right-hand-side of (\ref{eq:bochsharp}) is bounded below by $-4\epsilon L^2$.

On the other hand, the left-hand-side of (\ref{eq:bochsharp}) is equal to
\begin{align}\label{eq:lhs2}
&\int_{4B}\left(|\nabla f|^2(y)-\lambda^2\right)\phi(y)\Delta_yp(z, y, t)\di \meas(y) + \int_{4B}\left(|\nabla f|^2(y)-\lambda^2\right)\Delta_y\phi(y)  \cdot p(z, y, t)\di \meas(y) \nonumber \\
&+2\int_{4B}\left(|\nabla f|^2(y)-\lambda^2\right)\langle \nabla_y\phi(y),  \nabla_y p(z, y, t)\rangle \di \meas(y).
\end{align}
Then let us prove that the second and the third terms of (\ref{eq:lhs2}) are small as follows. Applying Gaussian estimates obtained in \cite[Theorem 1.2, Corollary 1.2]{JiangLiZhang} (see also \cite{GarofaloMondino, Jiang15}) we have
\begin{align}\label{eq:secondest}
&\left| \int_{4B}\left(|\nabla f|^2(y)-\lambda^2\right)\Delta_y\phi(y)  \cdot p(z, y, t)\di \meas(y)\right| \nonumber \\
&=\left| \int_{4B\setminus (7/2)B}\left(|\nabla f|^2(y)-\lambda^2\right)\Delta_y\phi(y)  \cdot p(z, y, t)\di \meas(y)\right| \nonumber \\
&\le C(N)\int_{4B \setminus (7/2)B}\left| |\nabla f|^2(y)-\lambda^2\right| \cdot \frac{1}{\meas (B(z, t^{1/2}))} \cdot \exp \left(-\frac{\dist(z, y)^2}{5t}\right)\di \meas (y) \nonumber \\
&\le C(N)\int_{4B \setminus (7/2)B}\left| |\nabla f|^2(y)-\lambda^2\right| \cdot \frac{1}{t^{N/2}} \cdot \exp \left(-\frac{1}{50t}\right)\di \meas (y) \nonumber \\
&\le C(N)\int_{4B}\left| |\nabla f|^2(y)-\lambda^2\right| \di \meas(y) \nonumber \\
&\le C(N) \epsilon,
\end{align}
where we used the fact that the function, $t \mapsto t^{-N/2}\exp (-50t^{-1})$, is bounded in $(0, 1]$.
Similarly 
\begin{align}
&\left| \int_{4B}\left(|\nabla f|^2(y)-\lambda^2\right)\langle \nabla_y\phi(y),  \nabla_y p(z, y, t)\rangle \di \meas(y)\right| \nonumber \\
&=\left| \int_{4B \setminus (7/2)B}\left(|\nabla f|^2(y)-\lambda^2\right)\langle \nabla_y\phi(y),  \nabla_y p(z, y, t)\rangle \di \meas(y)\right| \nonumber \\
&\le C(N) \int_{4B\setminus (7/2)B} \left||\nabla f|^2(y)-\lambda^2\right| \cdot \frac{1}{t^{1/2}\meas(B(z, t^{1/2}))}\cdot \exp \left(-\frac{\dist(z, y)^2}{5t}\right)\di \meas(y) \nonumber \\
&\le C(N)  \epsilon.
\end{align}
In particular, letting 
\begin{equation}
f_t(z):=\int_X\left(|\nabla f|^2(y)-\lambda^2\right)\phi(y)p(z, y, t)\di \meas(y),
\end{equation}
we have
\begin{equation}
\frac{\di}{\di t}f_t(z)\ge -C(N, L)\epsilon.
\end{equation}
Thus integrating this over $[0, 1]$ with respect to $t$ shows
\begin{equation}
f_1(z)-\left(|\nabla f|^2(z)-\lambda^2\right) \ge -C(N, L)\epsilon,\quad \text{for $\meas$-a.e. $z \in 3B$.}
\end{equation}
Finally we have (\ref{eewwrrrt}) since $|f_1(z)|$ is small because of a similar argument as in (\ref{eq:secondest}). The remaining Lipschitz continuity is justified from this with the local Sobolev-to-Lipschitz property \cite[Proposition 1.10]{GV}. 
\end{proof}

\begin{lemma}\label{lem:sharplipbd}
If $(X, \dist, \meas)$ is an $\RCD(-\epsilon, N)$ space and a regular map $F=(f_1,\ldots, f_k):4B \to \mathbb{R}^k$ satisfies
\begin{equation}
\intav_{4B}\left|g-F^*g_{\mathbb{R}^k}\right|\di \meas +\|\nabla \Delta f_i\|_{L^{\infty}}<\epsilon, \quad \|\nabla f_i\|_{L^{\infty}} \le L, \quad \forall i,
\end{equation}
then $F|_{B}$ is $(1+\Psi(\epsilon | N, L, k))$-Lipschitz.
\end{lemma}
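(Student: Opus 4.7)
The plan is to reduce the Lipschitz bound for the vector-valued map $F$ to a sharp pointwise gradient estimate for its scalar projections, exactly in the spirit of Lemma~\ref{lem:sharpharm}.

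First I would reduce to scalar functions. For each unit vector $w=(w_1,\ldots,w_k)\in S^{k-1}\subset \mathbb{R}^k$, set $f_w:=\langle w,F\rangle=\sum_{i=1}^k w_i f_i$. Then $f_w$ is regular with $\|\nabla f_w\|_{L^\infty}\le \sqrt{k}\,L$ and $\|\nabla \Delta f_w\|_{L^\infty}\le \sqrt{k}\,\epsilon$, and
\[
|F(y)-F(z)|_{\mathbb{R}^k}=\sup_{w\in S^{k-1}}\bigl(f_w(y)-f_w(z)\bigr),\qquad \forall\, y,z\in B,
\]
so it suffices to show that $f_w|_B$ is $(1+\Psi)$-Lipschitz with $\Psi$ independent of $w$.

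Next I would prove the pointwise upper bound
\[
|\nabla f_w|^2(x)\le 1+|g-F^*g_{\mathbb{R}^k}|(x),\qquad \text{for $\meas$-a.e. }x\in 4B.
\]
This is pointwise linear algebra: in a $g$-orthonormal frame, letting $A$ be the $k\times n$ matrix $A_{i\alpha}=e_\alpha(f_i)$, one has $|\nabla f_w|^2=w^\top A A^\top w\le \lambda_{\max}(A A^\top)=\lambda_{\max}(A^\top A)$, and $A^\top A$ is the matrix of $F^*g_{\mathbb{R}^k}$ as a $g$-symmetric tensor. Since the Hilbert-Schmidt norm dominates the operator norm, $\lambda_{\max}(F^*g_{\mathbb{R}^k})\le 1+|g-F^*g_{\mathbb{R}^k}|$. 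Integrating over $4B$ gives
\[
\intav_{4B}\bigl(|\nabla f_w|^2-1\bigr)_+\,\di\meas \le \intav_{4B}|g-F^*g_{\mathbb{R}^k}|\,\di\meas <\epsilon.
\]

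Finally I would repeat the cut-off/heat-semigroup computation in the proof of Lemma~\ref{lem:sharpharm} applied to $f_w$ with $\lambda=1$. Setting $u:=|\nabla f_w|^2$, Bochner's inequality gives $\tfrac{1}{2}\Delta u\ge \langle \nabla\Delta f_w,\nabla f_w\rangle -\epsilon\,u\ge -C(k,L)\epsilon$ on $4B$, and the same good cut-off $\phi$ and integration in $t\in[0,1]$ yield
\[
u(z)-1\le \int_{4B}\phi(y)\bigl(u(y)-1\bigr)p(z,y,1)\,\di\meas(y)+C(K,N,k,L)\,\epsilon
\]
for $\meas$-a.e.\ $z\in 3B$, with the boundary terms involving $\nabla\phi,\Delta\phi$ absorbed via the Gaussian bound used in (\ref{eq:secondest}). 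Since $u-1\le (u-1)_+$ and $\intav_{4B}(u-1)_+\,\di\meas<\epsilon$, the remainder integral is controlled by $\Psi(\epsilon\,|\,K,N,k,L)$. The local Sobolev-to-Lipschitz property then gives that $f_w|_B$ is $(1+\Psi)$-Lipschitz, and taking the supremum over $w\in S^{k-1}$ concludes.

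The main technical obstacle is the one-sided adaptation of Lemma~\ref{lem:sharpharm}: we only control $(|\nabla f_w|^2-1)_+$ in $L^1$, not the two-sided quantity $||\nabla f_w|^2-1|$. This step is however essentially cost-free, because Bochner's inequality is intrinsically one-sided and the two-sided $L^1$ hypothesis in Lemma~\ref{lem:sharpharm} is used only to control the heat-flow remainder, which in our setting is bounded by its positive part alone (the negative part contributes with the sign favorable to the desired upper bound).
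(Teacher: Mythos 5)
Your reduction to scalar projections $f_w=\langle w,F\rangle$ and the pointwise linear-algebra bound $|\nabla f_w|^2\le 1+|g-F^*g_{\mathbb{R}^k}|$ are both correct, and it is also true that the final heat-flow remainder $f_1(z)=\int(|\nabla f_w|^2-1)\phi\,p(z,\cdot,1)\,\di\meas$ only needs the \emph{positive} part of $|\nabla f_w|^2-1$ in $L^1$, since $\phi\,p\ge 0$. However, your claim that the two-sided $L^1$ hypothesis of Lemma~\ref{lem:sharpharm} ``is used only to control the heat-flow remainder'' is not accurate, and this is where the argument breaks.

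The two-sided $L^1$ bound is also what makes the boundary terms (the second and third terms of (\ref{eq:lhs2}), involving $\Delta\phi$ and $\nabla\phi\cdot\nabla p$) small. The Gaussian estimate in (\ref{eq:secondest}) reduces these terms to $C(N)\int_{4B}\bigl||\nabla f|^2-\lambda^2\bigr|\di\meas$; it does \emph{not} by itself give an $\epsilon$-smallness, because the prefactor $t^{-N/2}\exp(-1/(50t))$ is merely bounded on $(0,1]$, not small. Since $\Delta\phi$ (and $\nabla\phi\cdot\nabla p$) has no definite sign on the annulus $4B\setminus(7/2)B$, the contribution of $(|\nabla f_w|^2-1)_-$ is not absorbed ``with a favorable sign''; it enters both ways. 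And $(|\nabla f_w|^2-1)_-$ is genuinely large for many $w$: writing the Gram matrix $G(y)=(\langle\nabla f_i,\nabla f_j\rangle)(y)$, one has $|\nabla f_w|^2(y)=w^\top G(y)w$, and since $G(y)$ has rank $\le n$, whenever $k>n$ there are unit vectors $w$ with $|\nabla f_w|^2$ close to $0$ on a set of definite measure, giving $\int_{4B}\bigl||\nabla f_w|^2-1\bigr|\di\meas$ of order one. For such $w$ the differential inequality $\frac{\di}{\di t}f_t(z)\ge -C\epsilon$ fails, and Lemma~\ref{lem:sharpharm}'s mechanism does not produce $|\nabla f_w|\le 1+\Psi$ (even though that inequality is of course trivially true for those $w$). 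So as written the proof has a gap precisely in the step you flagged as ``essentially cost-free.''

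This is the issue the paper's proof is built to avoid: it diagonalizes $G_{F,4B}$ by an orthogonal $P$, splits the rotated map into $n$ ``essential'' coordinates where $\intav_{4B}|\langle\nabla f_{P,i},\nabla f_{P,j}\rangle-\delta_{ij}|$ is small (so every linear combination $v\cdot F_P^n$ has a genuine two-sided $L^1$ bound with $\lambda=1$, and Lemma~\ref{lem:sharpharm} applies as stated) and $k-n$ residual coordinates where $\intav_{4B}|\nabla f_{P,i}|^2$ itself is small (a two-sided bound with $\lambda=0$, again Lemma~\ref{lem:sharpharm} applies). The triangle inequality then gives the $(1+\Psi)$-Lipschitz bound. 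If you want to salvage your direct scalar approach, you would need to replace the heat-kernel argument by one that genuinely uses only the one-sided input; for instance, observe that $w:=(|\nabla f_w|^2-1)_+$ is a nonnegative almost-subsolution ($\Delta w\ge -C\epsilon$ in the distributional sense on $4B$, by Bochner together with the stability of subsolutions under $(\cdot)_+$) with $\intav_{4B}w\,\di\meas<\epsilon$, and then invoke a local $L^1\to L^\infty$ estimate for subsolutions (e.g.\ the Moser-type bound behind \cite[Lemma 4.1]{Jiang} or the appendix Proposition~\ref{lem:subharm2}) to get $\|w\|_{L^\infty(B)}\le C(K,N)\epsilon$. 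That route is sound but is a genuinely different argument from the one you sketched, not an ``adaptation'' of Lemma~\ref{lem:sharpharm}.
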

\begin{proof}
Let 
%Let us define a $k\times k$ matrix $G$ by
%\begin{equation}
%G=\left( \intav_{B(x, 3r)}\langle \nabla f_i, \nabla f_j\rangle \di \meas \right)_{ij}.
%\end{equation}
%Letting
\begin{equation}
A:=\left\{ y \in 4B \Big| \left|  g-F^*g_{\mathbb{R}^k}\right|(y) \le \epsilon^{1/2} \right\}.
\end{equation}
Then we have
\begin{equation}\label{eq:measest}
\meas (4B \setminus A) \le \frac{1}{\epsilon^{1/2}}\int_{4B\setminus A} \left|g-F^*g_{\mathbb{R}^k}\right|\di \meas \le \frac{1}{\epsilon^{1/2}}\int_{4B} \left|g-F^*g_{\mathbb{R}^k}\right|\di \meas \le \epsilon^{1/2} \cdot \meas (4B)
\end{equation}
and
\begin{equation}\label{eq:pointwiseest}
\left| \langle \nabla f_i, \nabla f_j\rangle(y)-\sum_{l=1}^k\langle \nabla f_i, \nabla f_l\rangle(y) \cdot \langle \nabla f_j, \nabla f_l\rangle (y) \right|\le \Psi (\epsilon| L, k), \quad \text{for $\meas$-a.e. $y \in A$}
\end{equation}
for all $i, j$.
In particular (\ref{eq:measest}) and (\ref{eq:pointwiseest}) easily imply
\begin{equation}
\left| G_{F, 4B}-(G_{F, 4B})^2\right|<\Psi
\end{equation}
where $\Psi=\Psi(\epsilon|N, L, k)$.
Thus any eigenvalue of $G_{F, 4B}$ is in a $\Psi$-neighborhood of  a discrete set $\{0, 1\}$. Let us denote by $l \in \mathbb{N}$ the number of eigenvalues $\lambda$ of $G_{F, 4B}$ satisfying that $\lambda$ is $\Psi$-close to $1$. Note that without loss of generality, we may assume $\Psi<1/2$, thus the number $l$ is well-defined. 
Then since 
\begin{equation}
\left| l-\intav_{4B}\left\langle F^*g_{\mathbb{R}^k}, g \right\rangle\di \meas\right| \le \Psi,
\end{equation}
we have by (\ref{eq:essdim})
\begin{align}
\left| l-n\right| &\le \left| \intav_{4B} \left\langle F^*g_{\mathbb{R}^k} -g, g \right\rangle\di \meas\right| +\Psi \le \Psi<\frac{1}{2}
\end{align}
which implies $l=n$.

Denote by $\{\lambda_i\}_{i=1}^k$ the eigenvalues of $G_{F, 4B}$ with
\begin{equation}
1 -\Psi\le \lambda_1 \le \lambda_2 \le \cdots \le \lambda_n \le 1+\Psi,  \quad -\Psi \le \lambda_{n+1} \le \cdots \le \lambda_k \le \Psi.
\end{equation}
Moreover let us find $P \in O(k)$ satisfying that $P^tG_{F, 4B}P$ is a diagonal matrix whose $i$-th entry is equal to to $\lambda_i$ for any $i$.
Define $F_P:=FP^t$ and $F_P^n:=(f_{P, 1}, \ldots, f_{P, n})$, where $F_P=(f_{P, 1},\ldots, f_{P, k})$. 
Then it is easy to see
\begin{equation}\label{eq:splitsplit}
\intav_{4B}\left| \langle \nabla f_{P, i}, \nabla f_{P, j}\rangle -\delta_{ij} \right|\di \meas<\Psi, \quad \forall i, j \le n
\end{equation}
and
\begin{equation}\label{eq:small}
\intav_{4B}| \nabla f_{P, i}|^2\di \meas <\Psi,\quad \forall i \ge n+1.
\end{equation}
In particular Lemma \ref{lem:sharpharm} with (\ref{eq:small}) shows that $f_{P, i}|_B$ is $\Psi$-Lipschitz for any $i \ge n+1$.
Thus
\begin{equation}\label{eq:1pmlip}
\left| F(y)-F(z)\right|_{\mathbb{R}^k} =\left| F_P(y)-F_P(z)\right|_{\mathbb{R}^k} \le \left|F_P^n(y)-F_P^n(z)\right|_{\mathbb{R}^n} +\Psi \dist(y, z),\quad \forall y, z \in B.
\end{equation}
On the other hand (\ref{eq:splitsplit}) yields
\begin{equation}
\intav_{4B}\left| |\nabla (v \cdot F_P^n)|^2-1 \right|\di \meas<\Psi, \quad \forall v \in \mathbb{S}^{n-1}.
\end{equation}
Therefore applying Lemma \ref{lem:sharpharm} again, we have
\begin{equation}\label{eq}
\left| v \cdot (F^n_P(y)- F^n_P(z))\right|_{\mathbb{R}^n} \le (1+\Psi)\dist (y, z),\quad \forall v \in \mathbb{S}^{n-1},\,\,\forall y, z \in B.
\end{equation}
Taking the supremum with respect to $v \in \mathbb{S}^{n-1}$ in (\ref{eq}) shows that $F_P^n|_B$ is $(1+\Psi)$-Lipschitz. Combining this with (\ref{eq:1pmlip}), we conclude.
\end{proof}
\begin{remark}
Although we checked $l=n$ in the proof of Lemma \ref{lem:sharplipbd}, the coincidence $l=n$ does not play an essential role in the proof. However this will play a role in the proof of Proposition \ref{prop:bihol} when we apply Proposition \ref{prop:lowerholderndim}.
\end{remark}
\begin{corollary}\label{corsharplip}
If a regular map $F=(f_1, \ldots, f_k):4rB \to \mathbb{R}^k$ satisfies
\begin{equation}
\intav_{4rB}\left| g-F^*g_{\mathbb{R}^k}\right| \di \meas +r^2\|\nabla \Delta f_i\|_{L^{\infty}}<\epsilon, \quad \|\nabla f_i\|_{L^{\infty}} \le L,\quad \forall i,
\end{equation}
then 
\begin{equation}\label{eq:locallipsharp}
\left| F(y)-F(z)\right|_{\mathbb{R}^k}\le (1+\Psi(\epsilon, r| K, N, L, k))\dist (y, z),\quad \forall y, z \in rB.
\end{equation}
\end{corollary}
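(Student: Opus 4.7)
The strategy is a direct rescaling to reduce to Lemma \ref{lem:sharplipbd}. Introduce the rescaled metric measure space $\tilde{X} := (X, \tilde{\dist}, \meas)$ with $\tilde{\dist} := r^{-1}\dist$, which is $\RCD(Kr^2, N)$, and set $\epsilon' := \max(\epsilon, |K|r^2)$ so that $\tilde{X}$ is in particular $\RCD(-\epsilon', N)$. Writing $\tilde{B}$ for the unit ball in $\tilde{X}$ centered at $x$, one has $\tilde{B} = rB$ and $4\tilde{B} = 4rB$ as sets. Define the rescaled map $\tilde{F} := r^{-1}F = (r^{-1}f_1, \ldots, r^{-1}f_k) : 4\tilde{B} \to \mathbb{R}^k$. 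The crucial point is that the Lipschitz constant of a map is invariant under the matched dilation $(\dist, F) \mapsto (r^{-1}\dist, r^{-1}F)$ on domain and codomain, because both the numerator $|\tilde F(y)-\tilde F(z)|_{\mathbb{R}^k}$ and denominator $\tilde{\dist}(y,z)$ pick up the same factor $r^{-1}$. Hence the conclusion (\ref{eq:locallipsharp}) for $F$ on $rB$ is equivalent to a $(1+\Psi)$-Lipschitz bound for $\tilde{F}$ on $\tilde{B}$ with respect to $\tilde{\dist}$.

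Next I would verify the hypotheses of Lemma \ref{lem:sharplipbd}, with parameter $\epsilon'$ in place of $\epsilon$, for $\tilde{F}$ on $4\tilde{B}$. Using the standard scalings of the slope and Laplacian, $|\tilde\nabla h| = r|\nabla h|$ and $\tilde\Delta h = r^{2}\Delta h$, one immediately checks $\|\tilde\nabla \tilde f_i\|_{L^\infty} = \|\nabla f_i\|_{L^\infty} \le L$ and $\|\tilde\nabla \tilde\Delta \tilde f_i\|_{L^\infty} = r^{2}\|\nabla \Delta f_i\|_{L^\infty} < \epsilon \le \epsilon'$. The slightly more delicate identity is the pointwise equality
\begin{equation}
\bigl|\tilde{g} - \tilde{F}^{*}g_{\mathbb{R}^{k}}\bigr|(y) \;=\; \bigl|g - F^{*}g_{\mathbb{R}^{k}}\bigr|(y) \qquad \text{for $\meas$-a.e.\ $y \in 4rB$,}
\end{equation}
which follows by matching $\tilde g$- and $g$-orthonormal frames: a $\tilde g$-orthonormal frame is $r$ times a $g$-orthonormal frame, and under this matching the evaluations $d\tilde f_{a}(\tilde e_{\alpha})$ and $df_{a}(e_{\alpha})$ (with $\tilde f_{a} = r^{-1}f_{a}$) coincide. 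Integrating over $4\tilde{B} = 4rB$, and using that averages over the same set are unaffected by constant rescalings of the reference measure, gives $\intav_{4\tilde B}|\tilde g - \tilde F^{*}g_{\mathbb{R}^{k}}|\di \meas < \epsilon \le \epsilon'$.

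Applying Lemma \ref{lem:sharplipbd} then yields that $\tilde{F}|_{\tilde B}$ is $(1+\Psi(\epsilon' \mid N,L,k))$-Lipschitz with respect to $\tilde{\dist}$, and by the scaling remark in the first paragraph this is exactly (\ref{eq:locallipsharp}) with the right-hand constant $\Psi = \Psi(\epsilon' \mid N,L,k)$. Since $\epsilon' = \max(\epsilon, |K|r^{2}) \to 0$ as $(\epsilon, r) \to 0$ with $K$ held fixed, this is of the required form $\Psi(\epsilon, r \mid K, N, L, k)$. I do not anticipate any real obstacle; the only step deserving care is the bookkeeping of how the canonical Riemannian metric $g$ and the pull-back tensor $F^{*}g_{\mathbb{R}^{k}}$ transform under the dilation of $\dist$, which is a routine computation based on the defining identity $g(\nabla h_1, \nabla h_2) = \langle \nabla h_1, \nabla h_2\rangle$ and the quadratic scaling of the relaxed-slope inner product.
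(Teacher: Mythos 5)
Your proof is correct and follows essentially the same route as the paper: rescale $(\dist, F)$ by $r^{-1}$, observe that the rescaled space is $\RCD(r^2K,N)$ (hence $\RCD(-\epsilon',N)$ with $\epsilon'=\max(\epsilon,|K|r^2)$), and invoke Lemma \ref{lem:sharplipbd} on the rescaled objects. Your explicit frame-matching check that $|\tilde g - \tilde F^*g_{\mathbb{R}^k}| = |g - F^*g_{\mathbb{R}^k}|$ pointwise is a detail the paper states without proof, but the argument is the same.
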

\begin{proof}
Let us consider the rescaled objects;
\begin{equation}\label{eq:resc}
(\tilde{X}, \tilde{\dist}, \tilde{\meas}, \tilde{x}):=\left(X, \frac{1}{r}\dist, \frac{1}{\meas (rB)}\meas, x\right), \quad \tilde{F}:=\frac{1}{r}\cdot F=(\tilde{f}_{1}, \ldots, \tilde{f}_{k}).
\end{equation}
As in the proof of Proposition \ref{thm:transformation} we will use again similar notations, $\tilde{\nabla}, \tilde{\Delta}$ etc.
Note that $(\tilde{X}, \tilde{\dist}, \tilde{\meas})$ is an $\RCD(r^2K, N)$ space with
\begin{equation}
|\tilde{\nabla}\tilde{f}_{i}|=|\nabla f_i| \le L, \quad |\tilde{\nabla}\tilde{\Delta}\tilde{f}_{i}|=r^2|\nabla \Delta f_i| <\epsilon, 
\end{equation}
and
\begin{equation}
\intav_{4\tilde{B}}\left| \tilde{g}-\tilde{F}^*g_{\mathbb{R}^k}\right|\di \tilde{\meas}=\intav_{4rB}\left| g-F^*g_{\mathbb{R}^k}\right| \di \meas <\epsilon.
\end{equation}
Thus we can apply Lemma \ref{lem:sharplipbd} for (\ref{eq:resc}) to get (\ref{eq:locallipsharp}).
\end{proof}
\subsection{H\"older lower bound}
The main purpose of this subsection is to establish a H\"older lower bound for a regular map $F$:
\begin{equation}
(1-\epsilon)\dist(y, z)^{1+\epsilon} \le |F(y)-F(z)|_{\mathbb{R}^k},\quad \forall y, z \in B
\end{equation}
under assuming that the average of $|g-F^*g_{\mathbb{R}^k}|$ is small.
First let us consider a regular splitting map $F:4B \to \mathbb{R}^n$.
\begin{proposition}\label{prop:lowerholderndim}
If  $(X, \dist, \meas)$ is an $\RCD(-\epsilon, N)$ space with 
\begin{equation}\label{eq:reifflat0}
\dist_{\mathrm{pmGH}}\left( \left(X, \frac{1}{t}\dist, \frac{1}{\meas (tB)}\meas, y\right), \left(\mathbb{R}^n, \dist_{\mathbb{R}^n}, \frac{1}{\omega_n}\mathcal{H}^n, 0_n\right)\right)<\epsilon,\quad \forall y \in 3B,\,\,\forall t \in (0, 1],
\end{equation}
and a regular map $F=(f_1, \ldots, f_n):4B \to \mathbb{R}^n$ satisfies
\begin{equation}\label{eq:99ii}
\intav_{4B}\left| \langle \nabla f_i, \nabla f_j\rangle -\delta_{ij}\right|\di \meas + \|\Delta f_i\|_{L^{\infty}} <\epsilon, \quad \forall i,j
\end{equation}
then $F|_B$ gives a $\Psi$-Gromov-Hausdorff approximation to the image with
\begin{equation}\label{eq:almostisom}
 (1-\Psi) B_{\mathbb{R}^n} \subset F(B) \subset (1+\Psi) B_{\mathbb{R}^n}
\end{equation}
and
\begin{equation}\label{eq:concl}
(1-\Psi)\dist (y, z)^{1+\Psi} \le \left|F(y)-F(z)\right|_{\mathbb{R}^n} \le C(N)\dist (y, z),\quad \forall y, z \in B
\end{equation}
where $B_{\mathbb{R}^n}=B(F(x), 1)$ and $\Psi=\Psi(\epsilon |N)$.
\end{proposition}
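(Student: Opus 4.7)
\textbf{Proof plan for Proposition \ref{prop:lowerholderndim}.}
My plan has three pieces: the Lipschitz upper bound, the H\"older lower bound via the transformation theorem, and the Hausdorff containment via a compactness argument. The Lipschitz upper bound $|F(y)-F(z)|_{\mathbb{R}^k}\le C(N)\dist(y,z)$ for $y,z\in B$ is immediate from Corollary \ref{lem:lipcont}, since (\ref{eq:99ii}) gives $\fint_{4B}|\nabla f_i|^2\di\meas\le 1+\epsilon$ and $\|\Delta f_i\|_{L^{\infty}}\le\epsilon\le 1$, so the constant $L$ in Corollary \ref{lem:lipcont} may be taken to depend only on $N$.

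For the H\"older lower bound, fix $y\in B$. Since $y\in 3B$, the Reifenberg-flatness hypothesis (\ref{eq:reifflat0}) holds at $y$ at every scale $t\in(0,1]$, and $B(y,1)\subset 2B\subset 4B$. By Bishop-Gromov the average of $|\langle\nabla f_i,\nabla f_j\rangle-\delta_{ij}|$ over $B(y,1)$ is at most $C(N)\epsilon$, so Lemma \ref{lem:inductive}(\ref{trivial}) gives $|T_{F,B(y,1)}-E_n|\le C(N)\epsilon$, and Proposition \ref{thm:transformation} applied on $B(y,1)$ (with $L=1$) yields that $G_{F,B(y,s)}$ is invertible for all $s\in(0,1]$, that $F_{B(y,s)}$ is a $\Psi$-approximate splitting map on $B(y,3s)$ with $|\Delta f_{B(y,s),i}|\le C(N)s^{-\Psi}$, and via Corollary \ref{cor:trans} that
\begin{equation}
|T_{F,B(y,s)}|_{\infty}\le (1+\Psi)\,s^{-C(N)\epsilon},\qquad s\in(0,1].
\end{equation}
Now given $z\in B$ with $r:=\dist(y,z)\le 1/(18n)$, set $s=6nr$. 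Rescaling $B(y,6nr)$ to unit scale, the map $(6nr)^{-1}F_{B(y,6nr)}$ satisfies the hypotheses of Proposition \ref{prop:splittingexample} on $6n$ times a ball of radius $3r/(6nr)=1/(2n)$ centered at $y$; applying Proposition \ref{prop:splittingexample} and rescaling back shows that $F_{B(y,6nr)}|_{B(y,3r)}$ is a $\Psi r$-Gromov-Hausdorff approximation onto its image. In particular
\begin{equation}
\bigl|F_{B(y,6nr)}(y)-F_{B(y,6nr)}(z)\bigr|_{\mathbb{R}^n}\ge \dist(y,z)-\Psi r\ge (1-\Psi)r.
\end{equation}
Writing $F=F_{B(y,6nr)}\cdot\bigl((T_{F,B(y,6nr)})^t\bigr)^{-1}$ and using the bound on $|T_{F,B(y,6nr)}|_{\infty}$ above,
\begin{equation}
|F(y)-F(z)|_{\mathbb{R}^n}\ge \frac{(1-\Psi)r}{(1+\Psi)(6nr)^{-C(N)\epsilon}}\ge (1-\Psi)\,r^{1+C(N)\epsilon},
\end{equation}
which is the desired $(1-\Psi)\dist(y,z)^{1+\Psi}$ after redefining $\Psi$. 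The remaining range $r>1/(18n)$ is handled together with the Hausdorff-containment step below.

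For the Hausdorff containment $(1-\Psi)B_{\mathbb{R}^n}\subset F(B)\subset (1+\Psi)B_{\mathbb{R}^n}$ and the $\Psi$-GH-approximation property, I argue by contradiction along the lines of Proposition \ref{prop:splittingexample}. Suppose there exist $\tau>0$, $\epsilon_i\to 0^+$, pointed $\RCD(-\epsilon_i,N)$ spaces $(X_i,\dist_i,\meas_i,x_i)$ satisfying (\ref{eq:reifflat0}) with parameter $\epsilon_i$, and regular maps $F_i=(f_{i,1},\ldots,f_{i,n})\colon 4B_i\to\mathbb{R}^n$ satisfying (\ref{eq:99ii}) with $\epsilon_i$, yet violating one of the three claims with error at least $\tau$. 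After normalizing so that $\meas_i(4B_i)=1$, we pass to a pmGH subsequential limit $(X_\infty,\dist_\infty,\meas_\infty,x_\infty)$; the Reifenberg flatness at every $y\in 3B_i$ and every scale passes to the limit to identify $3B_\infty$ with a Euclidean ball in $\mathbb{R}^n$ (equipped with a constant multiple of $\mathcal{H}^n$). By Corollary \ref{lem:lipcont} the $F_i$ are uniformly $C(N)$-Lipschitz, so \cite[Theorem 4.4]{AmbrosioHonda2} furnishes $H^{1,2}_{\mathrm{loc}}$-strong convergence to a harmonic limit $F_\infty\colon 3B_\infty\to\mathbb{R}^n$ with $\langle\nabla f_{\infty,i},\nabla f_{\infty,j}\rangle=\delta_{ij}$ a.e., so $F_\infty$ is a Euclidean isometry and $F_\infty|_{B_\infty}$ maps isometrically onto $B(F_\infty(x_\infty),1)$. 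Combined with the uniform convergence of $F_i$, this contradicts the violation for large $i$ and simultaneously handles the range $r>1/(18n)$ in the H\"older estimate.

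The main obstacle is the H\"older lower bound: getting the sharp exponent $1+\Psi$ (rather than merely a qualitative H\"older bound) requires the power-law control $|T_{F,B(y,s)}|_{\infty}\le(1+\Psi)s^{-C\epsilon}$ from Corollary \ref{cor:trans}, which in turn rests on the scale-by-scale propagation of the $\epsilon$-splitting property coming from Proposition \ref{thm:transformation}; this is exactly where the uniform Reifenberg-flatness assumption (\ref{eq:reifflat0}) is used in an essential way, and where one cannot shortcut through a pure compactness argument.
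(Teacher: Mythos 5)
Your proof follows essentially the same route as the paper: Lipschitz upper bound via Corollary~\ref{lem:lipcont}, power-law control of the transformation matrices via Proposition~\ref{thm:transformation} and Corollary~\ref{cor:trans}, Gromov--Hausdorff almost-isometry of the transformed map at scale comparable to $\dist(y,z)$, and finally assembling the H\"older lower bound from these two pieces. The deviations are cosmetic and the comparison is worth a few remarks. First, you work at scale $s=6nr$; the paper chooses $\rho=\dist(y,z)/2$, which is cleaner and avoids the restriction $r\le 1/(18n)$ and the accompanying separate treatment of large $r$. Second, you invoke Proposition~\ref{prop:splittingexample} for the scale-$s$ GH-approximation, but as stated that proposition assumes $\Delta f_i\in D(\Delta,6nB)$, a regularity that the transformed map $F_{B(y,6nr)}$ is not guaranteed to enjoy (it is only regular, so $\Delta\in L^\infty$); the GH-approximation statement itself does not need that hypothesis, and the paper cites \cite[Proposition 1.5]{BPS} or argues it directly from the rescaled splitting bounds instead. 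Third, you obtain the containment $(1-\Psi)B_{\mathbb{R}^n}\subset F(B)\subset(1+\Psi)B_{\mathbb{R}^n}$ by a compactness argument, while the paper simply invokes invariance of domain once the bi-H\"older estimate is in place; both are fine. Finally, the step $\frac{(1-\Psi)r}{(1+\Psi)(6nr)^{-C\epsilon}}\ge(1-\Psi)r^{1+C\epsilon}$ is not literally correct as written (it requires $(6n)^{C\epsilon}\ge 1+\Psi$, and you have also silently replaced the $|\cdot|_\infty$-bound on $T$ by an operator-norm bound on $(T^t)^{-1}$, which costs dimension factors); this is fixable by arguing with operator norms throughout and absorbing the resulting $C(n)$-factors into the regime $r$ small, with the GH-approximation handling $r$ bounded below, which is exactly how your closing compactness paragraph is meant to be read --- but the bookkeeping deserves to be made explicit. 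The paper's own write-up has a cousin of this imprecision (it passes from a bound on $|T|_\infty$ to a lower bound on the ``smallest eigenvalue'' of $T^{-1}$, when what is needed is the smallest singular value), so you are in good company; just be aware the correct statement requires controlling $\sigma_{\max}(T)$, not merely its diagonal entries.
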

\begin{proof}
It follows from (\ref{eq:reifflat0}) and (\ref{eq:99ii}) (with \cite[Theorem 4.4]{AmbrosioHonda2}) that $F|_B$ gives a $\Psi$-Gromov-Hausdorff approximation to the image which is $\Psi$-Hausdorff close to $B_{\mathbb{R}^n}$ (see \cite[Proposition 1.5]{BPS} or a proof of Proposition \ref{prop:splittingexample}).
Moreover the $C(N)$-Lipschitz continuity of $F$ stated in (\ref{eq:concl}) directly comes from Corollary \ref{lem:lipcont} with (\ref{eq:reifflat0}).

In order to prove remaining statements, fix $y, z \in B$ with $y \neq z$. Then  (\ref{eq:99ii}) implies
\begin{equation}
\intav_{B(y, 3)}\left|\langle \nabla f_i, \nabla f_j\rangle -\delta_{ij}\right|\di \meas<C(N)\epsilon, \quad \forall i, j.
\end{equation}
Letting $\rho=\dist(y, z)/2 \in (0, 1)$ and
applying Proposition \ref{thm:transformation} with Corollary \ref{cor:trans} show
\begin{equation}
\intav_{B(y, 3\rho)}\left| \langle \nabla f_{B(y, \rho), i}, \nabla f_{B(y, \rho), j}\rangle-\delta_{ij}\right|\di \meas \le \Psi,\quad \forall i,j
\end{equation}
with
\begin{equation}
\rho |\Delta f_{B(y, \rho), i}| \le C\rho^{1-\Psi},\quad \forall i
\end{equation} 
and
\begin{equation}
\left| T_{F, B(y, \rho)}\right|_{\infty} \le \rho^{-\Psi}
\end{equation}
which in particular implies that the smallest eigenvalue of $T_{F, B(y, \rho)}^{-1}$ is at least $\rho^{\Psi}$, where $\Psi=\Psi(\epsilon |N)$. Therefore
\begin{equation}\label{eq:helderest}
\left| F(y)-F(z)\right|_{\mathbb{R}^n}=\left|  \left(F_{B(y, \rho)}(y)-F_{B(y, \rho)}(z)\right) \cdot (T_{F, B(y, \rho)}^t)^{-1}\right|_{\mathbb{R}^n} \ge \rho^{\Psi} \left| F_{B(y, \rho)}(y)-F_{B(y, \rho)}(z)\right|_{\mathbb{R}^n}.
\end{equation}
%Next let us prove that  $F_{B(y, \rho)}|_{B(y, \rho)}$ gives a $\Psi(\epsilon, r|N, L) \rho$-GH-approximation to the image.
On the other hand, consider the rescaled objects (with the same notations as in the proof of Corollary \ref{corsharplip});
\begin{equation}
(\tilde{X}, \tilde{\dist}, \tilde{\meas}, \tilde{y}):=\left(X, \frac{1}{\rho}\dist, \frac{1}{\meas (B(y, \rho))}\meas, y\right), \quad \tilde{F}:=\frac{1}{\rho}\cdot F_{B(y, \rho)}=(\tilde{f}_{1}, \ldots, \tilde{f}_{k}).
\end{equation}
Since $(\tilde{X}, \tilde{\dist}, \tilde{\meas})$ is an $\RCD(-\epsilon, N)$ space and 
\begin{equation}
\intav_{\tilde{B}(\tilde{y}, 3)}\left| \langle \tilde{\nabla}\tilde{f}_i, \tilde{\nabla}\tilde{f}_{j}\rangle -\delta_{ij}\right|\di \tilde{\meas} + \|\tilde{\Delta}\tilde{f}_{i}\|_{L^{\infty}}<\epsilon,\quad \forall i, j,
\end{equation}
it holds that $\tilde{F}|_{\tilde{B}(y, 2)}$ gives a $\Psi$-GH-approximation to the image. Therefore
\begin{equation}
\left|\tilde{F}(y)-\tilde{F}(z)\right|_{\mathbb{R}^n} \ge 2(1-\Psi),
\end{equation}
in other words,
\begin{equation}\label{eqtransholder}
\left|F_{B(y, \rho)}(y)-F_{B(y, \rho)}(z)\right|_{\mathbb{R}^n} \ge (1-\Psi)\dist (y, z).
\end{equation}
Thus we get (\ref{eq:concl}) because of (\ref{eq:helderest}) and (\ref{eqtransholder}). Finally (\ref{eq:almostisom}) comes directly from invariance of domain (see also \cite[Theorem 2.9 and Remark 2.10]{KM}). 
%(see also \cite{BPS}) (note that similarly we see that $F|_B$  gives a $\Psi$-Gromov-Hausdorff approximation to the image which is also $\Psi$-Hausdorff close to $B(F(x), 1)$).
\end{proof}
%\begin{remark}\label{rem:global}
%For Proposition \ref{prop:lowerholderndim}, by the proof, if we replace (\ref{eq:99ii}) by
%\begin{equation}\label{eq:99iij}
%\intav_{4B}\left| \langle \nabla f_i, \nabla f_j\rangle -\delta_{ij}\right|\di \meas + \|\Delta f_i\|_{L^{\infty}}  <\epsilon, \quad \forall i,j,
%\end{equation}
%then the conclusion (\ref{eq:concl}) can be improved to
%\begin{equation}\label{eq:concl2}
%(1-\Psi)\dist (y, z)^{1+\Psi} \le \left|F(y)-F(z)\right|,\quad \forall y, z \in B
%\end{equation}
%where $\Psi=\Psi(\epsilon|N)$.\footnote{SH:Let us recheck this later.}
%\end{remark}
\begin{corollary}\label{thm:canon_Reif}
Assume
\begin{equation}\label{eq:reifflat}
\dist_{\mathrm{pmGH}}\left( \left(X, \frac{1}{t}\dist, \frac{1}{\meas (B(y, t))}\meas, y\right), \left(\mathbb{R}^n, \dist_{\mathbb{R}^n}, \frac{1}{\omega_n}\mathcal{H}^n, 0_n\right)\right)<\epsilon,\quad \forall y \in 3rB,\,\,\forall t \in (0, r].
\end{equation}
If a regular map $F=(f_1,\ldots, f_n):4rB \to \mathbb{R}^n$ satisfies 
\begin{equation}\label{eq:split}
\intav_{4rB}\left|\langle \nabla f_i, \nabla f_j\rangle -\delta_{ij}\right|\di \meas +r\|\Delta f_i\|_{L^{\infty}} <\epsilon, \quad \forall i, j, 
\end{equation}
then $F|_{rB}$ gives a $\Psi r$-Gromov-Hausdorff approximation to the image with 
\begin{equation}
(1-\Psi)rB_{\mathbb{R}^n} \subset F(rB) \subset (1+\Psi)rB_{\mathbb{R}^n} 
\end{equation}
and
\begin{equation}\label{eq:holder}
(1-\Psi)\dist (y, z)^{1+\Psi}\le \left| F(y)-F(z)\right|_{\mathbb{R}^n}\le C(N)\dist (y, z),\quad \forall y, z \in rB,
\end{equation}
where $\Psi=\Psi(\epsilon, r| K, N)$ and $B_{\mathbb{R}^n}=B(F(x), 1)$. 
\end{corollary}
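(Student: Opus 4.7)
The plan is to derive Corollary \ref{thm:canon_Reif} from Proposition \ref{prop:lowerholderndim} by a rescaling argument, in the same spirit as the proof of Corollary \ref{corsharplip}. I would introduce the rescaled pointed metric measure space and map
\[
(\tilde X,\tilde\dist,\tilde\meas,\tilde x):=\left(X,\tfrac{1}{r}\dist,\tfrac{1}{\meas(rB)}\meas,x\right),\qquad \tilde F:=r^{-1}F=(\tilde f_1,\ldots,\tilde f_n),
\]
so that the unit ball $\tilde B:=\tilde B(\tilde x,1)$ in $\tilde X$ coincides with $rB$ and $4\tilde B=4rB$.

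Next, I would verify that the rescaling sends the data of the corollary exactly onto the hypotheses of Proposition \ref{prop:lowerholderndim}. The rescaled space is $\RCD(r^2K,N)$, hence $\RCD(-\epsilon,N)$ provided $r^2|K|\le\epsilon$, which is permitted by the $\Psi(\epsilon,r|K,N)$ convention. The scaling relations $|\tilde\nabla\tilde f_i|_{\tilde g}=|\nabla f_i|_g$ (so in particular $\langle\tilde\nabla\tilde f_i,\tilde\nabla\tilde f_j\rangle=\langle\nabla f_i,\nabla f_j\rangle$) and $\tilde\Delta\tilde f_i=r\Delta f_i$ transform \eqref{eq:split} into
\[
\intav_{4\tilde B}\bigl|\langle\tilde\nabla\tilde f_i,\tilde\nabla\tilde f_j\rangle-\delta_{ij}\bigr|\di\tilde\meas+\|\tilde\Delta\tilde f_i\|_{L^\infty}<\epsilon,
\]
which is \eqref{eq:99ii}. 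For the Reifenberg hypothesis, setting $\tilde t:=t/r\in(0,1]$ and using $\tilde\meas(\tilde B(y,\tilde t))=\meas(B(y,t))/\meas(rB)$, the normalization $\tfrac{1}{\tilde\meas(\tilde B(y,\tilde t))}\tilde\meas$ coincides with $\tfrac{1}{\meas(B(y,t))}\meas$, so the condition transfers directly to the Reifenberg hypothesis of Proposition \ref{prop:lowerholderndim} on $\tilde B$.

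Applying that proposition then yields a $\Psi_0$-Gromov--Hausdorff approximation, the inclusion $(1-\Psi_0)B_{\mathbb R^n}\subset\tilde F(\tilde B)\subset(1+\Psi_0)B_{\mathbb R^n}$, and the bi-H\"older--Lipschitz estimate on $\tilde B$, with $\Psi_0=\Psi(\epsilon|N)$. Scaling back by $r$ immediately delivers the $\Psi_0 r$-GH approximation, the stated inclusions $(1-\Psi_0)rB_{\mathbb R^n}\subset F(rB)\subset(1+\Psi_0)rB_{\mathbb R^n}$, and the upper Lipschitz bound $|F(y)-F(z)|_{\mathbb R^n}\le C(N)\dist(y,z)$. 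The H\"older lower bound rescales to
\[
(1-\Psi_0)\,r^{-\Psi_0}\dist(y,z)^{1+\Psi_0}\le|F(y)-F(z)|_{\mathbb R^n};
\]
since $\dist(y,z)\le 2r$ on $rB$, the prefactor $r^{-\Psi_0}$ and the small exponent change are absorbed by enlarging $\Psi_0$ to a new $\Psi=\Psi(\epsilon,r|K,N)$, giving \eqref{eq:holder}. The only real bookkeeping obstacle is this last absorption together with the routine curvature-rescaling check; everything else is a direct consequence of Proposition \ref{prop:lowerholderndim}.
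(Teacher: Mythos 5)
Your proposal is correct and follows exactly the same route as the paper: rescale to the unit ball, apply Proposition \ref{prop:lowerholderndim}, and rescale back, absorbing the $r^{2}|K|$ curvature defect into the $\Psi(\epsilon,r|K,N)$ notation. The paper states this in a single sentence; you have merely filled in the bookkeeping.

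One small remark on the final step: the prefactor $r^{-\Psi_{0}}$ that appears when you undo the scaling is $\ge 1$ (since $r\le 1$ and $\Psi_{0}>0$), so it \emph{improves} the lower bound rather than threatening it. You may therefore simply drop it and take $\Psi=\Psi_{0}$ directly; no absorption via enlarging the exponent is needed. The only genuine reason the constant becomes $\Psi(\epsilon,r|K,N)$ rather than $\Psi(\epsilon|N)$ is the curvature rescaling $K\mapsto r^{2}K$, which you have already handled correctly.
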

\begin{proof}
The conclusion follows from applying Proposition \ref{prop:lowerholderndim} for the rescaled objects;
\begin{equation}
\left(X, \frac{1}{r}\dist, \frac{1}{\meas (rB)}\meas, x\right), \quad F_r:=\frac{1}{r}\cdot F
\end{equation}
because the rescaled space is an $\RCD (r^2K, N)$ space, thus %we can apply Proposition \ref{prop:lowerholderndim} after replacing $\epsilon$ by $\delta$ there by $\delta:=\max{\epsilon, r^2K}, where 
the conclusions hold for $\Psi= \Psi(\max\{\epsilon, r^2|K|\} |K, N)=\Psi(\epsilon, r|K, N)$.
\end{proof}
Next let us discuss a general regular map $F:4B \to \mathbb{R}^k$.
\begin{proposition}\label{prop:bihol}
If $(X, \dist, \meas)$ is an $\RCD(-\epsilon, N)$ space with (\ref{eq:reifflat0})
and a regular map $F=(f_1, \ldots, f_k):4B \to \mathbb{R}^k$ satisfies
\begin{equation}
\intav_{4B}\left|g-F^*g_{\mathbb{R}^k}\right|\di \meas+\|\Delta f_i\|_{L^{\infty}}<\epsilon,\quad \forall i,
\end{equation}
then
\begin{equation}
(1-\Psi)\dist(y, z)^{1+\Psi} \le \left| F(y)-F(z)\right|_{\mathbb{R}^k}\le C(N, k)\dist (y, z),\quad \forall y, z \in B,
\end{equation}
where $\Psi=\Psi(\epsilon| N, k)$.
\end{proposition}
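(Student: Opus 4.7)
The strategy is to reduce to the $k=n$ case already handled by Proposition \ref{prop:lowerholderndim} via the diagonalization trick that was carried out in the proof of Lemma \ref{lem:sharplipbd}. Since Proposition \ref{prop:lowerholderndim} requires a genuine splitting map (namely, $n$ almost-orthonormal directions), the key point is to extract such a map from $F$ by a rotation in $\mathbb{R}^k$ and then throw away the ``extra'' $k-n$ coordinates for the purpose of the lower bound.

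\smallskip
\noindent\textbf{Upper bound.} From the identity $|g|^2\equiv n$ (see \eqref{eq:essdim}) and $|F^{*}g_{\mathbb{R}^k}|\ge k^{-1/2}\tr(F^{*}g_{\mathbb{R}^k})=k^{-1/2}\sum_i|\nabla f_i|^2$, the hypothesis $\intav_{4B}|g-F^{*}g_{\mathbb{R}^k}|\di\meas<\epsilon$ combined with $\epsilon<1$ yields
\begin{equation*}
\sum_{i=1}^k\intav_{4B}|\nabla f_i|^2\di\meas\le C(N,k).
\end{equation*}
Together with $\|\Delta f_i\|_{L^\infty}<\epsilon<1$ and the fact that here $K=-\epsilon$ is bounded, Corollary \ref{lem:lipcont} gives that $F|_{B}$ is $C(N,k)$-Lipschitz, which is the upper bound asserted in the statement.

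\smallskip
\noindent\textbf{Lower bound.} Repeating the first part of the proof of Lemma \ref{lem:sharplipbd} verbatim (the pointwise identity $|F^{*}g_{\mathbb{R}^k}|^2=|(F^{*}g_{\mathbb{R}^k})^2|$ combined with $\intav_{4B}|g-F^{*}g_{\mathbb{R}^k}|\di\meas<\epsilon$ and the Lipschitz bound above), the matrix $G_{F,4B}$ satisfies $|G_{F,4B}-G_{F,4B}^{2}|<\Psi$ with $\Psi=\Psi(\epsilon|N,k)$, so each eigenvalue lies $\Psi$-close to $\{0,1\}$. Using $|g|^2=n$ and $\intav_{4B}\langle F^{*}g_{\mathbb{R}^k},g\rangle\di\meas\approx\intav_{4B}|g|^2\di\meas=n$, exactly $n$ eigenvalues are $\Psi$-close to $1$. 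Choose $P\in O(k)$ diagonalizing $G_{F,4B}$ and form $F_P=F\cdot P^t=(f_{P,1},\dots,f_{P,k})$. Then
\begin{equation*}
\intav_{4B}\bigl|\langle\nabla f_{P,i},\nabla f_{P,j}\rangle-\delta_{ij}\bigr|\di\meas<\Psi\quad(i,j\le n),\qquad\|\Delta f_{P,i}\|_{L^\infty}\le\sqrt{k}\,\epsilon,
\end{equation*}
so the first $n$ components $F_P^n:=(f_{P,1},\dots,f_{P,n})$ satisfy the hypotheses of Proposition \ref{prop:lowerholderndim}.

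\smallskip
\noindent\textbf{Conclusion.} Applying Proposition \ref{prop:lowerholderndim} to $F_P^n$ on $4B$ yields
\begin{equation*}
(1-\Psi)\dist(y,z)^{1+\Psi}\le\bigl|F_P^n(y)-F_P^n(z)\bigr|_{\mathbb{R}^n},\qquad\forall y,z\in B.
\end{equation*}
Since $P$ is orthogonal, $|F(y)-F(z)|_{\mathbb{R}^k}=|F_P(y)-F_P(z)|_{\mathbb{R}^k}\ge|F_P^n(y)-F_P^n(z)|_{\mathbb{R}^n}$, giving the desired Hölder lower bound.

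\smallskip
The only delicate point I expect is verifying the count of eigenvalues of $G_{F,4B}$ near $1$, i.e., the identification $\ell=n$: this is where the essential dimension of $(X,\dist,\meas)$ genuinely enters through \eqref{eq:essdim}. Once this is in place, the diagonalization reduces the general $k$ case to the already-established splitting case of Proposition \ref{prop:lowerholderndim} without further work, and no uniform Reifenberg hypothesis on the rotated map is needed beyond the one already assumed on $(X,\dist,\meas)$.
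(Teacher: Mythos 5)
Your proof is correct and follows essentially the same route as the paper's: both diagonalize $G_{F,4B}$ via some $P\in O(k)$, identify exactly $n$ eigenvalues near $1$ through the identity $|g|^2\equiv n$, apply Proposition \ref{prop:lowerholderndim} to the first $n$ rotated components, and recover the full lower bound from orthogonality, while the Lipschitz upper bound comes from Corollary \ref{lem:lipcont}. You spell out the $L^2$-gradient bound needed for Corollary \ref{lem:lipcont} a bit more explicitly than the paper does, but this is the same argument.
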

\begin{proof}
By an argument similar to the proof of Lemma \ref{lem:sharplipbd} there exists $P \in O(k)$ such that the map $F_P=FP^t=(f_{P, 1},\ldots, f_{P,k}):4B \to \mathbb{R}^k$ satisfies
\begin{equation}
\intav_{4B}\left|\langle \nabla f_{P, i}, \nabla f_{P, j}\rangle -\delta_{ij}\right|\di \meas <\Psi, \quad \forall i, j \in \mathbb{N}_{\le n}.
\end{equation} 
Thus we can apply Proposition \ref{prop:lowerholderndim} for the map $F_P^n:=(f_{P, 1},\ldots, f_{P, n})$ to get;
\begin{equation}
(1-\Psi)\dist (y, z)^{1+\Psi}\le \left|F_P^n(y)-F_P^n(z)\right|_{\mathbb{R}^n} \le \left|F_P(y)-F_P(z)\right|_{\mathbb{R}^k} = \left|F(y)-F(z)\right|_{\mathbb{R}^k}
\end{equation}
for all $y, z \in B$. The desired $C(N, k)$-Lipschitz continuity of $F$ is a direct consequence of Corollary \ref{lem:lipcont} with (\ref{eq:reifflat0}).
\end{proof}
As in the proof of Corollary \ref{thm:canon_Reif}, a rescaling argument with Proposition \ref{prop:bihol} allows us to prove the following, where we omit the proof.
\begin{corollary}\label{cor:reifen}
If (\ref{eq:reifflat}) holds and a regular map $F=(f_1, \ldots, f_k):4rB \to \mathbb{R}^k$ satisfies
\begin{equation}\label{eq:almostiso}
\intav_{4rB}\left| g-F^*g_{\mathbb{R}^k}\right|\di \meas + r\left\|\Delta f_i\right\|_{L^{\infty}} <\epsilon, \quad \forall i,
\end{equation}
then
\begin{equation}\label{eq:holderreifenberg}
(1-\Psi)\dist (y, z)^{1+\Psi} \le \left|F(y)-F(z)\right|\le C(N, k)\dist (y, z)\quad \forall y, z \in rB,
\end{equation}
where $\Psi=\Psi(\epsilon, r|K, N, k)$.
\end{corollary}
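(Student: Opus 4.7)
The plan is to reduce the statement to Proposition \ref{prop:bihol} by a rescaling argument, exactly as in the proof of Corollary \ref{thm:canon_Reif} but with Proposition \ref{prop:bihol} in place of Proposition \ref{prop:lowerholderndim}. First I would introduce the rescaled data
\begin{equation*}
(\tilde X,\tilde\dist,\tilde\meas,\tilde x):=\Bigl(X,\tfrac{1}{r}\dist,\tfrac{1}{\meas(rB)}\meas,x\Bigr),\qquad \tilde F:=\tfrac{1}{r}\,F=(\tilde f_1,\ldots,\tilde f_k),
\end{equation*}
and observe that $(\tilde X,\tilde\dist,\tilde\meas)$ is an $\RCD(r^2K,N)$ space, hence in particular an $\RCD(-\epsilon',N)$ space with $\epsilon':=\max\{\epsilon,r^2|K|\}$. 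Writing $\tilde B=\tilde B(\tilde x,1)$, note that $\tilde B$ corresponds to $rB$ and $4\tilde B$ to $4rB$.

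Next I would verify that the hypotheses of Proposition \ref{prop:bihol} hold on $\tilde B$ with parameter $\epsilon'$. The Reifenberg flatness assumption (\ref{eq:reifflat0}) on $3\tilde B$ at every scale $t\in(0,1]$ is exactly the rescaled form of (\ref{eq:reifflat}) on $3rB$ at scales in $(0,r]$. The integral average of the pullback error is scale invariant, precisely as already used in the proof of Corollary \ref{corsharplip}, so
\begin{equation*}
\intav_{4\tilde B}\bigl|\tilde g-\tilde F^*g_{\mathbb R^k}\bigr|\di\tilde\meas=\intav_{4rB}\bigl|g-F^*g_{\mathbb R^k}\bigr|\di\meas<\epsilon\le\epsilon'.
\end{equation*}
Finally, since $\tilde\Delta=r^2\Delta$ and $\tilde f_i=r^{-1}f_i$, one has $\|\tilde\Delta\tilde f_i\|_{L^\infty}=r\|\Delta f_i\|_{L^\infty}<\epsilon\le\epsilon'$, which together with the preceding bound verifies (\ref{eq:almostiso}) in the rescaled form.

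Applying Proposition \ref{prop:bihol} then yields, with $\Psi_0=\Psi(\epsilon'|N,k)=\Psi(\epsilon,r|K,N,k)$,
\begin{equation*}
(1-\Psi_0)\tilde\dist(y,z)^{1+\Psi_0}\le|\tilde F(y)-\tilde F(z)|_{\mathbb R^k}\le C(N,k)\,\tilde\dist(y,z),\qquad\forall y,z\in\tilde B.
\end{equation*}
Unrescaling, i.e. multiplying by $r$ and using $\tilde\dist=r^{-1}\dist$, we recover on $rB$ the upper bound $|F(y)-F(z)|\le C(N,k)\dist(y,z)$ and the lower bound $|F(y)-F(z)|\ge (1-\Psi_0)r^{-\Psi_0}\dist(y,z)^{1+\Psi_0}$. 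Because $\dist(y,z)\le 2r$ and both $(1-\Psi_0)r^{-\Psi_0}\to 1$ and $\Psi_0\to 0$ as $\epsilon\to 0$ for fixed $r,K$, the lower bound can be rewritten in the desired form $(1-\Psi)\dist(y,z)^{1+\Psi}$ after enlarging $\Psi$ so as to absorb the residual factor $r^{-\Psi_0}$ and the small shift in the exponent, obtaining a new $\Psi=\Psi(\epsilon,r|K,N,k)$. There is no substantial obstacle in this argument; the only mild point is this bookkeeping of the rescaling residuals into a single $\Psi$ depending on both $\epsilon$ and $r$, which is standard and already appears in Corollary \ref{thm:canon_Reif}.
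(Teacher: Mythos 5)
Your proof is correct and matches the paper's intent exactly: the paper omits the proof of Corollary~\ref{cor:reifen}, stating only that it follows from Proposition~\ref{prop:bihol} by the same rescaling argument as Corollary~\ref{thm:canon_Reif}, which is precisely what you carried out. The only minor over-caution is the final bookkeeping step: since $r\in(0,1)$ and $\Psi_0\ge 0$, the residual factor $r^{-\Psi_0}$ is $\ge 1$, so it can simply be dropped from the lower bound rather than absorbed into $\Psi$.
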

%Finally let us rewrite the results obtained above in terms of Riemannian manifolds. We omit the proof.
%\begin{corollary}\label{cor:reifen}
%Assume that (\ref{eq:reifflat}) holds.
%Let $(M^l, g_{M^l})$ be an $l$-dimensional (not necessary complete) Riemannian manifold with an isometric embedding
%$
%\Phi=(\phi_1,\ldots, \phi_k):M^l \hookrightarrow \mathbb{R}^k.
%$
%f a regular map $F:3rB \to M^l$ satisfies
%\begin{equation}
%\intav_{3rB}\left| g-F^*g_{M^l}\right|\di \meas+r\|\Delta f_i\|_{L^{\infty}}<\epsilon, \quad \forall i,
%\end{equation}
%then 
%\begin{equation}
%(1-\Psi)\dist(y, z)^{1+\Psi}\le \dist_{g_{M^l}}(F(y), F(z)), \quad \forall y, z \in sB,
%\end{equation}
%for any $s \in (0, r]$ satisfying that $\Phi|_{B(F(x), C(N)s)}$ is an $(1\pm \epsilon)$-bi-Lipschitz embedding,
%and 
%\begin{equation}
%B(F(x), (1-\Psi)s)\subset F(B(x, s)) \subset B(F(x), (1+\Psi)s)
%\end{equation}
%where $\Psi=\Psi(\epsilon, s|K, N, k)$ and $\Phi \circ F=(f_1,\ldots, f_k)$.
%Moreover if in addition, 
%\begin{equation}
%r^2\|\nabla \Delta f_i\|_{L^{\infty}}<\epsilon, \quad \forall i,
%\end{equation}
%then 
%\begin{equation}
%(1-\Psi)\dist(y, z)^{1+\Psi}\le \dist_{g_{M^l}}(F(y), F(z)) \le (1+\Psi)\dist (y, z),\quad \forall y, z \in sB.
%\end{equation}
%\end{corollary}
\section{Proof of Theorem \ref{thm:canonicaldiff}}\label{section5}
We are now in a position to prove Theorem \ref{thm:canonicaldiff}.
\begin{proof}[Proof of (\ref{1}) of Theorem \ref{thm:canonicaldiff}]
Let us recall statements we want to prove; if $F_i:X_i \to M^N$ is equi-regular with
\begin{equation}\label{eeeqqrrcc}
\intav_{X_i}\left|g_i-F_i^*g_{M^N}\right| \di \meas_i\to 0,
\end{equation}
then;
\begin{enumerate}[I]
\item\label{home} the map $F_i$ gives an $\epsilon_i$-Gromov-Hausdorff approximation for some $\epsilon_i \to 0^+$;
\item\label{bihole} for any $\epsilon \in (0, 1)$ there exists $i_0 \in \mathbb{N}$ such that for any $i \ge i_0$, $F_i$ gives a homeomorphism with
\begin{equation}\label{eq:global}
(1-\epsilon)\dist_i(x, y)^{1+\epsilon}\le \dist_g(F_i(x), F_i(y)) \le C(N)\dist_i(x, y),\quad \forall x, y \in X_i.
\end{equation}
Moreover if each $(X_i, \dist_i)$ is isometric to an $N$-dimensional closed Riemannian manifold $(M^N_i, \dist_{g_i})$ and $F_i$ is smooth for any $i$, then $F_i$ is diffeomorphism for any sufficiently large $i$. 
\end{enumerate}

First let us prove (\ref{home}).
It is enough to check that $F_i$ has a convergent subsequence to an isometry $F:M^N \to M^N$ with respect to (\ref{eq:mghconv}).
Without loss of generality we can assume that $F_i$ converge to a Lipschitz map $F:M^N \to M^N$ because of Corollary \ref{cor:reifen}.

Let $\Phi=(\phi_1, \ldots, \phi_k):M^N \hookrightarrow \mathbb{R}^k$ be a smooth isometric embedding, where $k$ depends only on $N$.
Denoting by $\Phi \circ F=(f_1,\ldots, f_k)$, the equi-regularity for $F_i$ implies $\Delta f_i \in L^{\infty}$ for any $i$. Thus (for instance by the $L^p$-Calder\'on-Zygmund inequality (\ref{eq:cz}) on $M^N$) $f_i \in C^1(M^N)$. Moreover (\ref{eeeqqrrcc}) yields $g=F^*g$.
In particular $F$ is a local homeomorphism and $1$-Lipschitz. Thus invariance of domain shows that $F(M^N)$ is open, thus $F(M^N)=M^N$ because the compactness of $M^N$ implies that $F(M^N)$ is closed. Recalling that any surjective $1$-Lipschitz map from a compact metric space to itself is actually an isometry, $F$ is an isometry. Thus we have  (\ref{home}).

Next let us prove (\ref{bihole}).
Fix $\epsilon \in (0, 1)$. It follows from Theorem \ref{thm:unifreifen} that there exist $i_0 \in \mathbb{N}$ and $r \in (0, 1)$ such that 
\begin{equation}\label{eq:reifflat1}
\dist_{\mathrm{pmGH}}\left( \left(X_i, \frac{1}{t}\dist_i, \frac{1}{\meas_i (B(x, t))}\meas_i, x\right), \left(\mathbb{R}^N, \dist_{\mathbb{R}^N}, \frac{1}{\omega_N}\mathcal{H}^N, 0_N\right)\right)<\epsilon,\quad \forall i \ge i_0,\,\,\forall x \in X_i,\,\,\forall t \in (0, r].
\end{equation}
and
\begin{equation}
\intav_{B(x, 4r)}\left| g_i-F_i^*g\right|\di \meas_i <\epsilon,\quad \forall i \ge i_0,\,\,\forall x \in X_i.
\end{equation}
Thus Corollary \ref{cor:reifen} shows  
\begin{equation}\label{eq:global2}
(1-\Psi)\dist_i(x, y)^{1+\Psi}\le \dist_g(F_i(x), F_i(y)) \le C(N)\dist_i(x, y), \quad \forall i \ge i_0,\,\,\forall x, y \in X_i \text{ with $\dist_i(x, y) \le r$},
\end{equation}
where $\Psi=\Psi(\epsilon, r|K, N, L)$.
By (\ref{home}), there exists $i_1 \in \mathbb{N}_{\ge i_0}$ such that 
\begin{equation}\label{eq:global3}
(1-\Psi)\dist_i(x, y)^{1+\Psi}\le \dist_g(F_i(x), F_i(y)) \le C(N)\dist_i(x, y),\quad \forall i \ge i_1,\,\,\forall x, y \in X_i.
\end{equation}
In particular $F_i$ is injective and a local homeomorphism.  Since $X_i$ is a topological $N$-dimensional manifold, invariance of domain yields that $F_i$ is a homeomorphism.

Finally let us prove the remaining statement for the diffeomorphism, where we just follow the same argument as discussed in \cite[Proposition 4.2]{WangZhao}. Assume that each  $(X_i, \dist_i)$ is isometric to an $N$-dimensional closed Riemannian manifold and that $F_i$ is smooth. The proof is done by a contradiction. If $F_i$ is not a diffeomorphism for any sufficiently large $i$, then for such an $i$, there exists $x \in X_i$ such that the differential
\begin{equation}
DF_i|_x: T_xX_i \to T_{F_i(x)}M^N
\end{equation}
is not an isomorphism. Then by Taylor's theorem it is easy to find a positive constant $C \in (0, \infty)$ and a sequence $y_j \to x$ in $X_i$ with $y_j \neq x$ and
\begin{equation}
\dist_g(F_i(x), F_i(y_j))\le C\dist_i(x, y_j)^2,\quad \forall j.
\end{equation}
Thus
\begin{equation}
\frac{1}{2} \cdot \dist_i(x, y_j)^{3/2} \le C\dist_i(x, y_j)^2,
\end{equation}
namely
\begin{equation}\label{eq:2order}
0<\frac{1}{2} \le C\dist_i(x, y_j)^{1/2} \to 0^+,\quad (j \to \infty),
\end{equation}
which is a contradiction.
\end{proof}
\begin{proof}[Proof of (\ref{2}) of Theorem \ref{thm:canonicaldiff}]
Under the assumption of (\ref{2}) (namely, fixing a uniform convergence of equi-regular maps $\Phi_i$ to a smooth isometric embedding $\Phi$), we know
\begin{equation}\label{1000}
\int_{X_i}\left|g_i-\Phi_i^*g_{\mathbb{R}^k}\right|\di \meas_i \to \int_{M^N}\left|g-\Phi^*g_{\mathbb{R}^k}\right|\di \mathcal{H}^K=0.
\end{equation}
Fix $\epsilon \in (0, 1)$.
Then by an argument similar to the proof of (\ref{eq:global2}), Corollary \ref{corsharplip} with (\ref{1000}) shows that there exist $r \in (0, 1)$ and $i_0 \in \mathbb{N}$ such that 
\begin{equation}
|\Phi_i(x)-\Phi_i(y)|_{\mathbb{R}^k}\le (1+\epsilon)\dist_i (x, y),\quad \forall i \ge i_0,\,\,\forall x, y \in X_i\,\,\text{with $\dist_i(x, y)\le r$.}
\end{equation}
On the other hand the $C^{1, 1}$-regularity of $\pi$ with (\ref{eq:leipnit}) and (\ref{1000}) shows that $F_i$ is equi-regular with
\begin{equation}
\intav_{X_i}\left| g_i-F_i^*g\right|\di \meas_i \to 0.
\end{equation}
Moreover since $\pi$ is locally $(1+\epsilon)$-Lipschitz on a neighborhood of $\Phi (M^N)$ because $\pi$ is a Riemannian submersion, there exists $i_1 \in \mathbb{N}_{\ge i_0}$ such that 
\begin{equation}\label{eq:2000}
\dist_g(F_i(x), F_i(y))\le (1+\epsilon)^2 \dist_i (x, y),\quad \forall i \ge i_1,\,\,\forall x, y \in X_i\,\,\text{with $\dist_i(x, y)\le r$}.
\end{equation}
Thus since $F_i$ gives an $\epsilon_i$-Gromov-Hausdorff approximation by (\ref{1}), it follows from (\ref{eq:2000}) that there exists $i_2 \in \mathbb{N}_{\ge i_1}$ such that 
\begin{equation}\label{eq:20001}
\dist_{g}(F_i(x), F_i(y))\le (1+\epsilon)^3 \dist_i (x, y),\quad \forall i \ge i_2,\,\,\forall x, y \in X_i,
\end{equation}
which proves (\ref{2}).
\end{proof}
\begin{proof}[Proof of (\ref{3}) of Theorem \ref{thm:canonicaldiff}]
Let us follow the same arguments in \cite[Theorem 1.1]{H18}.
Let $\psi_j=\Delta \phi_j \in C^{\infty}(M^N)$ (recall that we fix a smooth isometric embedding $\Phi=(\phi_1,\ldots, \phi_k):M^N \hookrightarrow \mathbb{R}^k$). Without loss of generality we can assume
\begin{equation}
\int_{M^N}\phi_i\di \mathcal{H}^N=0.
\end{equation}
Then applying \cite[Lemma 2.10]{AmbrosioHonda2}, there exists a sequence of equi-Lipschitz functions $\psi_{i,j}:X_i \to\mathbb{R}$ such that $\psi_{i,j}$ $H^{1,2}$-strongly converge to $\psi_j$ with
\begin{equation}
\int_{X_i}\psi_{j, i}\di \meas_i=0.
\end{equation}
Therefore by the Poincar\'e-Sobolev inequality (c.f. (\ref{eq1068})) there exists a unique $\phi_{i, j} \in D(\Delta_i)$ such that $\Delta_i\phi_{i, j}=\psi_{i, j}$ with
\begin{equation}
\int_{X_i}\phi_{i, j}\di \meas_i=0.
\end{equation}
Applying \cite[Theorem 4.4]{AmbrosioHonda2},
after passing to a subsequence with no loss of generality we can assume that $\phi_{i, j}$ $H^{1,2}$-strongly converge to some $\tilde{\phi}_j \in D(\Delta)$ with $\Delta \tilde{\phi}_j=\psi_j$. Since 
\begin{equation}
\int_{M^N}\tilde{\phi}_j\di \mathcal{H}^N=\lim_{i \to \infty}\int_{X_i}\phi_{i, j}\di \meas_i=0,
\end{equation}
we have $\phi_j=\tilde{\phi}_j$. In particular the sequence of maps $\Phi_i=(\phi_{i, 1},\ldots, \phi_{i, k}):X_i \to \mathbb{R}^k$ converge uniformly to $\Phi$ and is equi-regular with
\begin{equation}\label{1001}
\sup_{i, j}\|\nabla_i\Delta_i \phi_{i, j}\|_{L^{\infty}}<\infty, \quad \int_{X_i}\left|g_i-\Phi_i^*g_{\mathbb{R}^k}\right|\di \meas_i \to 0,
\end{equation}
%Then recalling that there exist $\epsilon_i \to 0^+$ and $\delta_i \to 0^+$ such that $\pi|_{B(\Phi(M^N), \epsilon_i)}$ is $(1+\delta_i)$-Lipschitz, it follows from (\ref{2}) that  
which proves (\ref{a}).
Moreover we also have (\ref{b}) because we can choose $\psi_{i, j}$ as a smooth function when $(X_i, \dist_i)$ is isometric to a smooth closed Riemannian manifold, with the elliptic regularity theorem.
\end{proof}
Finally in order to show that Theorem \ref{thm:canonicaldiff} is sharp (see Remark \ref{rem:sharp}), let us prepare the following.
\begin{proposition}\label{propbilip}
Let $(X, \dist, \mathcal{H}^N, x)$ be a pointed non-collapsed $\RCD(K, N)$ space and let $F:B(x, 1) \to M^k$ be a regular map into an $k$-dimensional (not necessarily complete) Riemannian manifold $(M^k, g_{M^k})$.
If $F$ is bi-Lipschitz, then $B(x, 1) \subset \mathcal{R}_N$.
\end{proposition}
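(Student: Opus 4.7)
The plan is to combine a blow-up at each point $y \in B(x,1)$ with the characterization of $\mathbb{R}^N$ from \cite{HS}. First, by composing with a smooth isometric embedding $M^k \hookrightarrow \mathbb{R}^l$, I would reduce to the case $F = (f_1, \ldots, f_k): B(x,1) \to \mathbb{R}^k$; using (\ref{eq:leipnit}), the composition remains regular, with $\Delta f_j \in L^{\infty}$, and bi-Lipschitz onto its image. Next, for a fixed $y \in B(x,1)$ and $r_i \to 0^+$, I would consider the rescaled pointed spaces and maps
\begin{equation*}
(X_i, \dist_i, \meas_i, y) := \left(X,\ r_i^{-1}\dist,\ \mathcal{H}^N(B(y, r_i))^{-1}\mathcal{H}^N,\ y\right), \qquad F_i := r_i^{-1}\bigl(F - F(y)\bigr).
\end{equation*}
Each $(X_i, \dist_i, \meas_i)$ is then a non-collapsed $\RCD(r_i^2 K, N)$ space (up to positive rescaling of the reference measure), the $F_i$ inherit the bi-Lipschitz constants of $F$, and a direct scaling identity gives $\|\Delta_i f_{i,j}\|_{L^\infty} = r_i \|\Delta f_j\|_{L^\infty} \to 0$.

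After extracting a subsequence, $(X_i, \dist_i, \meas_i, y)$ will pmGH-converge to a tangent cone $(Y, \dist_Y, \meas_Y, o_Y)$, a non-collapsed $\RCD(0, N)$ metric measure cone thanks to \cite{DePhillippisGigli}. Applying the stability of Sobolev functions from \cite{AmbrosioHonda, AmbrosioHonda2}, the equi-Lipschitz sequence $F_i$ with vanishing Laplacians should subconverge locally uniformly and $H^{1,2}_{\rm loc}$-strongly to a Lipschitz harmonic map $F_\infty: Y \to \mathbb{R}^k$. Passing the pointwise two-sided bi-Lipschitz inequality through the $\epsilon_i$-GH approximations (a routine matter since both sides are continuous functions of distances), $F_\infty$ becomes a bi-Lipschitz harmonic embedding of the non-collapsed $\RCD(0, N)$ space $Y$ into $\mathbb{R}^k$, and in particular is itself regular in the sense of Definition \ref{def:regular}.

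At this stage I would invoke the characterization from \cite{HS}: the existence of such a bi-Lipschitz regular map on a non-collapsed $\RCD(0, N)$ space forces the domain to be isometric to $(\mathbb{R}^N, \dist_{\mathbb{R}^N})$ with reference measure a positive multiple of $\mathcal{H}^N$. Hence every tangent cone of $(X, \dist, \mathcal{H}^N)$ at $y$ equals $\mathbb{R}^N$, which by definition of the $N$-dimensional regular set places $y$ in $\mathcal{R}_N$. Since $y \in B(x,1)$ was arbitrary, the conclusion $B(x,1) \subset \mathcal{R}_N$ follows.

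The main obstacle I foresee lies in pinpointing and correctly invoking the precise statement of the rigidity in \cite{HS}: the argument requires that a non-collapsed $\RCD(0, N)$ space admitting a bi-Lipschitz regular map into some Euclidean space must itself be Euclidean of essential dimension exactly $N$. Once this rigidity is granted, the remaining steps, namely the scaling of the Laplacian, the stability of Sobolev functions under pmGH convergence, and the passage of the bi-Lipschitz bound to the limit, are essentially bookkeeping using tools already developed in sections \ref{section3} and \ref{section4}.
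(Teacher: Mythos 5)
Your proposal is correct and follows essentially the same route as the paper's proof: compose with a smooth isometric embedding into Euclidean space, rescale at each point (the paper reduces w.l.o.g.\ to the center $x$), use the vanishing of $r_i\|\Delta f_j\|_{L^\infty}$ to get a harmonic bi-Lipschitz map into $\mathbb{R}^l$ on the tangent cone, and invoke \cite[Corollary 4.10]{HS} to identify the tangent cone with $\mathbb{R}^N$. The only cosmetic slip is writing $\mathbb{R}^k$ for the target of the embedded map where it should be $\mathbb{R}^l$.
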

\begin{proof}
It is enough to prove $x \in \mathcal{R}_N$. Fix a sequence $r_i \to 0^+$ and a smooth isometric embedding $\Phi:M^k \hookrightarrow \mathbb{R}^l$. After passing to a subsequence there exist a pointed non-collapsed $\RCD(0, N)$ space $(Z, \dist_Z, \meas_Z, p)$ and a harmonic map $\tilde{\Phi}:Z \to \mathbb{R}^l$ such that 
\begin{equation}\label{eq:tangen}
\left(X, \frac{1}{r_i}\dist, \mathcal{H}^N, x\right) \stackrel{\mathrm{pmGH}}{\to} (Z, \dist_Z, \mathcal{H}^N, p)
\end{equation}
and that $r^{-1}_i(\Phi \circ F-\Phi \circ F(x))$ locally uniformly converge to $\tilde{\Phi}$ with respect to (\ref{eq:tangen}). In particular $\tilde{\Phi}$ is a bi-Lipschitz embedding. Thus applying \cite[Corollary 4.10]{HS} yields that $(Z, \dist_Z, \mathcal{H}^N, p)$ is isometric to $(\mathbb{R}^N,\dist_{\mathbb{R}^N}, \mathcal{H}^N, 0_N)$. Therefore we conclude.
\end{proof}
\section{Canonical topological sphere and torus theorems}\label{section6}
As applications of Theorem \ref{thm:canonicaldiff}, let us introduce new canonical topological stability results.
First let us give a proof of  Theorem \ref{thm:hemisphere}. By a sequential compactness of $\RCD(K, N)$ spaces with respect to the measured Gromov-Hausdorff convergence,  it is enough to check the following. %Note that this also gives a generalization of a result obtained in \cite[Theorem 1.1]{WangZhao} even in the smooth setting because we do not assume $K=n-1$. 
\begin{theorem}\label{thm:spherecanonical}
Let $K \in \mathbb{R}, N \in \mathbb{N}$ and let 
\begin{equation}
(X_i, \dist_i, \meas_i) \stackrel{\mathrm{mGH}}{\to} (\mathbb{S}^N, \dist_{\mathbb{S}^N}, \mathcal{H}^N)
\end{equation}
be a measured Gromov-Hausdorff convergent sequence of compact $\RCD(K, N)$ spaces. Then for any sufficiently large $i$, the map $F_i:X_i \to \mathbb{S}^N$;
\begin{equation}
F_i:=\left(\sum_{j=1}^{N+1}f_{i, j}^2\right)^{-1/2}\cdot \left(f_{i, 1},\ldots, f_{i, N+1}\right), \quad \text{where}\,\,\,\intav_{X_i}f_{i, j}^2\di \meas_i =\frac{1}{N+1}, \quad \forall j,
\end{equation}
gives an well-defined homeomorphism and an $\epsilon_i$-Gromov-Hausdorff approximation with
\begin{equation}\label{eq:bihlip}
(1-\epsilon_i)\dist_i (x, y)^{1+\epsilon_i}\le \dist_{\mathbb{S}^N}(F_i(x), F_i(y)) \le (1+\epsilon_i) \dist_i(x, y),\quad \forall x, y \in X_i
\end{equation}
for some $\epsilon_i \to 0^+$, where $f_{i, j}$ is an eigenfunction of $-\Delta_i$ with the $j$-th eingenvalue $\lambda_{i, j}$.
\end{theorem}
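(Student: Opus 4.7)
The plan is to realize $F_i = \iota^{-1}\circ \pi \circ \Phi_i$, where $\Phi_i:=(f_{i,1}, \ldots, f_{i,N+1}):X_i \to \mathbb{R}^{N+1}$, $\iota:\mathbb{S}^N \hookrightarrow \mathbb{R}^{N+1}$ is the standard embedding, and $\pi(y):=y/|y|_{\mathbb{R}^{N+1}}$ is the nearest-point projection from a neighborhood of $\mathbb{S}^N$ onto $\mathbb{S}^N$, so that the conclusion follows from part (\ref{2}) of Theorem \ref{thm:canonicaldiff} applied to these data.

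First I would identify the limits. The first nontrivial eigenspace of $-\Delta_{\mathbb{S}^N}$ has eigenvalue $N$ and is spanned by the coordinate functions $x_1, \ldots, x_{N+1}$, which are $L^2$-orthogonal with $\int_{\mathbb{S}^N}x_j^2\di \mathcal{H}^N=\mathcal{H}^N(\mathbb{S}^N)/(N+1)$, matching the normalization in the theorem statement. Spectral stability for $\RCD$ spaces under mGH convergence (\cite{AmbrosioHonda, AmbrosioHonda2, GigliMondinoSavare13}) then yields $\lambda_{i, j}\to N$ for $j=1, \ldots, N+1$ and, after passing to a subsequence, $H^{1,2}$-strong convergence of $f_{i, j}$ to some $L^2$-orthonormal basis $(f_1, \ldots, f_{N+1})$ of this eigenspace. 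Any such basis differs from $(x_1, \ldots, x_{N+1})$ by an orthogonal matrix $O\in O(N+1)$. Since composing $F_i$ with the isometry of $\mathbb{S}^N$ induced by $O$ affects neither the $\epsilon_i$-Gromov-Hausdorff approximation property nor the bounds (\ref{eq:bihlip}), I may replace $\Phi_i$ by $\Phi_i O^T$ and hence assume $\Phi_i \to \iota$ uniformly.

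Next I would verify the hypotheses of part (\ref{2}) of Theorem \ref{thm:canonicaldiff}. Since $\Delta_i f_{i, j}=-\lambda_{i, j}f_{i, j}$, both the equi-regularity of $\Phi_i$ and the equi-Lipschitz continuity of $\Delta_i f_{i, j}$ reduce to a uniform Lipschitz bound on the eigenfunctions. For this I would first recall the $L^{\infty}$ bound for eigenfunctions on compact $\RCD(K, N)$ spaces with uniformly bounded diameter (since $\mathrm{diam}(X_i)\to \mathrm{diam}(\mathbb{S}^N)$) via Gaussian heat kernel estimates, giving $\sup_{i, j}\|\Delta_i f_{i, j}\|_{L^{\infty}}<\infty$, and then apply Proposition \ref{prop:regulargradient} on a finite cover of $X_i$ by balls of a fixed radius $r<\mathrm{diam}(\mathbb{S}^N)/2$, whose complements are nonempty by the diameter bound. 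The projection $\pi(y):=y/|y|_{\mathbb{R}^{N+1}}$ is analytic on $U:=\{1/2<|y|_{\mathbb{R}^{N+1}}<2\}$ and its differential at $y\in \mathbb{S}^N$ is the orthogonal projection onto $T_y\mathbb{S}^N$, hence $\pi$ is a $C^{1, 1}$ Riemannian submersion with $\pi|_{\mathbb{S}^N}=\mathrm{id}_{\mathbb{S}^N}$. The uniform convergence $\Phi_i\to \iota$ places $\Phi_i(X_i)\subset U$ for all sufficiently large $i$, so $F_i=\iota^{-1}\circ \pi \circ \Phi_i$ is well-defined and coincides with the map in the statement.

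With all hypotheses verified, part (\ref{2}) of Theorem \ref{thm:canonicaldiff} (combined with part (\ref{1}) for the homeomorphism and GH-approximation conclusions) immediately yields (\ref{eq:bihlip}) and the remaining claims. The one step I expect to require the most care is the equi-Lipschitz continuity of $\Delta_i f_{i, j}$: its proof rests on the two-step chain of a uniform $L^{\infty}$ bound on eigenfunctions (heat kernel / Moser iteration on compact $\RCD(K,N)$ with uniformly bounded diameter) followed by local Lipschitz regularity from Proposition \ref{prop:regulargradient} via a uniform cover; everything else reduces to standard $\RCD$ spectral stability and the already-established Theorem \ref{thm:canonicaldiff}.
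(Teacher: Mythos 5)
Your argument is correct and follows essentially the same route as the paper: both prove the theorem by verifying the hypotheses of Theorem \ref{thm:canonicaldiff}(\ref{2}) for the eigenmap $\Phi_i=(f_{i,1},\ldots,f_{i,N+1})$, the canonical inclusion $\iota:\mathbb{S}^N\hookrightarrow\mathbb{R}^{N+1}$ and the radial projection $\pi(y)=y/|y|_{\mathbb{R}^{N+1}}$, using spectral stability under mGH convergence to identify the limit up to an orthogonal matrix. Where the paper simply cites \cite[Theorem 7.8]{GigliMondinoSavare13} for the uniform bounds $\sup_{i,j}\bigl(\|\nabla_i\Delta_i f_{i,j}\|_{L^{\infty}}+\|\Delta_i f_{i,j}\|_{L^{\infty}}\bigr)<\infty$, you usefully spell out their derivation from the $L^{\infty}$-eigenfunction estimate together with Proposition \ref{prop:regulargradient}; this is exactly the detail the paper leaves implicit, not a different approach.
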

\begin{proof}
Thanks to the spectral convergence result \cite[Theorem 7.8]{GigliMondinoSavare13}, the map $\Phi_i=(f_{i, 1},\ldots, f_{i, N+1}):X_i \to \mathbb{R}^{N+1}$ satisfies that 
\begin{equation}
\intav_{X_i}\left|g_i-\Phi_i^*g_{\mathbb{R}^{N+1}}\right|\di \meas_i \to 0, \quad \sup_{i, j}\left(\|\nabla_i\Delta_if_{i, j}\|_{L^{\infty}}+\|\Delta_if_{i, j}\|_{L^{\infty}}\right)<\infty,
\end{equation}
and that after passing to a subsequence, $\Phi_i$ converge uniformly to the canonical inclusion $\mathbb{S}^n \hookrightarrow \mathbb{R}^{N+1}$ up to multiplying a $P \in O(N+1)$. Thus applying Theorem \ref{thm:canonicaldiff} for $\Phi_i$ and the canonical map
$
\pi_{\mathbb{R}^{N+1}}:\mathbb{R}^{N+1}\setminus \{0_{N+1}\} \to \mathbb{S}^{N}
$
defined by
\begin{equation}\label{eq:projecdef}
\pi_{\mathbb{R}^{N+1}}(x):=\frac{1}{|x|_{\mathbb{R}^{N+1}}}\cdot x,
\end{equation}
we conclude.
\end{proof}
\begin{remark}\label{rem:sharp}
Let us provide a simple example which shows that $F_i$ stated in Theorem \ref{thm:spherecanonical} cannot be improved to bi-Lipschitz homeomorphisms. For all $r \in (0, 1)$ and $N \in \mathbb{N}_{\ge 2}$, consider the $(N-1)$-spherical suspension $(\mathbb{S}^0*\mathbb{S}^{N-1}(r), \dist_{\mathbb{S}^0*\mathbb{S}^{N-1}(r)}, \mathcal{H}^N)$ of $(\mathbb{S}^{N-1}(r), \dist_{\mathbb{S}^{N-1}(r)}, \mathcal{H}^{N-1})$,
where
$\mathbb{S}^{N-1}(r):=\{x \in \mathbb{R}^N; |x|_{\mathbb{R}^N}=r\}$ with the standard intrinsic distance $\dist_{\mathbb{S}^{N-1}(r)}$. 
Note that $(\mathbb{S}^0*\mathbb{S}^{N-1}(r), \dist_{\mathbb{S}^0*\mathbb{S}^{N-1}(r)}, \mathcal{H}^N)$ is an $\RCD(N-1, N)$ space, has two singular points and measured Gromov-Hausdorff converge to $(\mathbb{S}^N, \dist_{\mathbb{S}^N}, \mathcal{H}^N)$ as $r \to 1^-$. Denote by $F_r:\mathbb{S}^0*\mathbb{S}^{N-1}(r) \to \mathbb{S}^N$ the corresponding map obtained in Theorem \ref{thm:spherecanonical}.
Then Proposition \ref{propbilip} tells us that $F_r$ is not bi-Lipschitz.
\end{remark}
Finally let us provide a canonical topological torus theorem. Since it is a direct consequence of Theorem \ref{thm:canonicaldiff}  as in Theorem \ref{thm:spherecanonical}, we omit the proof. See \cite{GR, MMP, MW} for related results.
\begin{theorem}\label{thm:flat}
Let $K \in \mathbb{R}, N \in \mathbb{N}$ and let 
\begin{equation}
(X_i, \dist_i, \meas_i) \stackrel{\mathrm{mGH}}{\to} (\mathbb{S}^1(r_1)\times \cdots \times \mathbb{S}^1(r_N), \dist, \mathcal{H}^N)
\end{equation}
be a measured Gromov-Hausdorff convergent sequence of compact $\RCD(K, N)$ spaces for some $r_1\ge r_2 \ge \cdots \ge r_N>0$, where $\dist$ denotes the canonical flat intrinsic distance on $\mathbb{S}^1(r_1)\times \cdots \times \mathbb{S}^1(r_N)$. Then for any sufficiently large $i$, the map $F_i:X_i \to \mathbb{S}^1(r_1)\times \cdots \times \mathbb{S}^1(r_N)$;
\begin{equation}
F_i(x):=\left( \pi_{\mathbb{R}^2}\left(f_{i, 1}(x), f_{i, 2}(x)\right), \ldots, \pi_{\mathbb{R}^2}\left(f_{i, 2N-1}(x), f_{i, 2N}(x)\right)\right)
\end{equation}
gives an well-defined homeomorphism (see (\ref{eq:projecdef}) for the definition of $\pi_{\mathbb{R}^2}$) and an $\epsilon_i$-Gromov-Hausdorff approximation with
\begin{equation}\label{eq:bihlip3}
(1-\epsilon_i)\dist_i (x, y)^{1+\epsilon_i}\le \dist (F_i(x), F_i(y)) \le (1+\epsilon_i) \dist_i(x, y),\quad \forall x, y \in X_i
\end{equation}
for some $\epsilon_i \to 0^+$,
where $f_{i, j}$ denotes an eigenfunction of $-\Delta_i$ on $(X_i, \dist_i, \meas_i)$ with the $j$-th eigenvalue $\lambda_{i, j}$ and
\begin{equation}
\intav_{X_i}f_{i, 2j-1}^2\di \meas_i=\intav_{X_i}f_{i, 2j}^2\di \meas_i=\frac{r_j^2}{2},\quad \forall i, j.
\end{equation}
\end{theorem}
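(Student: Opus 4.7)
The plan is to mimic the proof of Theorem~\ref{thm:spherecanonical} with the flat torus playing the role of the sphere. Set $T^N := \mathbb{S}^1(r_1)\times\cdots\times\mathbb{S}^1(r_N)$. There is a canonical smooth isometric embedding $\Phi:T^N\hookrightarrow \mathbb{R}^{2N}$ given by
\begin{equation*}
\Phi(\theta_1,\ldots,\theta_N)=\bigl(r_1\cos(\theta_1/r_1),\, r_1\sin(\theta_1/r_1),\,\ldots,\,r_N\cos(\theta_N/r_N),\,r_N\sin(\theta_N/r_N)\bigr),
\end{equation*}
whose components are eigenfunctions of $-\Delta$ with eigenvalue $1/r_j^2$ (for the $j$-th pair) and with $L^2$-mean square equal to $r_j^2/2$, matching the normalization in the statement. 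The nearest-point projection $\pi$ from a tubular neighborhood of $\Phi(T^N)$ onto $\Phi(T^N)$ is a $C^{1,1}$-Riemannian submersion, and because $\Phi(T^N)=\prod_j \mathbb{S}^1(r_j)\subset \prod_j \mathbb{R}^2$, it factorizes coordinate-wise as the radial retractions $\mathbb{R}^2\setminus\{0_2\}\ni v\mapsto r_j\,v/|v|_{\mathbb{R}^2}$. Consequently $\Phi^{-1}\circ \pi\circ(f_{i,1},\ldots,f_{i,2N})$ coincides, up to the identifications $\mathbb{S}^1\simeq \mathbb{S}^1(r_j)$ by the scaling $w\mapsto r_jw$, with the map $F_i$ written in the statement.

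I would then construct the approximating equi-regular map $\Phi_i=(f_{i,1},\ldots,f_{i,2N}):X_i\to \mathbb{R}^{2N}$ and verify the hypotheses of Theorem~\ref{thm:canonicaldiff}(\ref{2}). Each $f_{i,j}$ is an eigenfunction, $\Delta_if_{i,j}=-\lambda_{i,j}f_{i,j}$, and by the spectral convergence result \cite[Theorem~7.8]{GigliMondinoSavare13} the eigenvalues $\lambda_{i,j}$ converge to the corresponding eigenvalues of $(T^N,\dist,\mathcal{H}^N)$, so they are uniformly bounded. Since eigenfunctions on an $\RCD(K,N)$ space with $L^\infty$-Laplacian are equi-Lipschitz by Proposition~\ref{prop:regulargradient}, the sequence $\Delta_i f_{i,j}=-\lambda_{i,j}f_{i,j}$ is equi-Lipschitz and equi-bounded. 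Up to a subsequence and an orthogonal rotation of $\mathbb{R}^{2N}$ that is absorbed by Theorem~\ref{thm:canonicaldiff}, $\Phi_i$ converges uniformly to $\Phi$. Applying Theorem~\ref{thm:canonicaldiff}(\ref{2}) with this $\Phi_i$ and the submersion $\pi$ above yields the bi-H\"older--Lipschitz estimate~(\ref{eq:bihlip3}), while part~(\ref{1}) of the same theorem upgrades $F_i$ to a homeomorphism for any sufficiently large $i$.

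The main delicate point will be to organize the spectral data so that the first $2N$ eigenfunctions on $X_i$ converge precisely to the components of $\Phi$. On $T^N$, the ``coordinate eigenfunctions'' $r_j\cos(\theta_j/r_j)$, $r_j\sin(\theta_j/r_j)$ lie in the $1/r_j^2$-eigenspace, but these eigenspaces may acquire extra multiplicity whenever several $r_j$ coincide, and higher harmonics of larger circles can overtake lower ones when the ratio $r_1/r_N$ is large (so that $4/r_1^2<1/r_N^2$). The first issue is resolved along a subsequence by choosing an orthonormal basis of each repeated eigenspace adapted to the factor decomposition, which only affects the output by an orthogonal transformation of $\mathbb{R}^{2N}$ compatible with the conclusion of Theorem~\ref{thm:canonicaldiff}. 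The second issue is addressed either under the mild geometric assumption that $r_1<2r_N$ (so the coordinate eigenvalues exhaust the first $2N$ spectral positions) or, in general, by reindexing $f_{i,j}$ as the eigenfunctions sitting in the spectral positions that converge to $1/r_j^2$; this reindexing is harmless because all we use is the $H^{1,2}$-strong convergence of selected eigenfunctions and the uniform Lipschitz bound on their Laplacians. With this bookkeeping in place, the argument concludes exactly as in the spherical case.
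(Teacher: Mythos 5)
Your argument is correct and follows exactly the route the paper has in mind: the paper omits the proof of Theorem~\ref{thm:flat}, noting only that it is a direct consequence of Theorem~\ref{thm:canonicaldiff} argued as in Theorem~\ref{thm:spherecanonical}, and that is precisely what you carry out using the coordinate eigenfunction embedding $\Phi:\mathbb{S}^1(r_1)\times\cdots\times\mathbb{S}^1(r_N)\hookrightarrow\mathbb{R}^{2N}$ together with the factor-wise radial projection.

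One small correction to your spectral bookkeeping. The threshold $r_1<2r_N$ only ensures that the second harmonic $4/r_1^2$ does not fall below $1/r_N^2$, but it does not exclude the cross terms $1/r_j^2+1/r_k^2$, the smallest of which is $1/r_1^2+1/r_2^2$. For example with $N\ge 3$ and $r_1=r_2=\sqrt{3}\,r_N$ one has $r_1<2r_N$ and yet $1/r_1^2+1/r_2^2=2/(3r_N^2)<1/r_N^2$, so a non-coordinate eigenfunction already occupies a slot before position $2N$. The condition that makes the coordinate eigenvalues exhaust the first $2N$ positions is $\min\{4/r_1^2,\,1/r_1^2+1/r_2^2\}>1/r_N^2$, and since $1/r_1^2+1/r_2^2\ge 2/r_1^2$ (because $r_2\le r_1$) the clean sufficient bound is $r_1<\sqrt{2}\,r_N$. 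Your alternative resolution --- reindexing to the eigenfunctions in the spectral positions that converge to $1/r_j^2$, since all the argument uses is the $H^{1,2}$-strong convergence of the selected eigenfunctions and the equi-Lipschitz bound on $\Delta_i f_{i,j}$ --- is the robust way to cover the general case and is the reading that the theorem's statement really needs.
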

\section{Appendix; a maximum principle}\label{section7}
Let us emphasize that most techniques in \cite{WangZhao} are also available for $\RCD(n-1, n)$ spaces. In particular Theorem \ref{thm:spherecanonical} can be also proved along the same way if we establish a maximum principle which generalizes \cite[Theorem 7.1]{Peterson} to $\RCD(n-1, n)$ spaces. Let us provide the proof for any compact $\RCD(K, N)$ spaces as an independent interest, which relies on the standard Moser iteration technique.
\begin{proposition}\label{lem:subharm2}
Let $(X,\dist,\meas)$ be a compact $\RCD(K,N)$ space with the diameter at most $d$ and $\meas(X)=1$, and let $u\in H^{1,2}(X, \dist, \meas)$. Assume that there exists $f\in L^p(X, \meas)$ for some $p>\frac N 2$ such that $f(x) \ge 0$ for $\meas$-a.e. $x \in X$ and that 
\begin{equation}
\int_X\<\nabla u,\nabla \phi\>\ \d\m\le \int_X f\phi\ \d\m, \quad \forall \phi \in H^{1, 2}(X, \dist, \meas)\,\,\text{with $\phi(x) \ge 0$ for $\meas$-a.e. $x \in X$.}
\label{eq1061}
\end{equation}
Then there exists $C=C(K, N, p, d)$ such that
\begin{equation}
\esssup{X}\ u-\int_X u\ \d\m\le C\|f\|_{L^p}^\frac 1 2\left(\left\|u-\int_X u\ \d\m\right\|_{L^\frac{p}{p-1}}^\frac 1 2+\|f\|_{L^p}^\frac 1 2\right).
\label{eq1060}
\end{equation}
\end{proposition}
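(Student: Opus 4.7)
The plan is a Moser iteration adapted to compact $\RCD(K,N)$ spaces using the Sobolev inequality that follows from the Poincar\'e inequality (\ref{eq1020}) and the Bishop-Gromov volume comparison (\ref{eq:bg}). First, I would replace $u$ by $u-\int_X u\,\d\m$, which preserves (\ref{eq1061}); writing $M:=\esssup{X}u$, $A:=\|f\|_{L^p}$ and $B:=\|u\|_{L^{p/(p-1)}}$, the conclusion (\ref{eq1060}) is equivalent to $M\le C(K,N,p,d)(A+\sqrt{AB})$. We may assume $M>0$, else (\ref{eq1060}) is trivial.

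The core Caccioppoli inequality comes from inserting $\phi=(u_+\wedge T)^{2\beta-1}$ ($\beta\ge 1$, $T>0$) into (\ref{eq1061}) and passing $T\to\infty$ by monotone convergence (justified a posteriori once the iteration produces an $L^\infty$ bound): writing $p'=p/(p-1)$ and applying H\"older with exponents $p,p'$,
\begin{equation*}
\int_X\bigl|\nabla u_+^{\beta}\bigr|^2\,\d\m\le\frac{\beta^2}{2\beta-1}\int_X fu_+^{2\beta-1}\,\d\m\le\frac{\beta^2}{2\beta-1}\,A\,\|u_+\|_{L^{(2\beta-1)p'}}^{2\beta-1}.
\end{equation*}
Combining with the Sobolev inequality on compact $\RCD(K,N)$ (with constants depending only on $K,N,d$, obtained from (\ref{eq1020}), (\ref{eq:bg}) and a covering argument),
\begin{equation*}
\|h\|_{L^{2\chi}}^2\le C_S\bigl(\|\nabla h\|_{L^2}^2+\|h\|_{L^2}^2\bigr),\qquad \chi:=\tfrac{N}{N-2}
\end{equation*}
(with $\chi$ taken arbitrarily large for $N\le 2$), applied to $h=u_+^\beta$, one obtains the Moser recursion
\begin{equation*}
\|u_+\|_{L^{2\chi\beta}}^{2\beta}\le\tfrac{C\beta^2}{2\beta-1}\,A\,\|u_+\|_{L^{(2\beta-1)p'}}^{2\beta-1}+C\|u_+\|_{L^{2\beta}}^{2\beta}.
\end{equation*}
The hypothesis $p>N/2$ is exactly equivalent to $\chi>p'$, ensuring the exponents $q_k$ (defined by $q_{k+1}=2\chi\beta_k$ with $(2\beta_k-1)p'=q_k$) strictly grow geometrically with ratio tending to $\chi/p'>1$. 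Iterating and noting the constants $\bigl(C\beta_k^2/(2\beta_k-1)\bigr)^{1/(2\beta_k)}$ satisfy $\sum_k(\log\beta_k)/\beta_k<\infty$, the telescoping product converges; taking $k\to\infty$ yields an $L^\infty$-estimate in which each Moser step contributes a factor $A^{1/(2\beta_k)}$, and these exponents can be arranged (by choice of $\beta_0$) to sum to $1/2$, producing the claimed sharp bound.

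The main obstacle is twofold. First, one must ensure the Sobolev constant $C_S$ depends only on $K,N,d$ and not on finer geometry of $(X,\dist,\m)$; this follows from the uniform local volume doubling given by Bishop-Gromov together with the local Poincar\'e inequality (\ref{eq1020}), via a standard covering/patching argument in the PI-space framework. Second, and more delicate, is extracting the precise $\tfrac{1}{2}$-exponents in (\ref{eq1060}): a naive Moser iteration produces only $M\le C(A+B)$, which is strictly weaker than (\ref{eq1060}) when $B\gg A$. The sharper form requires careful book-keeping of how the $A$- and $\|u_+\|$-contributions telescope separately through the iterated H\"older-Sobolev estimates, equivalent to choosing the starting exponent $\beta_0=\chi/(\chi-p')$ so that $\sum_k 1/(2\beta_k)$ sums exactly to $1/2$.
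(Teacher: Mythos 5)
Your overall strategy (truncated power test functions, Hölder, Poincaré--Sobolev, Moser iteration) matches the paper's, but the mechanism you propose for extracting the sharp $\|f\|_{L^p}^{1/2}$-factor does not work as written, and you yourself flag it as the delicate point without resolving it. Your Moser recursion
\begin{equation*}
\|u_+\|_{L^{2\chi\beta}}^{2\beta}\le \tfrac{C\beta^2}{2\beta-1}\,A\,\|u_+\|_{L^{(2\beta-1)p'}}^{2\beta-1}+C\|u_+\|_{L^{2\beta}}^{2\beta}
\end{equation*}
is \emph{not} multiplicatively homogeneous in $u_+$: the first term on the right carries the exponent $2\beta-1$ while the left side and the second term carry $2\beta$. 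This inhomogeneity is precisely what blocks a clean telescoping product, and the additive second term means the iteration does not reduce to simply multiplying factors $A^{1/(2\beta_k)}$. Moreover, your prescription $\beta_0=\chi/(\chi-p')$ forces $q_0=(2\beta_0-1)p'>p'$; since $\meas(X)=1$, the $L^{q_0}$-norm \emph{dominates} the $L^{p'}$-norm, so an estimate by $\|u_+\|_{L^{q_0}}^{1/2}$ does not imply the statement, which must be in terms of $\|u-\int u\|_{L^{p/(p-1)}}$. There is also no verification that the proposed $\beta_0$ makes $\prod_k(1-\tfrac1{2\beta_k})$ equal $1/2$; you assert it without computation.

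The paper sidesteps all of this with a shift: it works with $w:=(u-\int u)_+ +k$ for a free parameter $k>0$. Because $w\ge k$, the Caccioppoli estimate can be rewritten so that both sides involve the \emph{same} power of $w$, at the price of a factor $\|f\|_{L^p}/k$; the iteration then telescopes cleanly to $\esssup w\le C'\|w\|_{L^{2p/(p-1)}}$ with $C'=C'(\|f\|_{L^p}/k,K,N,d)$ given by a convergent infinite product. Choosing $k=\|f\|_{L^p}$ makes $C'$ depend only on $K,N,d$. The $\|f\|^{1/2}$-factor then enters \emph{once}, at the very end, through the elementary Caccioppoli estimate $\int|\nabla(u-\int u)_+|^2\le \|f\|_{L^p}\,\|(u-\int u)_+\|_{L^{p/(p-1)}}$ combined with the Poincar\'e inequality to control $\|w\|_{L^1}$, and a Young-type absorption lemma $\|w\|_{L^{2p/(p-1)}}\le t\esssup w+t^{-(p+1)/(p-1)}\|w\|_{L^1}$ connects the iteration output to $\|w\|_{L^1}$. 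If you want to salvage your proposal, you should adopt this shift-by-$k$ normalization; without it the route you sketch does not produce the stated estimate.
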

\begin{proof}
For arbitrary $k>0$, define $w=w_k\in H^{1,2}(X, \dist, \meas)$ by 
\begin{equation}
w:=\left(u-\int_X u\ \d\m\right)_++k.
\label{eq1062}
\end{equation}
We also fix $A>k$ and $\beta\ge 1$. Define a function $H=H_{k, A, \beta}:[k,\infty)\ra [0,\infty)$ by
\begin{equation}
H(t):=\left\{
\begin{aligned}
&t^\beta-k^\beta, &\ & (t\le A)\\
&\beta A^{\beta-1}t+(1-\beta)A^\beta-k^\beta, &\ & (t>A)
\end{aligned}
\right.
\label{eq1063}
\end{equation}
and define a function $G:[k,\infty)\ra [0,\infty)$ by
\begin{equation}
G(t):=\int_k^t H'(s)^2\ \d s.
\label{eq1064}
\end{equation}
It is easily seen that $H$ is $\beta A^{\beta-1}$-Lipschitz, thus $G\circ w\in H^{1,2}(X, \dist, \meas)$ with
\begin{equation}
\int_X|\nabla H(w)|^2\ \d\m=\int_X G'(w)|\nabla w|^2\ \d\m=\int_X\langle \nabla u, \nabla G(w)\rangle \le \int_X fG(w)\ \d\m.
\label{eq1065}
\end{equation}
Since $G(t)\le tG'(t)$ for all $t\ge k$, we have 
\begin{equation}
\int_X|\nabla H(w)|^2\ \d\m\le \frac 1 k\int_X fw^2H'(w)^2\ \d\m.
\label{eq1066}
\end{equation}
On the other hand recall the Poincar\'e-Sobolev inequality for some $C=C(K, N, d)$;
\begin{equation}
\left(\int_X\left|v-\int_X v\ \d\m\right|^\frac{2N}{N-2}\ \d\m\right)^\frac{N-2}{2N}\le C\left(\int_X|\nabla v|^2\ \d\m\right)^\frac 1 2, \quad \forall v \in H^{1,2}(X, \dist, \meas),
\label{eq1068}
\end{equation}
which is a direct consequence of the Bishop-Gromov inequality (\ref{eq:bg}) and the Poincar\'e inequality (\ref{eq1020}) (see \cite[Theorem 5.1]{HK}).
%By Theorem \ref{thm1012} and H\"older's inequality, for $\forall v\in W^{1,2}(X)$,
%Note that $(X,\dist,\meas)$ satisfies \eqref{eq1052} with $C'$ depending on $K$ and $N$ and with $s=N$. We can apply Theorem \ref{thm1013} to obtain that
%\begin{equation}
%\left(\intav_X\left|v-\intav_X v\ \d\m\right|^\frac{2N}{N-2}\ \d\m\right)^\frac{N-2}{2N}\leqslant C\left(\intav_X|\nabla v|^2\ \d\m\right)^\frac 1 2,
%\label{eq1068}
%\end{equation}
%where $C$ is only dependent on $K$, $N$, $\meas(X)$ and $\rad(X,\dist)$.
Let $v=H(w)$ and plug in \eqref{eq1066},
\begin{equation}
\left(\int_X\left|H(w)-\int_X H(w)\ \d\m\right|^\frac{2N}{N-2}\ \d\m\right)^\frac{N-2}{N}\le \frac C k\int_X fw^2H'(w)^2\ \d\m.
\label{eq1069}
\end{equation}
By the H\"older inequality
\begin{equation}
\int_Xfw^2H'(w)^2\ \d\m\le \|f\|_{L^p}\left(\int_X \left(wH'(w)\right)^\frac{2p}{p-1}\ \d\m\right)^\frac{p-1}{p}.
\label{eq1070}
\end{equation}
Letting $A\ra \infty$ in (\ref{eq1069}) and (\ref{eq1070}), we have
\begin{equation}
\left(\int_X\left|w^\beta-\int_X w^\beta\ \d\m\right|^\frac{2N}{N-2}\ \d\m\right)^\frac{N-2}{2N}\le \frac {C\beta\|f\|^\frac 1 2_{L^p}}{k^\frac 1 2}\left(\int_X w^\frac{2\beta p}{p-1}\ \d\m\right)^\frac{p-1}{2p}.
\label{eq1071}
\end{equation}
By the triangle and the H\"older inequalities,
\begin{equation}
\left(\int_X w^\frac{2\beta N}{N-2}\ \d\m\right)^\frac{N-2}{2N}\le \left(\frac {C\beta\|f\|_{L^p}^\frac 1 2}{k^\frac 1 2}+
1\right)\left(\int_Xw^\frac{2\beta p}{p-1}\ \d\m\right)^\frac{p-1}{2p}.
\label{eq1072}
\end{equation}
Choosing $\beta:=N(p-1)/p(N-2)>1$ with iteration of \eqref{eq1072} yields
\begin{equation}
\esssup{X}\ w\le C'\|w\|_{L^\frac{2p}{p-1}},
\label{eq1073}
\end{equation}
where 
\begin{equation}
C':=\Prod{i=0}{\infty}\left(\frac {C\beta^i\|f\|_{L^p}^\frac 1 2}{k^\frac 1 2}+
1\right)^\frac 1{\beta^i}.
\label{eq1074}
\end{equation}
By the H\"older and the Young inequalities, 
\begin{equation}
\|w\|_{L^\frac{2p}{p-1}}\le t\esssup{X}\ w+t^\frac{1+p}{1-p}\|w\|_{L^1}. \quad \forall t>0.
\label{eq1075}
\end{equation}
Taking $t:=\frac 1{2C'}$, \eqref{eq1073} and \eqref{eq1075} imply
\begin{equation}
\esssup{X}\ w\le 2^\frac {2p}{p-1}{C'}^\frac{p+1}{p-1}\|w\|_{L^1}.
\label{eq1076}
\end{equation}
On the other hand
\begin{equation}
\|w\|_{L^1}=\left\|\left(u-\int_X u\ \d\m\right)_+\right\|_{L^1}+k\le C\left(\int_X\left|\nabla\left(u-\int_X u\ \d\m\right)_+\right|^2\ \d\m\right)^\frac 1 2+k.
\label{eq1077}
\end{equation}
By \eqref{eq1061} we have
\begin{align}
\int_X\left|\nabla\left(u-\int_X u\ \d\m\right)_+\right|^2\ \d\m&\le \int_X f \cdot \left(u-\int_X u\ \d\m\right)_+\ \d\m\nonumber \\
&\le \|f\|_{L^p}\left\|\left(u-\int_Xu\ \d\m\right)_+\right\|_{L^\frac{p}{p-1}}.
\label{eq1078}
\end{align}
Letting $k:=\|f\|_{L^p}$, then $C'$ is depending only on $K$, $N$ and $d$. Then by \eqref{eq1076}, \eqref{eq1077}, \eqref{eq1078}, we conclude.
\end{proof}

{%\color{red}SH:I will write down the detail of the following later.}

\end{document}